\newtheorem{proposition}{Proposition}[section]
\newtheorem{theorem}[proposition]{Theorem}
\newtheorem{corollary}[proposition]{Corollary}
\newtheorem{lemma}[proposition]{Lemma}
\newtheorem{remark}[proposition]{Remark}
\newtheorem*{theorem*}{Theorem}
\newtheorem*{proposition*}{Proposition}
\newtheorem*{lemma*}{Lemma}
\newtheorem*{corollary*}{Corollary}
\newtheorem*{rep@theorem}{\rep@title}
\newcommand{\newreptheorem}[2]{
\newenvironment{rep#1}[1]{
 \def\rep@title{#2 \ref{##1}}
 \begin{rep@theorem}}
 {\end{rep@theorem}}}
\newtheorem*{rep@proposition}{\rep@title}
\newcommand{\newrepproposition}[2]{
\newenvironment{rep#1}[1]{
 \def\rep@title{#2 \ref{##1}}
 \begin{rep@proposition}}
 {\end{rep@proposition}}}
\theoremstyle{definition}
\newtheorem{definition}[proposition]{Definition}
\newtheorem{question}[proposition]{Question}
\newcommand{\bdry}{\partial}
\renewcommand{\Cup}{\bigcup}
\newcommand{\sm}{\smallsetminus}
\newcommand{\Q}{\mathbb{Q}}
\newcommand{\N}{\mathbb{N}}
\newcommand{\Z}{\mathbb{Z}}
\newcommand{\F}{\mathcal{F}}
\newcommand{\B}{\mathcal{B}}
\newcommand{\C}{\mathcal{C}}
\newcommand{\A}{\mathcal{A}}
\newcommand{\M}{\mathcal{M}}
\newcommand{\im}{\operatorname{Im}}
\newcommand{\lk}{\operatorname{lk}}
\newcommand{\Id}{\operatorname{Id}}
\newcommand{\Arf}{\operatorname{Arf}}
\newcommand{\Bl}{\mathcal{B}\ell}
\newcommand{\double}{\operatorname{Z}}
\newcommand{\interior}{\operatorname{int}}
\newcommand{\into}{\hookrightarrow}
\newcommand{\immerse}{\looparrowright}
\newcommand{\pref}[1]{(\ref{#1})}
\newcommand{\Oplus}{\bigoplus}
\begin{document}
\title[Concordance to boundary links and the solvable filtration]{Concordance to boundary links and the solvable filtration}

\author{Christopher W.\ Davis}
\address{Department of Mathematics, University of Wisconsin--Eau Claire}
\email{daviscw@uwec.edu}

\author{Shelly Harvey $^{\dag}$}
\address{Department of Mathematics, Rice University}
\email{shelly@rice.edu}

\author{JungHwan Park}
\address{Department of Mathematical Sciences, Korea Advanced Institute for Science and Technology}
\email{jungpark0817@kaist.ac.kr}

\thanks{$^{\dag}$ This work was partially supported National Science Foundation grants DMS-2109308 and DMS-1613279.}

\date{\today}

%\subjclass[2000]{57M25}
\def\subjclassname{\textup{2020} Mathematics Subject Classification}
\expandafter\let\csname subjclassname@1991\endcsname=\subjclassname
%\expandafter\let\csname subjclassname@2000\endcsname=\subjclassname
\subjclass{57K10}

\begin{abstract} 
A geometric interpretation of the vanishing of Milnor's higher order linking numbers remains an important open problem in the study of link concordance.  In the 1990's, works of Cochran-Orr and Livingston exhibit a potential resolution to this problem in the form of homology boundary links.  They exhibit the first known links with vanishing Milnor's invariants that are not concordant to boundary links.  It remains unknown whether every link with vanishing Milnor's invariants is concordant to a homology boundary link.  In this paper we present an obstruction to concordance to a homology boundary link and a potential path to the construction of links with vanishing Milnor's invariants which are not concordant to a homology boundary link.  Our obstructions and examples fit into the language of the solvable filtration due to Cochran-Orr-Teichner.  Along the way we demonstrate that every homology boundary link is equivalent to a boundary link modulo the 0.5 term of the solvable filtration, in contrast to the results of Cochran-Orr and Livingston.  Finally we exhibit highly solvable homology boundary links that are not concordant to boundary links, pushing the work of Cochran-Orr and Livingston deep into the solvable filtration.

\end{abstract}

\maketitle
%==============================================================
\section{Introduction}
% (NewShelly) 
Knots and links in $S^3$ play an essential role in the classification of 3- and 4-dimensional manifolds.  While $3$-dimensional manifolds are well understood using Thurston's Geometrization program, $4$-dimensions (in both the smooth and topological categories) is still elusive. Arguably, the most important equivalence relation for the study of $4$-dimensional manifolds is knot and link concordance.  For example, if $K$ is a knot in $S^3$ that is topologically (locally flat) slice but not smoothly slice then one can use the trace of $K$ to create an exotic $\mathbb{R}^4$, a smooth $4$-manifold that is homeomorphic but not diffeomorphic to $\mathbb{R}^4$ with the usual smooth structure (see Exercise $9.4.23$ and its solution on $p. 522$ in \cite{KirbyCalculus}).  The existence of such an exotic structure was pointed out by Freedman in 1982, using his work on topological $4$-manifolds \cite{Freedman1} and Donaldson's theorem on the intersection forms of smooth $4$-manifolds \cite{Donaldson}.  Strangely, this is the only dimension in which this happens; that is, if $\mathbb{R}^n$ has an exotic structure then $n=4$.  

While the set of knots up to concordance has seen much progress (both topologically and smoothly) over the last 10 years, studying links up to concordance proves to be much more difficult. Because of this, many others have studied the 
category of links most closely resembling knots, namely boundary links. A link $L$ in $S^3$ is called a \emph{boundary link} if the components of $L$ bound disjoint Seifert surfaces.  Given such a collection of disjoint Seifert surfaces one can define invariants via a Seifert matrix as one does for a knot. See for example~\cite{KiHyoungKo1, KiHyoungKo2,Sheiham03}.
However, because of the elusive boundary concordance versus link concordance for boundary links,  studying even these classical  invariants is difficult.  The main reason is that even if a boundary link is slice (smoothly or topologically), it is not known whether its Seifert form must admit a strong algebraic metabolizer. However, if your link is boundary slice (a conjecturally much more restrictive condition), then its Seifert form admits a strong metabolizer.  As a consequence, many invariants from the Seifert matrix for $L$ are boundary concordance invariants.  

These tools give a potential avenue to understanding boundary links up to concordance.  It is natural to now ask if every concordance class of a link is represented by a boundary link.

The first obstructions to a link being concordant to a boundary link are Milnor's invariants.  For any $L$ with $m$ components, there is a map from the free group of rank $m$, $F$ to $\pi_1(S^3 \smallsetminus L)$ sending the generators to the meridians.  When $L$ is a boundary link, there is a surjective map to $F$ that is a retract modulo the terms of the lower central series.  The latter map sends the longitudes of $L$ to the trivial element in $F$ so $L$ has vanishing Milnor's invariants.  
As Milnor's invariants are preserved by concordance~\cite{Ca}, any link concordant to a boundary link has vanishing Milnor's invariants.  In groundbreaking works, Cochran-Orr~\cite{Cochran-Orr:1990-1, Cochran-Orr:1993-1}, and subsequently Livingston~\cite{Livingston:1990-1}, demonstrate the existence of links which are not concordant to boundary links even though their Milnor's invariants vanish.  These links sit in a larger class of links called homology boundary links.  It is not known if every link with vanishing Milnor's invariants is concordant to a homology boundary link.  It is also unknown whether a sublink of a homology boundary link must be concordant a homology boundary link. Summarizing the current state of the art, 
\begin{equation}\label{eqn: comparison}
\mathcal{BL}\subsetneq \mathcal{HBL} \subseteq \mathcal{SHBL}\subseteq \mathcal{M}.
\end{equation}
The symbol $\subseteq$ is used to emphasize that the reverse containment is unknown, $\mathcal{BL}$ is the set of links concordant to boundary links, $\mathcal{HBL}$ is the set of links concordant to homology boundary links, $\mathcal{SHBL}$ is the set of links concordant to sublinks of homology boundary links, and $\mathcal{M}$ is the set of links with vanishing Milnor's invariants.

Let $\mathcal{C}$ be the set of links up to concordance. In \cite{COT03}, Cochran-Orr-Teichner define a filtration of $\mathcal{C}$ by subsets, called the \emph{solvable filtration}, denoted as follows 
$$\dots \subseteq \F_{n+1}\subseteq \F_{n.5}\subseteq \F_{n}\subseteq \dots \subseteq \F_{0.5}\subseteq \F_0\subseteq \C.$$ Links in $\F_n$ are called $n$-solvable. In \cite{Davis:2020-1}, the first named author defines a sequence of increasingly strong equivalence relations on $\C$ called {$n$-solve-equivalence} denoted $\simeq_n$ which extends the solvable filtration in that the $n$-solve-equivalence class of the $m$-component unlink is precisely the set of $m$-component $n$-solvable links.  

As a consequence of Martin's classification of links up to 0-solvability \cite[Theorem 1]{Martin22} if a link has vanishing Milnor's invariants of length up to 4 then that link is $0$-solve-equivalent to a boundary link.  Otto \cite[Theorem 3.1]{Otto14} proves that if a link is in $\F_n$ with $n\in \N$ then Milnor's invariants of length up to $2^{n+2}-1$ vanish.  While we do not do so here, her proof generalizes to show that the same conclusion follows if a link is $n$-solve-equivalent to a sublink of a homology boundary link.  Otto goes on to show that that there exist $n$-solvable links with nonvanishing Milnor's invariants of length $2^{n+2}$, so that Milnor's invariants of length $2^{n+2}$ are not invariants of $n$-solve-equivalence.   This suggests that $n$-solve-equivalence to a sublink of a homology boundary link might be detected by a finite collection of Milnor's invariants.  We prove that this is not the case, even for $n=0.5$. 
  
 \begin{theorem}\label{thm: n=0}
For each positive integer $k$, there exists a 0-solvable link $L$ with slice components such that 
\begin{enumerate}[font=\upshape]
\item\label{item:vanishingMuFor0.5} $\mu_I(L) = 0$ for each multi-index $I$ with $|I|< k$, and 
\item $L$ is not $0.5$-solve-equivalent to any sublink of a homology boundary link.
\end{enumerate}\end{theorem}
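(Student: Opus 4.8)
The plan is to realize $L$ as an infection (satellite operation) on a carefully chosen seed link and to obstruct $0.5$-solve-equivalence to a sublink of a homology boundary link using a von Neumann $\rho$-invariant attached to a metabelian representation --- equivalently, the metabolicity of a localized first-order Blanchfield pairing --- in the spirit of the Casson--Gordon invariants used by Livingston and the higher-order signatures of Cochran--Orr--Teichner. The argument rests on an obstruction theorem, which I regard as established earlier in the paper: if a link $L'$ is $0.5$-solve-equivalent to a sublink of a homology boundary link, then the relevant $\rho$-invariants of the zero-surgery $M_{L'}$, namely those arising from representations compatible with the epimorphism onto a free group underlying the homology-boundary structure, are constrained --- each such $\rho$-invariant lies in a metabolizer and hence vanishes. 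Granting this, it suffices to build, for every $k$, a $0$-solvable link $L$ with slice components and $\mu_I(L)=0$ for $|I|<k$ which nevertheless carries a $\rho$-invariant of the forbidden kind that is nonzero for every admissible representation.

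\textbf{The construction.} Fix a seed link $B = B_1\cup\dots\cup B_m$ that is a \emph{slice} boundary link whose components $B_i = T\#\overline{T}$ (a trefoil connect-summed with its mirror, hence slice with Alexander polynomial $(t^2-t+1)^2$) are slice but have nontrivial Alexander module; then $B$ has slice components, vanishing Milnor invariants, vanishing $\rho$-invariants over every representation that extends over its slice complement, and a Blanchfield pairing which is nontrivial but admits only a finite family of metabolizers $P_1,\dots,P_t$. The nilpotent quotients of $\pi_1(S^3\sm B)$ agree with those of the free group on the meridians $x_1,\dots,x_m$; choose $\eta_1,\dots,\eta_t$ in $\pi_1(S^3\sm B)$ that are left-normed iterated commutators of the $x_i$ of weight $\geq k$, so each lies suitably deep in the lower central series --- deep enough that infecting along it cannot change Milnor's invariants of length $<k$ --- while remaining nontrivial in the metabelian quotient (not in $\pi_1(S^3\sm B)^{(2)}$, since left-normed commutators survive to the free metabelian group). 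Arrange, as one may, that each $\eta_j$ is unknotted and split from every component of $B$ and is dual to the metabolizer $P_j$. Finally let $K_1,\dots,K_t$ be connected sums of trefoils chosen so that $\big|\int_{S^1}\sigma_{K_\ell}(\omega)\,d\omega\big| > \sum_{i<\ell}\big|\int_{S^1}\sigma_{K_i}(\omega)\,d\omega\big|$ for each $\ell$, and set $L = B_{\eta_1,\dots,\eta_t}(K_1,\dots,K_t)$.

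\textbf{Verification.} Since $B$ is a boundary link, $\mu_I(B)=0$ for all $I$, and since each $\eta_j$ lies suitably deep in the lower central series of $\pi_1(S^3\sm B)$, infection along the $\eta_j$ does not affect Milnor's invariants of length $<k$; hence $\mu_I(L)=0$ for $|I|<k$, proving \eqref{item:vanishingMuFor0.5}. As $B$ and the $K_\ell$ are all $0$-solvable, standard facts about the behaviour of the solvable filtration under infection give that $L$ is $0$-solvable, and since each $\eta_j$ is split from every component of $B$, the components of $L$ equal those of $B$ and are slice. Moreover the $\eta_j$ are null-homologous, so infection along them leaves the torsion part of the Blanchfield pairing of $B$ unchanged; thus $L$ still has metabolizers controlled by $P_1,\dots,P_t$. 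Now suppose for contradiction that $L\simeq_{0.5}J$ for a sublink $J$ of a homology boundary link. The obstruction theorem forces a metabolizer $P$ of the Blanchfield pairing of $L$ such that $\rho(M_L,\phi)=0$ for every admissible $\phi$ vanishing on $P$; say $P=P_j$. But $\eta_j$ is dual to $P_j$, so some admissible $\phi$ has $\phi(\eta_j)\neq 1$, and then additivity of $\rho$-invariants under infection gives
\[
\rho(M_L,\phi)\;=\;\rho(M_B,\phi|_B)\;+\;\sum_{\ell\,:\,\phi(\eta_\ell)\neq 1}\varepsilon_\ell\int_{S^1}\sigma_{K_\ell}(\omega)\,d\omega ,\qquad \varepsilon_\ell\in\{\pm 1\},
\]
the $\ell$-th term occurring because $\eta_\ell$ has infinite order in the (torsion-free, being a $\mathbb Q$-vector space) abelian part of the target. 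Since $B$ is slice the first summand vanishes, the index $j$ occurs in the sum, and by the growth condition any nonempty sign-combination of the $\int_{S^1}\sigma_{K_\ell}$ is nonzero; hence $\rho(M_L,\phi)\neq 0$, a contradiction. So $L$ is not $0.5$-solve-equivalent to any sublink of a homology boundary link.

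\textbf{The main obstacle.} The crux is the obstruction theorem, and in particular its validity for \emph{sublinks} of homology boundary links rather than for homology boundary links themselves: such a sublink need not be a homology boundary link, and its fundamental group maps only onto a group of deficiency one per component, not onto a free group, so the Cochran--Orr--Livingston analysis of the localized Blanchfield pairing must be carried out in this more general setting and shown still to produce the metabolizer that kills the $\rho$-invariant. The secondary difficulty is the robustness used in the verification --- that the metabolizers of $L$ are genuinely controlled by those of the seed $B$ (so that only finitely many $\rho$-invariants need to be defeated) and that the deep commutators $\eta_j$ can be positioned dual to them simultaneously. Keeping the seed's Blanchfield pairing as simple as possible and letting the signature integrals of the $K_\ell$ grow rapidly is what makes the counting argument go through; the delicate interplay is that the $\eta_j$ must be deep enough in the lower central series to preserve $\mu_I$ for $|I|<k$ yet shallow enough in the derived series to be seen by the metabelian representations that carry the obstruction.
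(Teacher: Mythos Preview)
Your approach diverges substantially from the paper's, and the central obstruction you invoke is not the one the paper establishes.

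The paper's obstruction (Proposition~\ref{prop: Blanchfield obstructs HBL}) is not a $\rho$-invariant/metabolizer statement at all: it says that for a two-component link $L=L_1\cup L_2$, if $L$ is $0.5$-solve-equivalent to a sublink of a homology boundary link then the classical Blanchfield self-pairing $\Bl_{L_1}([\widetilde{L_2}],[\widetilde{L_2}])$ vanishes in $\Q(t)/\Lambda$. This is a single, directly computable quantity, not a family of signatures indexed by metabolizers. The paper then builds $L$ recursively: start with the link of Figure~\ref{fig: 0.5 example} (whose first component is $9_{46}$ and whose second component generates $\A(L_1)$, so the Blanchfield self-pairing is nonzero by nondegeneracy), and at each stage take an exterior band sum with the mirror of a Brunnian link realizing the current leading Milnor invariants (Cochran's realization theorem). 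Because the Brunnian link has unknotted components, the band sum leaves the knot type of $L_1$ and the homotopy class of $L_2$ in $S^3\sm L_1$ unchanged, so the Blanchfield self-pairing is literally preserved through the recursion while Milnor's invariants of increasing length are killed.

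Your scheme --- infecting a multi-component slice boundary link along deep left-normed commutators and separating finitely many metabolizers by a growth condition on signature integrals --- requires a Casson--Gordon/COT-style vanishing theorem for $0.5$-solve-equivalence to sublinks of homology boundary links that the paper never proves and that does not follow from Proposition~\ref{prop: Blanchfield obstructs HBL}. You correctly flag this as ``the main obstacle,'' and it is a genuine gap rather than a routine adaptation: even after using Theorem~\ref{thm: HBL = BL} to reduce to boundary links, extracting a metabolizer for the relevant link Blanchfield form from a $0.5$-solve-equivalence, and then tying that metabolizer to vanishing of precisely the $\rho$-invariants appearing in your additivity formula, is substantially more than the paper needs or supplies. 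There are also secondary issues you wave past: arranging that each $\eta_j$ is simultaneously a weight-$\ge k$ commutator, unknotted, split from every individual component of $B$ (so that the components of $L$ literally equal those of $B$), \emph{and} Blanchfield-dual to a prescribed metabolizer $P_j$; and that the metabolizers of $L$ after infection remain exactly $P_1,\dots,P_t$. None of these is obviously realizable simultaneously. In short, the paper's route bypasses all of this with one nondegenerate Blanchfield value and a Brunnian band-sum trick; your route, while in the right spirit, rests on an unproved obstruction theorem and several delicate simultaneous constraints that would each require justification.
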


This serves as evidence that $\mathcal{SHBL}\subsetneq \mathcal{M}$.  Indeed, if one could improve conclusion \pref{item:vanishingMuFor0.5} of Theorem~\ref{thm: n=0} to say that all Milnor's invariants of $L$ vanish then one will have proven that the containment is proper. In order to motivate a strategy for such an improvement, we take a moment and summarize the proof of Theorem~\ref{thm: n=0}.  The tool we use to obstruct concordance to boundary links is the Blanchfield form, which we discuss now.  Let $L=L_1\cup L_2$ be a link with linking number $0$ and $\widetilde{L_2}$ be a lift of $L_2$ to the infinite cyclic cover of the exterior $L_1$.  The (rational) first homology of this infinite cyclic cover is the \emph{(rational) Alexander module} of $L_1$, denoted by $\A(L_1)$. The homology class of $\widetilde{L_2}$ is denoted by $[\widetilde{L_2}]\in \A(L_1)$. There is a sesquilinear form $$\Bl_{L_1}\colon\A(L_1)\times \A(L_1) \to \Q(t)/ \Lambda,$$ called the \emph{Blanchfield form}. Here and throughout the article we write $\Lambda := \mathbb{Q}[t,t^{-1}]$ for the Laurent polynomial ring.  In \cite{Davis-Park-2021} the first and third authors prove that this form obstructs concordance to links with an unknotted component.  As we shall prove, it also obstructs $L$ being concordant to a homology boundary link.

\begin{repproposition}{prop: Blanchfield obstructs HBL} If $L=L_1\cup L_2$ is $0.5$-solve-equivalent to a sublink of a homology boundary link, then $L_2$ lifts to a knot $\widetilde{L_2}$ in the infinite cyclic cover of the exterior of $L_1$.  Let $[\widetilde{L_2}]\in \A(L_1)$ be its rational homology class, then $\Bl_{L_1}([\widetilde{L_2}],[\widetilde{L_2}])=0$.  
\end{repproposition}

Let $L=L_1\cup L_2$ be any link with linking number zero and $\Bl_{L_1}([\widetilde{L_2}],[\widetilde{L_2}])\neq 0$.  Let $k_0$ be a positive integer such that $\mu_I(L)=0$ for each multi-index $I$ with length $|I| < k_0$.  An example with $k_0=4$ appears in Figure~~\ref{fig: 0.5 example}.  By work of Cochran \cite[Theorem 3.3]{Cochran91}, there is another link $J$ which has unknotted components and which satisfies $\mu_I(L)=\mu_I(J)$ for all multi-indices with $|I| \le k_0$.
By the additivity of the first non-vanishing Milnor's invariants \cite[Theorem 8.13]{Cochran91}, we see that the link $L' = L_1'\cup L_2' = L\#_\delta -J$ obtained by an exterior band sum of $L$ with the reverse of the mirror image of $J$ has vanishing Milnor's invariants of length less than $k_0+1$. Basic properties of the Blanchfield form imply that $\Bl_{L_1'}([\widetilde{L'_2}],[\widetilde{L'_2}]) = \Bl_{L_1}([\widetilde{L_2}],[\widetilde{L_2}])\neq 0$.  Combined with Proposition~\ref{prop: Blanchfield obstructs HBL}, this implies that $L'$ is not $0.5$-solve-equivalent to a sublink of homology boundary link.  Replace $L$ by $L'$ and iterate the argument above to prove Theorem~\ref{thm: n=0}.
\vspace{1cm}
  \begin{figure}[h]
\begin{subfigure}{.3\textwidth}
         \centering
\begin{picture}(80,110)
\put(0,0){\includegraphics[height=.15\textheight]{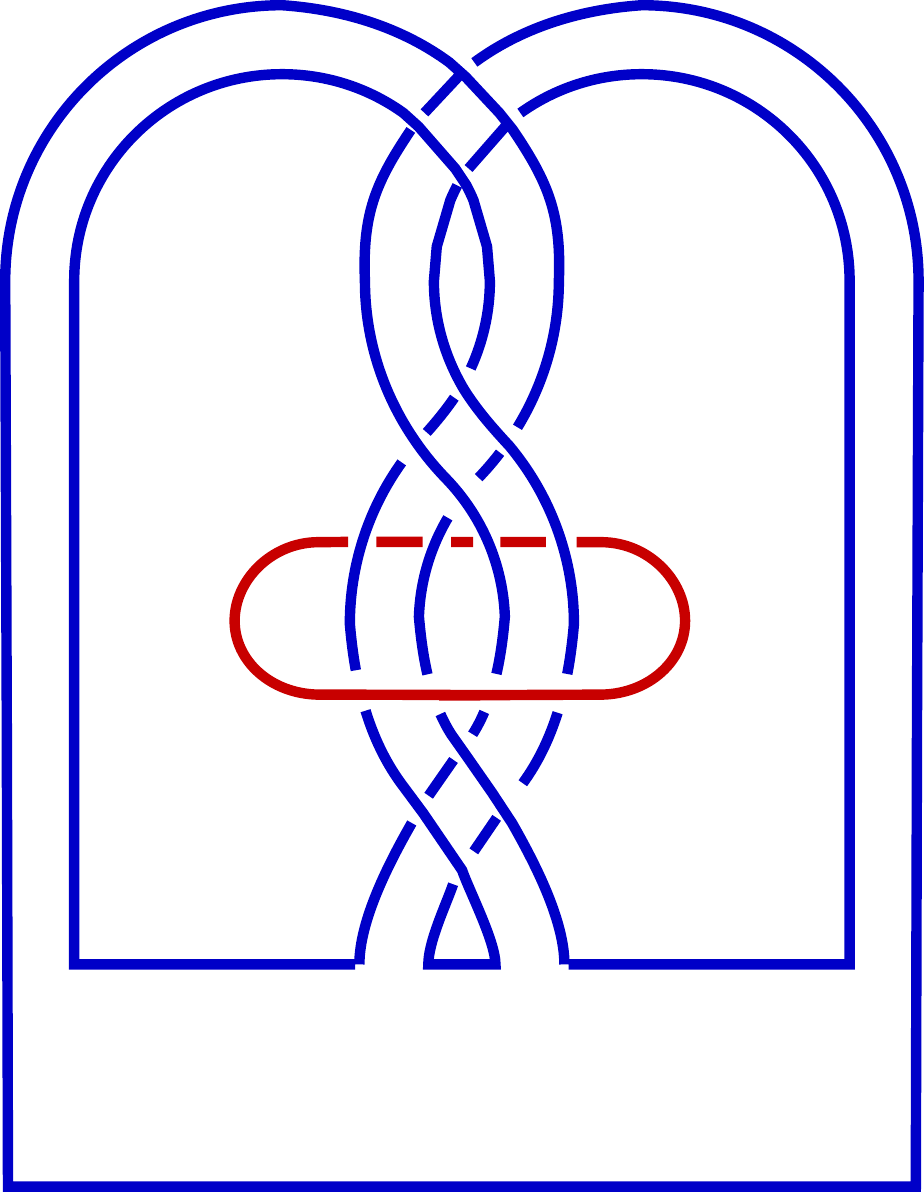}}
\put(34,5) {$L_1$}
\put(10,37) {$L_2$}
\end{picture}
\caption{}\label{fig: 0.5 example}
\end{subfigure}
\begin{subfigure}{.3\textwidth}         
\centering
\begin{picture}(95,110)
\put(-2,0){\includegraphics[height=.15\textheight]{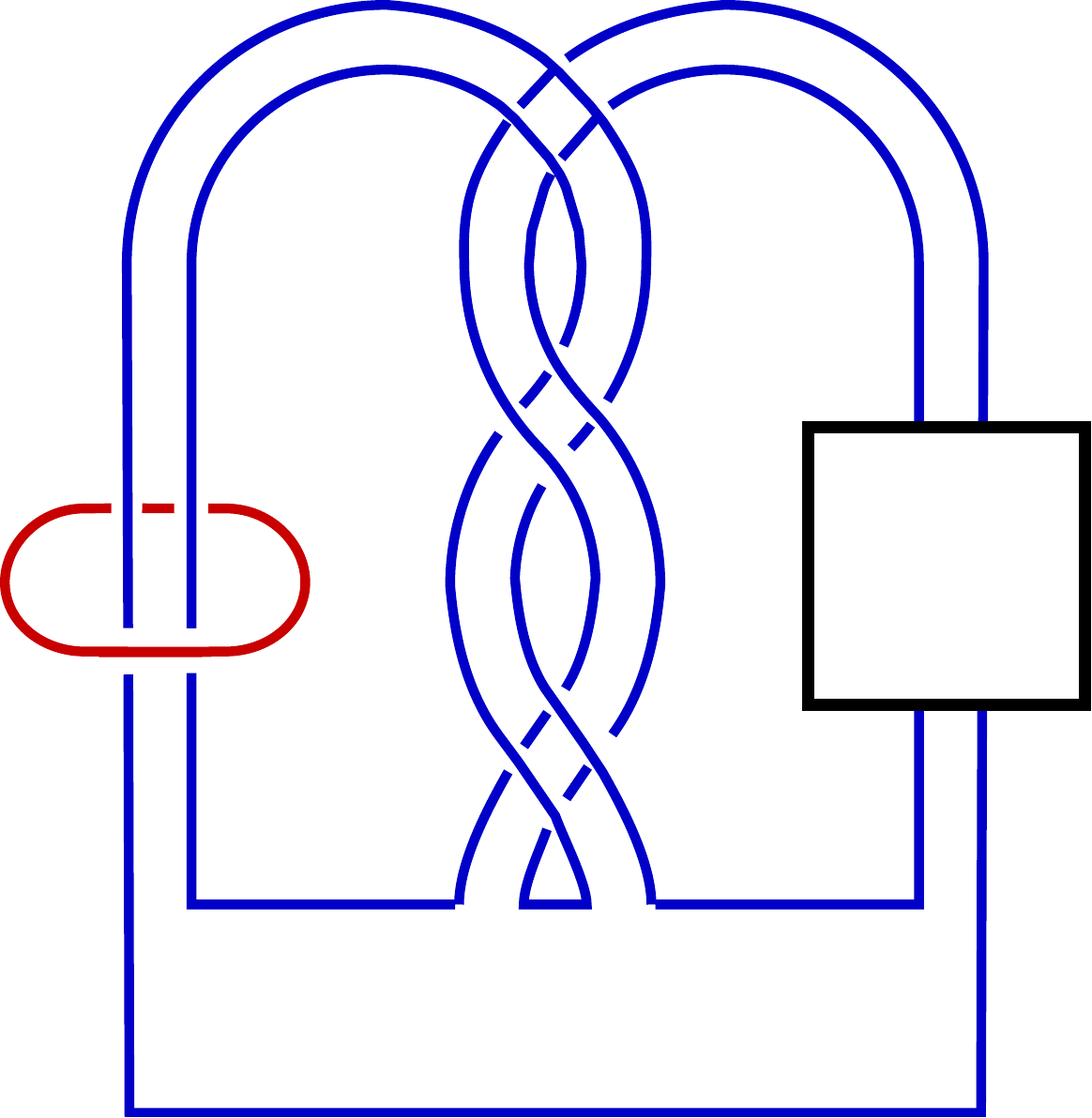}}
\put(78,45){$J$}
\put(42,5) {$L_1$}
\put(-11,37) {$L_2$}
\end{picture}
\caption{}\label{fig: highly sol example}
\end{subfigure}
\caption{Left:  A link $L$ satisfying the assumptions of Proposition~\ref{prop: Blanchfield obstructs HBL}.  Right:  For any natural number and a suitable choice of $J$ this link is $n$-solvable and yet is not $n.5$-solve-equivalent to a boundary link. }
\end{figure} 
\vspace{0.2cm}

Recall that if $|I|<|J|$ and $\mu_I(L)\neq 0$, then $\mu_J(L)$ is not well defined.  As a consequence, we cannot extend \cite[Theorem 3.3]{Cochran91} to Milnor's invariants of length greater than $k+1$ directly. On the other hand, if $S$ is a string link then $\mu_I(S)$ is well defined for every multi-index $I$, regardless of the vanishing of lower order Milnor's invariants~\cite{HL2}.
  (In fact \cite{HL2} works with the Artin representation, whose vanishing is equivalent to the vanishing of Milnor's invariants.  In a slight abuse of notation we will still refer to this in the language of Milnor's invariants of~$S$.) The same arguments as referenced in the previous paragraph together with induction reveal that for any string link $S$ and $k\in \N$ there is a string link $T$ with unknotted components with $\mu_I(S)=\mu_I(T)$ for all $|I|\le k$.  It natural to ask whether the condition that $|I|\le k$ can be removed. 

\begin{question}\label{quest: unknotted Milnor}
Let $S$ be a string link.  Does there exist a string link $T$ with unknotted components which satisfies that $\mu_I(S)=\mu_I(T)$ for every multi-index $I$?  This is equivalent to asking if the closure, $\widehat{S*T^{-1}}$ has vanishing Milnor's invariants.  (As a reminder $T^{-1}$, the inverse of $T$ in the string link concordance group, is the result of reflecting  $T\subseteq D^2\times[0,1]$ across $D^2\times\{1/2\}$  and then reversing all of the arrows.)   
\end{question}

Currently we do not know how to resolve this question.  However, no finite collection of {Milnor's} invariants can be used to deduce that a link has a non-trivially knotted component \cite[Theorem 3.3]{Cochran91}.
 It would be surprising if the collection of all Milnor's invariants were able to do so. Assuming this question is answered in the affirmative, we prove that the containment $\mathcal{SHBL}\subseteq \mathcal{M}$  of \pref{eqn: comparison} is proper.

\begin{theorem}\label{thm: n=0 vsn 2}
If the answer to Question~\ref{quest: unknotted Milnor} is affirmative, then there exists a 0-solvable link $L$ with slice components such that 
\begin{enumerate}[font=\upshape]
\item $L$ has vanishing Milnor's invariants.
\item $L$ is not $0.5$-solve-equivalent to any sublink of a homology boundary link.
\end{enumerate}
In particular, $L$ is not {concordant} to any sublink of a homology boundary link even though it has vanishing Milnor's invariants. 
\end{theorem}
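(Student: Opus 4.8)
The plan is to follow the proof of Theorem~\ref{thm: n=0}, changing only the mechanism by which Milnor's invariants are corrected. There one kills Milnor's invariants of bounded length and iterates indefinitely, obtaining vanishing only up to each finite length; here the hypothesized affirmative answer to Question~\ref{quest: unknotted Milnor} is used to correct all of Milnor's invariants in a single step. Everything else --- $0$-solvability via Martin's classification, sliceness of the components, preservation of the Blanchfield pairing under the relevant band sum, and the obstruction of Proposition~\ref{prop: Blanchfield obstructs HBL} --- is imported from the proof of Theorem~\ref{thm: n=0}.

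I would start from a link $L^0 = L^0_1 \cup L^0_2$ with linking number zero, slice components, and $\Bl_{L^0_1}([\widetilde{L^0_2}],[\widetilde{L^0_2}]) \neq 0$ --- for instance the link of Figure~\ref{fig: 0.5 example}. Cut $L^0$ along a ball meeting it in two trivial arcs to express it as the closure $\widehat{S}$ of a $2$-component string link $S = S_1 \cup S_2$, so that $\widehat{S}_i$ has the knot type of $L^0_i$. By the affirmative answer to Question~\ref{quest: unknotted Milnor} there is a $2$-component string link $T$ with unknotted components such that $\mu_I(S) = \mu_I(T)$ for every multi-index $I$; equivalently, $\widehat{S*T^{-1}}$ has vanishing Milnor's invariants. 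Let $L' = L'_1 \cup L'_2$ be the link $\widehat{S*T^{-1}}$, which is an exterior band sum of $L^0$ with the link $\widehat{T^{-1}}$; the latter has unknotted components, being the closure of $T^{-1}$, and concretely each $L'_i = \widehat{S}_i \# \widehat{T^{-1}}_i$ is the connected sum of $L^0_i$ with an unknot.

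From here the verification is exactly as in Theorem~\ref{thm: n=0}. All of $L'$'s Milnor's invariants vanish, by construction, so condition $(1)$ holds. Since $L'_1 = L^0_1$ and the contribution to the Blanchfield pairing coming from $\widehat{T^{-1}}$ vanishes (its first component is an unknot, with trivial Alexander module), the basic additivity properties of the Blanchfield form give $\Bl_{L'_1}([\widetilde{L'_2}],[\widetilde{L'_2}]) = \Bl_{L^0_1}([\widetilde{L^0_2}],[\widetilde{L^0_2}]) \neq 0$; by Proposition~\ref{prop: Blanchfield obstructs HBL}, $L'$ is not $0.5$-solve-equivalent to any sublink of a homology boundary link, which is condition $(2)$, and since concordant links are $0.5$-solve-equivalent this also yields the final sentence of the theorem. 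Finally, each $L'_i$ is a connected sum of a slice knot with an unknot, hence slice, and --- just as in the proof of Theorem~\ref{thm: n=0} --- the vanishing of Milnor's invariants of length at most $4$ together with Martin's classification \cite{Martin22} shows that $L'$ is $0$-solvable.

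The one point requiring care --- and the only place where anything genuinely new over Theorem~\ref{thm: n=0} occurs --- is arranging a single exterior band sum $L^0 \#_\delta \widehat{T^{-1}}$ that simultaneously realizes the closure $\widehat{S*T^{-1}}$ (so that the additivity of the first nonvanishing Milnor's invariant used in Theorem~\ref{thm: n=0} is upgraded to the exact statement that $\widehat{S*T^{-1}}$ has vanishing Milnor's invariants) and is of the restricted form in which each component is merely connect-summed with an unknot (so that component knot types, and hence the Blanchfield pairing, are left unchanged). Taking the bands $\delta$ to run through the closing arcs of the two string links makes this routine, but it is the point at which the Milnor-theoretic and Blanchfield-theoretic halves of the construction have to be made compatible.
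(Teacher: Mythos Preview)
Your proposal is correct and follows essentially the same approach as the paper's own proof: start with a string link $S$ closing to a link with nonzero Blanchfield self-pairing, use the affirmative answer to Question~\ref{quest: unknotted Milnor} to obtain $T$ with unknotted components, and set $L = \widehat{S*T^{-1}}$. The only cosmetic difference is in justifying the preservation of the Blanchfield pairing---the paper notes that since $T$ has unknotted components, $L_1$ is isotopic to $L_1^0$ and $L_2$ is homotopic to $L_2^0$ in the exterior of $L_1$, while you phrase it as additivity under band sum with a link whose first component is unknotted---but the two arguments are equivalent.
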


\begin{proof}Let $L^0=L_1^0\cup L_2^0$ be a link 
with {$\lk(L_1^0, L_2^0)=0$ and $\Bl_{L_1^0}([\widetilde{L_2^0}],[\widetilde{L_2^0}])\neq0$}, so that by Proposition~\ref{prop: Blanchfield obstructs HBL} $L$ is not 0.5-solvably equivalent to a sublink of a homology boundary link.
  Pick a string link $S$ whose closure is $L^0$ and for which there is another string link $T$ with unknotted components so that $L=L_1\cup L_2:=\widehat{S*T^{-1}}$ has vanishing Milnor's invariants.  The vanishing of the Milnor's invariants of $S*T^{-1}$ implies the vanishing of the Milnor's invariants of $L$ \cite[Lemma 3.7]{HL2}.

The unknottedness of the components of $T$ implies that $L_1$ is isotopic to $L_1^0$.  If we identify $L_1$ and $L_1^0$, then $L_2$ is homotopic to $L_2^0$ in the exterior of $L_1$.  Then we have $\Bl_{L_1}([\widetilde L_2],[\widetilde L_2]) = \Bl_{\widetilde L_1^0}([\widetilde L_2^0],[\widetilde L_2^0])\neq0$.  Thus, by Proposition~\ref{prop: Blanchfield obstructs HBL}, $L$ is not $0.5$-solve-equivalent to a sublink of a homology boundary link.  
\end{proof}

Recall that the main theorem of \cite{Cochran-Orr:1990-1, Cochran-Orr:1993-1, Livingston:1990-1} is the existence of homology boundary links which are not concordant to boundary links. Cochran-Orr and Livingston use the linking form of branched covers and Casson-Gordon-invariants, respectively. 
While the precise interaction of these invariants with the solvable filtration appears to be unknown for links, Casson-Gordon invariants of $1.5$-solvable knots vanish \cite[Theorem 9.11]{COT03} and so the tools of Cochran-Orr and Livingston seem likely to fail to detect highly solvable links. 
  Links that sit deep in the solvable filtration can be thought of as  very close to being slice.
 The second goal of this paper is to find homology boundary links which lie deep in the solvable filtration and still are not concordant to a boundary link. More precisely, we prove the following.

\begin{theorem}\label{thm: main}
For each positive integer $n$, there exists an $n$-solvable homology boundary link which is not $n.5$-solve-equivalent to a boundary link.\end{theorem}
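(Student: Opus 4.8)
The plan is to realize $L$ by an infection (genetic modification) on a suitable nontrivial boundary link $B$, infecting along a curve $\gamma$ that lies deep in the derived series of $\pi_1(S^3\sm B)$ but in the kernel of its ``free quotient'' map, and using an auxiliary knot $J$ with an enormous von Neumann $\rho$-invariant. Let $\psi\colon\pi_1(S^3\sm B)\onto F$ be the canonical epimorphism onto the free group carrying the $i$-th meridian to the $i$-th generator, and set $K=\ker\psi$. Since $\psi$ is an isomorphism on first homology, $K\subseteq\pi_1(S^3\sm B)^{(1)}$, so $K^{(n-1)}\subseteq\pi_1(S^3\sm B)^{(n)}\cap K$. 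Choosing $B$ so that $K$ contains a nonabelian free subgroup (which holds for essentially any boundary link other than a trivial link), one can select a nontrivial embedded curve $\gamma\subset S^3\sm B$, unknotted in $S^3$, with $[\gamma]\in K^{(n-1)}$ and with $[\gamma]$ nonzero in the $n$-th order (rational) Alexander module described below. Finally fix a knot $J$ with $\Arf(J)=0$ and $|\rho^{(2)}(J)|$ larger than a constant depending only on $B$ and $\gamma$, and let $L:=B_\gamma(J)$ be the result of infecting $B$ along $\gamma$ by $J$ (this is the link sketched in Figure~\ref{fig: highly sol example}).

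I would first check that $L$ is an $n$-solvable homology boundary link. Because $\gamma\in K=\ker\psi$, both the meridian and the zero-framed longitude of the glued-in copy of the exterior of $J$ are sent to $1$ by the evident extension of $\psi$, so $L$ admits an epimorphism onto $F$ carrying meridians to generators and is therefore a homology boundary link. Moreover a boundary link is $n$-solvable for every $n$ — its zero-surgery bounds a $4$-manifold with free fundamental group and vanishing second homology, built from the Seifert surfaces — and infecting an $n$-solvable link along a curve in the $n$-th derived subgroup of its group by a knot with vanishing Arf invariant preserves $n$-solvability; since $[\gamma]\in\pi_1(S^3\sm B)^{(n)}$ and $\Arf(J)=0$, we conclude $L\in\F_n$. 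The corresponding statements for the $n$-solve-equivalence of \cite{Davis:2020-1}, which we also need, follow by the same infection-and-derived-series analysis as in \cite{COT03,Davis:2020-1}.

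The heart of the argument is to obstruct $L$ from being $n.5$-solve-equivalent to \emph{any} boundary link, and for this I would adapt to the solvable filtration the method by which Cochran--Orr originally distinguished homology boundary links from boundary links \cite{Cochran-Orr:1990-1,Cochran-Orr:1993-1}. To $M_L$ one attaches a higher-order obstruction living over the ``free-by-solvable'' coefficient system of the filtration $\pi_1(M_L)\supseteq K_L\supseteq K_L^{(1)}\supseteq\dots\supseteq K_L^{(n-1)}$, where $K_L$ is the kernel of the free quotient of $L$: a Blanchfield-type pairing on the $n$-th order Alexander module of $L$, equivalently the von Neumann $\rho$-invariant of the associated representation. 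A higher-order Blanchfield computation in the spirit of Cochran--Harvey--Leidy should show that $[\gamma]$ survives as a nonzero class in this module and that the pairing detects $\rho^{(2)}(J)$ up to a controlled error, so the obstruction is nonzero once $|\rho^{(2)}(J)|$ is large. Note that the \emph{classical} Blanchfield form of $L$ necessarily vanishes, because $L$ is itself a homology boundary link and so satisfies the conclusion of Proposition~\ref{prop: Blanchfield obstructs HBL}; this is precisely why the obstruction must be placed one level up. On the other hand, every boundary link $\bar B$ kills this obstruction: its zero-surgery bounds a $4$-manifold realizing the free quotient (from its Seifert surfaces) over which the coefficient system extends, and the corresponding defect dies there — one uses $\widetilde\Omega_4(BF)=0$. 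Finally, an $n.5$-solve-equivalence from $L$ to a boundary link $\bar B$ is realized by a cobordism over which the coefficient system extends, and the standard manipulation of the (higher-order Blanchfield, or $L^2$-signature) defect of such a cobordism shows the obstruction is unchanged; this forces $L$'s obstruction to vanish, a contradiction.

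The step I expect to be the main obstacle is making this last paragraph rigorous: one must build the free-by-solvable coefficient tower carefully enough that it behaves functorially under $n.5$-solve-equivalence \emph{to a boundary link}, so that the coefficient system transported to $\bar B$ genuinely extends over $\bar B$'s Seifert-surface $4$-manifold. This is exactly where the boundary — rather than merely homology boundary — hypothesis on the target is indispensable, and it is delicate because the free group $F$ is not poly-torsion-free-abelian, so neither the Ore-localization machinery of \cite{COT03} nor the amenable signature theorem applies to the whole coefficient group; as in Cochran--Orr's treatment of boundary links one must handle the free part with $\Z F$-coefficient homological algebra and splice it with Ore-localized coefficients over the solvable part. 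Granting this, the nonvanishing of $L$'s higher-order obstruction against its vanishing on all boundary links establishes Theorem~\ref{thm: main}.
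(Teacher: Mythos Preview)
Your proposal has two genuine gaps, and the paper proceeds by a quite different and much shorter route.

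First, the assertion that ``a boundary link is $n$-solvable for every $n$ --- its zero-surgery bounds a $4$-manifold with free fundamental group and vanishing second homology, built from the Seifert surfaces'' is false. Pushing disjoint Seifert surfaces into $B^4$ and excising them yields a $4$-manifold whose $H_2$ has rank twice the total genus, not zero; and a boundary link with a non-slice component (e.g.\ a trefoil) is not even $1.5$-solvable. You could patch this by demanding that $B$ be slice, but then you must actually exhibit a slice boundary link whose kernel $K$ is rich enough to contain a $\gamma\in K^{(n-1)}$ with the required survival in the $n$-th order module, and you have not done so.

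Second, and more seriously, the obstruction you propose over ``free-by-solvable'' coefficients is exactly the step you yourself flag as the main obstacle, and it is not carried out. The free group is not PTFA, so neither the Ore-localization machinery nor the amenable signature theorems apply, and there is no result you can cite that makes this rigorous for $n.5$-solve-equivalence. As written, this is a plan rather than a proof.

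The paper sidesteps all of this with an elementary observation. If $J=J_1\cup J_2$ is a boundary link then $J_2$ bounds a Seifert surface $G_2$ disjoint from a Seifert surface $G_1$ for $J_1$; every curve on $G_2$ has zero linking number with $J_1$ and hence lies in $\pi_1(S^3\sm J_1)^{(1)}$, so $J_2$, being a product of commutators of such curves, lies in $\pi_1(S^3\sm J_1)^{(2)}$. Proposition~\ref{prop:2ndcommu} then shows that if $L_1\cup L_2$ is $n.5$-solve-equivalent to a boundary link and $L_1$ is $n.5$-solvable, there is an $n.5$-solution $Z$ for $L_1$ with $[L_2]\in\pi_1(Z)^{(2)}$, so by Proposition~\ref{prop: increase sol} the doubling operator $(L_1)_{L_2}$ raises solvability by two (Corollary~\ref{cor: bdry link doubling op}). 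The link $L(J_n)$ of Figure~\ref{fig: highly sol example} is rigged so that $(L_1)_{L_2}(J_n)=R_\eta^{\,n}(J_1)$ is a Cochran--Harvey--Leidy example in $\F_n\sm\F_{n.5}$ (Proposition~\ref{prop:CHL}); were $L(J_n)$ $n.5$-solve-equivalent to a boundary link, this knot would be $n.5$-solvable, a contradiction. Thus everything reduces to the established PTFA theory for a single knot, and no free-group coefficient machinery is needed at all.
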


For the sake of being explicit, the link of Theorem~\ref{thm: main} appears in Figure~\ref{fig: highly sol example} for an appropriate choice of the knot $J$. Note that in the above theorem we require $n$ to be a positive integer. There is no distinction between boundary links and homology boundary links up to $0$-solve-equivalence. Indeed, as a consequence of \cite[Theorem 1]{Martin22} any homology boundary link, and more generally any link with vanishing Milnor's invariants of length up to 4 is $0$-solve-equivalent to a boundary link. We extend this further and prove that homology boundary links are equivalent to boundary links up to $0.5$-solve-equivalence also.

\begin{theorem}\label{thm: HBL = BL}
For any link $L$, the following are equivalent:
\begin{enumerate}[font=\upshape]
\item $L$ is $0.5$-solve-equivalent to a boundary link;
\item $L$ is $0.5$-solve-equivalent to a homology boundary link;
\item $L$ is $0.5$-solve-equivalent to a sublink of a homology boundary link.
\end{enumerate}
\end{theorem}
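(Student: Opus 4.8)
The implications $(1)\Rightarrow(2)\Rightarrow(3)$ are immediate, since every boundary link is a homology boundary link and every homology boundary link is a sublink of itself. So the content is the implication $(3)\Rightarrow(1)$: a link $0.5$-solve-equivalent to a sublink of a homology boundary link is already $0.5$-solve-equivalent to an honest boundary link. My plan is to reduce quickly to the case of a genuine sublink of a homology boundary link (using that $0.5$-solve-equivalence is an equivalence relation, so it suffices to show each sublink of a homology boundary link is $0.5$-solve-equivalent to a boundary link), and then to produce an explicit $0.5$-solvable cobordism realizing the equivalence.

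First I would recall the group-theoretic characterizations. A link $L$ with $m$ components is a homology boundary link precisely when there is a surjection $\pi_1(S^3\smallsetminus L)\twoheadrightarrow F$ onto the free group of rank $m$; it is a boundary link precisely when such a surjection can be chosen to send meridians to a free generating set. A sublink $L'$ of an $m'$-component homology boundary link $L$ thus has a surjection $\pi_1(S^3\smallsetminus L')\twoheadrightarrow \pi_1(S^3\smallsetminus L)\twoheadrightarrow F_{m'}$, but the composite need not carry meridians to generators. The key input is Martin's classification \cite[Theorem 1]{Martin22} (already invoked in the introduction): having vanishing Milnor's invariants of length $\le 4$ is equivalent to being $0$-solve-equivalent to a boundary link — in particular sublinks of homology boundary links are $0$-solve-equivalent to boundary links. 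So I would aim to upgrade the $0$-solve-equivalence to a $0.5$-solve-equivalence. Concretely, the strategy is: build a $4$-manifold $W$ cobounded by $S^3\smallsetminus L'$ and $S^3\smallsetminus B$ for a boundary link $B$, with $H_1(W)$ carrying the appropriate free quotient, such that the inclusions induce isomorphisms on $H_1$ and the intersection form on $H_2(W)$ (with the relevant twisted or $\Z$-coefficients) admits a "$0.5$-Lagrangian" — a half-basis of surfaces on which the form and the associated self-intersection/framing data vanish. This is exactly the hypothesis one must verify to conclude $L'\simeq_{0.5} B$.

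The cobordism $W$ itself should come from the realization procedure underlying Martin's theorem together with a choice of boundary link $B$ with the same Milnor's invariants up to length $4$: one surgers $S^3\smallsetminus L'$ along a framed link lying in the second derived subgroup (or the appropriate term dictated by $0.5$-solvability) of $\pi_1$, chosen to kill the discrepancy between the meridional generators and the free quotient, and checks that the resulting trace cobordism has the correct homological and $\pi_1$-behavior. The main obstacle, and the step I expect to require the most care, is verifying that this cobordism is genuinely $0.5$-solvable and not merely $0$-solvable: one must exhibit the dual pair of surface bases and check that all the self-intersection numbers vanish in the $\Z$-coefficient (rather than only in the first-order twisted) intersection form, and that the surgery curves can be taken deep enough in the lower central / derived series that the free quotient $H_1(W)\to F$ is unaffected. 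Handling the sublink-versus-full-link distinction — where meridians of $L'$ map to possibly complicated words in $F_{m'}$ rather than to generators — is what makes this more delicate than the boundary-link case, and it is precisely there that the restriction to the $0.5$ level (rather than higher) is used: at the $0.5$ stage one only needs to control $\pi_1$ modulo its second derived subgroup, which is still governed by the length-$\le 4$ Milnor data that Martin's theorem provides.
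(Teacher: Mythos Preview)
Your reduction to showing $(3)\Rightarrow(1)$ is fine, as is the observation that it suffices to show every sublink of a homology boundary link is $0.5$-solve-equivalent to a boundary link. However, the strategy you outline has a genuine gap.

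You propose to choose a boundary link $B$ with the same Milnor's invariants of length $\le 4$ as $L'$, invoke Martin's $0$-solve-equivalence, and then upgrade the resulting cobordism to a $0.5$-solve-equivalence. The justification offered is that ``at the $0.5$ stage one only needs to control $\pi_1$ modulo its second derived subgroup, which is still governed by the length-$\le 4$ Milnor data.'' This is not correct. The $0.5$ condition requires the Lagrangian surfaces to lift to $\pi_1(W)^{(1)}$, so what is relevant is the metabelian quotient $\pi_1/\pi_1^{(2)}$; this quotient encodes the Alexander module and Blanchfield form, which are emphatically \emph{not} determined by length-$\le 4$ Milnor invariants. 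Indeed, Theorem~\ref{thm: n=0} of this very paper exhibits links with vanishing Milnor's invariants of arbitrary length that are not $0.5$-solve-equivalent to any sublink of a homology boundary link. If length-$\le 4$ Milnor data sufficed for $0.5$-solve-equivalence to a boundary link, Theorem~\ref{thm: n=0} would be false. So matching Milnor's invariants cannot be the mechanism; some finer invariant of the homology boundary link structure must be used.

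The paper's approach supplies exactly that finer invariant. Using Cochran--Levine~\cite{CocLev91}, the homology boundary link is replaced by a concordant \emph{fusion of a boundary link}, which bounds a certain immersed surface (a fusion surface) carrying a Seifert form. Lemma~\ref{lem: S-equiv to bdry} shows this Seifert form is isomorphic to that of an embedded surface, hence of an actual boundary link; Theorem~\ref{thm: S-equiv implies 0.5-sol} then shows that fusions of boundary links with isomorphic Seifert forms are $0.5$-solve-equivalent, via a reduction to C-complexes with metabolic linking form (Theorem~\ref{thm: metabolic C-cplx implies 0.5-sol}) and the double-delta move. The key point you are missing is that the boundary link $B$ must be chosen to match the \emph{Seifert form} of the fusion surface, not merely the low-order Milnor invariants.
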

 Summarizing, after modding out by $0.5$-solve-equivalence, 
$$(\mathcal{BL} /\simeq_{0.5})=(\mathcal{HBL}/\simeq_{0.5}) =(\mathcal{SHBL}/\simeq_{0.5}) \subseteq (\M/\simeq_{0.5}).$$
According the Theorem~\ref{thm: n=0 vsn 2}, if the answer to Question~\ref{quest: unknotted Milnor} is affirmative, then $(\mathcal{SHBL}/\simeq_{0.5}) \subsetneq (\M/\simeq_{0.5}).$

\subsection*{Generalized Seifert matrices and $0.5$-solve-equivalence.}

Recall that $0.5$-solvability of knots is detected by their Seifert form \cite[Theorem 1.1]{COT03}.  In order to prove Theorem~\ref{thm: HBL = BL} we generalize this and generate a new sufficient condition for links to be $0.5$-solve-equivalent.

Up to concordance, the set of homology boundary links is the same as the set of fusions of boundary links \cite{CocLev91}.  We recall the precise definition of fusion in Section~\ref{sect: HBL = BL for 0.5} .  These links bound a special type of immersed surface which we call fusion complexes.  These surfaces have Seifert forms defined analogously to the Seifert form of a boundary link.  In Section~\ref{sect: HBL = BL for 0.5}, we define what it means for these forms to be isomorphic, and show that if these forms are isomorphic then the underlying links are $0.5$-solve-equivalent.

\begin{reptheorem}{thm: S-equiv implies 0.5-sol}
If $L$ and $L'$ are fusions of boundary links with fusion surfaces $G$ and $G'$ that have isomorphic Seifert forms, then $L$ and $L'$ are $0.5$-solve-equivalent. \end{reptheorem}

In Lemma~\ref{lem: S-equiv to bdry} we find that every fusion of boundary links shares its Seifert form with some boundary link.  Theorem~\ref{thm: HBL = BL} follows from these two results.

\subsection*{Notation and conventions} 
For any link $L$, the $0$-framed surgery on $S^3$ along $L$ is denoted by $M_L$. If $X$ is a submanifold properly embedded in a compact 4-manifold $W$, then an open tubular neighbourhood is denoted by $\nu X$. 

\subsection*{Organization} In Section~\ref{sect: solvable filtration}, we define solve-equivalence and explore its interaction with the solvable filtration and the operation of band sums.  In Section~\ref{sect: non 0.5 equiv to bdry}, we review the definition and properties of the Blanchfield form needed in the proof of Proposition~\ref{prop: Blanchfield obstructs HBL} and close with the proof of Theorem~\ref{thm: n=0}.  Section~\ref{sect: C-cplx}  consists of a generalization of a Seifert surface known as a C-complex along with Theorem~\ref{thm: metabolic C-cplx implies 0.5-sol}, a condition analogous to algebraic sliceness of knots which will imply 0.5-solvability of a link.  Theorem~\ref{thm: metabolic C-cplx implies 0.5-sol} is used in Section~\ref{sect: HBL = BL for 0.5} together with the notion of a fusion of boundary links \cite{CocLev91} to prove Theorem~\ref{thm: HBL = BL}.  In Section~\ref{sect:High sol background}, we begin thinking about links deep in the solvable filtration, and prove Theorem~\ref{thm: main} in Section~\ref{sect: HBL =/= BL for n.5}.  
\subsection*{Acknowledgements}
We are grateful to Jae Choon Cha and Kent Orr for useful conversations.

\section{The solvable filtration and solve-equivalence}\label{sect: solvable filtration}

In this section, we recall the solvable filtration of~\cite{COT03} and motivate  the definition of solve-equivalence of links appearing in~\cite{Davis:2020-1}. These filtrations are indexed by non-negative half-integers.  As such we will set $\N_0 := \{0,1, 2, \dots\}$ and $\frac{1}{2}\N_0 := \{0,0.5, 1, 1.5, \dots\}$.

 \begin{definition}
[\cite{COT03,CHL2011,Otto14}]\label{defn:solvable}
Let $L$ be an $m$-component link.
  We say that $L$ is \emph{$n$-solvable} ($n\in\N_0$) if there exists a smooth compact oriented 4-manifold  $W$ called an \emph{$n$-solution} bounded by $M_L$ such that:

   \begin{enumerate}[font=\upshape]

   \item $H_1(W)\cong \Z^m$ is generated by the meridians of $L$;
   
   \item \label{solvable classes}  For some $k\in \N_0$, there is a basis for $H_2(W)$ which is represented by $2k$ smoothly embedded, connected, compact, oriented surfaces $\{X_i, Y_i\}_{1\leq i \leq k}$ in $W$ with trivial normal bundles  all disjoint except that $X_i$ intersects $Y_i$ transversely in a single point for each $i$;
   
 \item \label{lift basis} For each $i$, the images of $\pi_1(X_i)\to \pi_1(W)$ and $\pi_1(Y_i)\to \pi_1(W)$ are both contained in $\pi_1(W)^{(n)}$.  
   \end{enumerate}

The surfaces $X_i$ and $Y_i$ are called \emph{$n$-Lagrangians} and \emph{$n$-duals}, respectively.  We say that 
$L$ is \emph{$(n.5)$-solvable} and  $W$ is an \emph{$(n.5)$-solution} if additionally the image of  $\pi_1(X_i)\to \pi_1(W)$ is contained in $\pi_1(W)^{(n+1)}$ for each $i$.  \end{definition}
   
Let $\F_n$ (respectively $\F_{n.5}$) denote the set of all $n$-solvable (respectively $n.5$-solvable) links up to concordance.

   \begin{remark}
  The derived series of a group $G$ is given by $G^{(0)} = G$ and $G^{(n+1)} = [G^{(n)}, G^{(n)}]$.  In particular, when $n=0$ there is no content to condition~\eqref{lift basis} of Definition~\ref{defn:solvable}.  
   \end{remark}
   
   If a link is $n$-solvable with $n$-solution $W$, then one can construct a new 4-manifold $W'$ with boundary $S^3$ by attaching 2-handles to $W$ along the meridians of $L$.  The components of $L$ bound disjoint smooth disks in this 4-manifold.  More specifically, they bound the cocores of the added 2-handles.  By tracking some conditions about this 4-manifold and these disks one gets an equivalent formulation of solvablility in terms of bounded disks, which we state formally now.
   
   \begin{proposition}\label{prop: solvability in terms of disks}
  An $m$-component link $L$ is $n$-solvable if and only if there exists a smooth 4-manifold $W$ bounded by $S^3$ in which $L$ bounds a disjoint union of smoothly embedded disks $\Delta$ such that
\begin{enumerate}[font=\upshape]
   \item $H_1(W)=0$;
   \item \label{prop solvable classes} For some $k\in \N_0$, there is a basis for $H_2(W)$ which is represented by $2k$ smoothly embedded, connected, compact, oriented surfaces $\{X_i, Y_i\}_{1\leq i \leq k}$ in $W \sm \nu\Delta$ with trivial normal bundles  all disjoint except that $X_i$ intersects $Y_i$ transversely in a single point for each $i$;
 \item \label{prop lift basis} For each $i$, the images of $\pi_1(X_i)\to \pi_1(W\sm \nu\Delta)$ and $\pi_1(Y_i)\to \pi_1(W\sm\nu\Delta)$ are both contained in $\pi_1(W\sm\nu\Delta)^{(n)}$.  
   \end{enumerate}
Moreover, $L$ is $(n.5)$-solvable if and only if additionally the image of $\pi_1(X_i)\to \pi_1(W\sm\nu\Delta)$ is contained in $\pi_1(W\sm \nu\Delta)^{(n+1)}$ for each $i$. \end{proposition}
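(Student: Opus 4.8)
The plan is to prove Proposition~\ref{prop: solvability in terms of disks} by establishing a correspondence between an $n$-solution $W$ in the sense of Definition~\ref{defn:solvable} and a pair $(W', \Delta)$ as in the statement, obtained by surgery along $\partial$. The construction in the forward direction is essentially the one sketched in the paragraph preceding the proposition: given an $n$-solution $W$ with $\partial W = M_L$, form $W' := W \cup (\text{2-handles along the meridians of }L)$. Since the meridians generate $H_1(W) \cong \Z^m$, attaching these $m$ two-handles kills $H_1$, so $H_1(W') = 0$; a Mayer--Vietoris argument shows $H_2(W') \cong H_2(W)$, since each 2-handle is attached along a homologically essential curve and contributes no new $H_2$ (the meridians are linearly independent in $H_1(W)$). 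The boundary becomes $S^3$ because $0$-surgery on $M_L$ along the meridians of $L$ undoes the $0$-surgery that produced $M_L$ from $S^3$. Finally, the components of $L$, viewed as sitting in $\partial W' \setminus (\text{handles}) = S^3$, bound the cocores of the attached 2-handles, which are disjointly embedded disks $\Delta$.

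The key point for conditions \eqref{prop solvable classes} and \eqref{prop lift basis} is that the complement $W' \setminus \nu\Delta$ deformation retracts onto $W$ (removing the cocores of the 2-handles leaves exactly $W$ back again, up to homotopy), so $\pi_1(W' \setminus \nu\Delta) \cong \pi_1(W)$ and the derived series condition is preserved verbatim. Thus the $n$-Lagrangians $X_i$ and $n$-duals $Y_i$ in $W$ become surfaces in $W' \setminus \nu\Delta$ satisfying exactly the same $\pi_1$-conditions, and the half-integer refinement ($\pi_1(X_i) \to \pi_1(W' \setminus \nu\Delta)^{(n+1)}$) transfers identically. For the reverse direction, given $(W', \Delta)$, one removes an open tubular neighborhood of the disks, $W := W' \setminus \nu\Delta$; then $\partial W = M_L$ (surgery on $S^3$ along $L$, since the meridian-framing of $\partial \Delta$ agrees with the $0$-framing — here one must check the framing bookkeeping carefully, but for homologically trivial $L$ bounding embedded disks this is automatic). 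The condition $H_1(W') = 0$ together with the half-lives-half-dies / Mayer--Vietoris computation forces $H_1(W) \cong \Z^m$ generated by the meridians of $L$ (which are the meridians of $\Delta$). Conditions \eqref{prop solvable classes} and \eqref{prop lift basis} translate back to \eqref{solvable classes} and \eqref{lift basis} without change.

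I would carry this out in the order: (1) the homological lemma that attaching 2-handles along the meridians gives $H_1 = 0$, $H_2$ unchanged, and $\partial = S^3$; (2) the homotopy-equivalence observation $W' \setminus \nu\Delta \simeq W$; (3) conclude the forward implication including the $n.5$ refinement; (4) reverse the construction, using half-lives-half-dies to recover $H_1(W) \cong \Z^m$ on meridians. The main obstacle I expect is item (4) — specifically, verifying that the hypothesis $H_1(W')=0$ really does pin down $H_1(W)$ \emph{and} that it is generated precisely by the meridians (rather than some other $\Z^m$), which requires tracking the long exact sequence of the pair $(W', W)$ and identifying the image of $H_2(W', W) \cong H_2(\nu\Delta, \partial\nu\Delta) \cong \Z^m$ with the span of the meridians in $H_1(W)$. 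Everything else is a routine handle-trading argument that is standard in the Cochran--Orr--Teichner framework, and since the proposition is only used as a convenient reformulation I would keep the write-up brief.
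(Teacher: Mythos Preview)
Your proposal is correct and follows the same two-handle/cocore correspondence that the paper uses; the paper's own proof is deliberately terse (it calls the argument a ``useful exercise'') and your outline simply fills in the homological details the authors omit. One small caution: your justification that the disk framing agrees with the $0$-framing (``for homologically trivial $L$ bounding embedded disks this is automatic'') is not quite the right reason---what forces it is condition~\eqref{prop solvable classes}, since the $X_i,Y_i$ being disjoint from $\Delta$ and spanning $H_2(W)$ implies each $[\Delta_i]\in H_2(W,\partial W)$ has zero intersection with every absolute class, hence zero relative self-intersection---but you already flagged this as the point needing care, and the paper does not spell it out either.
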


\begin{proof}This fact is well known to experts, and involves no hard ideas.  Thus, we merely summarize the argument.  Checking these details is a useful exercise for a reader attempting to familiarize themselves with the definition of solvability.   If $W$ and $\Delta$ are as in the statement of the proposition, then $W\sm\nu\Delta$ is an $n$-solution.  Conversely, if $W$ is an $n$-solution, then add 2-handles to $W$ along the meridians of $L$.  The resulting 4-manifold $W'$ is bounded by $S^3$ and the components of $L$ bound the cocores of the attached 2-handles in $W'$. If $\Delta$ is  the union of these cocores, then since $W=W'\smallsetminus \nu\Delta$, the conclusion follows.\end{proof}
   
Instead of looking for disks in 4-manifolds bounded by $S^3$, look for annuli in cobordisms from $S^3$ to itself.  Doing so inspires a filtered version of the equivalence relation of concordance.   This definition appears in \cite{Davis:2020-1} and  we recall it now.  Given a link $L$, the link obtained by reversing the orientation of every component of the mirror image of $L$ is denoted by $-L$.  
   
\begin{definition}[{\cite[Definition 2.3]{Davis:2020-1}}]\label{defn:n-solvableEquiv}
Let $L= L_1\cup\dots\cup L_m$ and $L'= L'_1\cup\dots\cup L'_m$ be $m$-component links.  We say that $L$ is \emph{$n$-solve-equivalent} to $L'$ and write $L\simeq_{n} L'$
 if there exists a smooth cobordism $W$ from $\bdry_+ W \cong S^3$ to $\bdry_- W \cong S^3$ with $L\subseteq \bdry_+ W$ and $L'\subseteq \bdry_- W$ together with smooth properly embedded annuli $C = C_1\cup\dots\cup C_m$ in $W$ with $\bdry C_i$ equal to $L_i\cup -L'_i$ such that:
\begin{enumerate}
\item $H_1(W)=0$;

\item \label{equiv classes} 
For some $k\in \mathbb{N}_0$, there is a basis for $H_2(W)$ which is represented by $2k$ smoothly embedded, connected, compact, oriented surfaces $\{X_i, Y_i\}_{1\leq i \leq k}$ in $W \sm \nu C$ with trivial normal bundles  all disjoint except that $X_i$ intersects $Y_i$ transversely in a single point for each $i$;

\item \label{equiv lift} For each $i$, the images of $\pi_1(X_i)\to \pi_1(W \sm \nu C)$ and $\pi_1(Y_i)\to \pi_1(W \sm \nu C)$ are both contained in $\pi_1(W \sm \nu C)^{(n)}$. 
\end{enumerate}

The pair $(W,C)$ is called an \emph{$n$-solve-equivalence}, and the surfaces $\{X_i\}_{1\leq i \leq k}$ and $\{Y_i\}_{1\leq i \leq k}$ are called \emph{$n$-Lagrangians} and \emph{$n$-duals}, respectively.  We say that $L$ is \emph{$n.5$-solve-equivalent} to $L'$ and $(W,C)$ an \emph{$(n.5)$-solve-equivalence} if additionally the image of $\pi_1(X_i)\to \pi_1(W \sm \nu C)$ is contained in $\pi_1(W \sm \nu C)^{(n+1)}$ for each $i$.
\end{definition}

This definition is compatible with the notion of solvabilty in that for any $n\in \frac{1}{2}\N_0$, the $n$-solve-equivalence class of the unlink is precisely the set of $n$-solvable links \cite[Proposition 2.4]{Davis:2020-1}. 

There is a generalization of the connected sum operation from the setting of knots to links.  Let $L=L_1\cup\dots \cup L_n$ and $L'=L_1'\cup\dots \cup L_n'$ be links in $S^3$ and $S\subseteq S^3$ be a 2-sphere with $L$ and $L'$ sitting in sidderent components of $S^3\setminus S$.
 Let $I=[0,1]$ and $\delta_1,\dots, \delta_n\colon I\times I \into S^3$ be a disjoint collection of bands satisfying that:
\begin{itemize}
\item $\delta_i\left[I\times\{0\}\right]$ is an arc in $L_i$, $\delta_i[I\times\{1\}]$ in an arc in $L_i'$ with the opposite orientation, and $\delta_i[I\times I]$ is otherwise disjoint from $L\cup L'$.
\item $\delta_i$ intersects $S$ in transversely $\delta_i[I\times\{1/2\}]$.  
\end{itemize}
The \emph{exterior band sum} of $L$ and $L'$ along $\delta$, denoted $L\#_\delta L'$ is given by starting with $L\sqcup L'$, removing $\delta_i[I\times\{0,1\}]$ and replacing it with $\delta_i[\{0,1\}\times I]$ for $i=1,\dots, n$.  Unlike knot concordance, band summing is not well defined on concordance.  The choice of bands can change the concordance class of the result.  However, it is not hard to see that if $L\#_\delta -L'$ is slice then $L$ and $L'$ are concordant.  The same proof applies to solvability.

\begin{proposition}\label{prop: solvability and interior sum}
Let $L$ and $L'$ be links and $n\in \frac{1}{2}\N$. If an exterior band sum $L\#_\delta -L'$ is $n$-solvable, then $L$ is $n$-solve-equivalent to $L'$.
\end{proposition}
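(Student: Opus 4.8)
The plan is to mimic the standard argument that shows band sums behave well with respect to concordance, and adapt it to the solve-equivalence setting. We are given an exterior band sum $L\#_\delta -L'$ that is $n$-solvable; by Proposition~\ref{prop: solvability in terms of disks}, this means there is a smooth $4$-manifold $W_0$ bounded by $S^3$ in which $L\#_\delta -L'$ bounds a disjoint union of smoothly embedded disks $\Delta_0$, with $H_1(W_0)=0$ and with $n$-Lagrangians and $n$-duals (and $(n+1)$-st derived series condition on the $X_i$ in the half-integer case) living in $W_0\sm\nu\Delta_0$. Our goal is to produce an $n$-solve-equivalence $(W,C)$ from the $S^3$ containing $L$ to the $S^3$ containing $L'$.

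The key geometric step is the following observation about exterior band sums. Inside $S^3$ one has the splitting $2$-sphere $S$ together with the bands $\delta_i$, each meeting $S$ in a single arc $\delta_i[I\times\{1/2\}]$. This data exhibits $L\#_\delta -L'$ as the result of a ``tubing'' operation: if we push the bands slightly into the $4$-ball $B^4$ bounded by $S^3$ on the side of the trivial picture, we can build a cobordism from the disjoint union $S^3_L \sqcup S^3_{-L'}$ (the two sides of $S$) to the single $S^3$ containing $L\#_\delta-L'$, equipped with $m$ embedded ``pair-of-pants'' annuli-with-a-band: concretely, take $\big((S^3_L\sqcup S^3_{-L'})\times I\big)$, and attach a $1$-handle connecting the two $S^3$ boundary components along a neighborhood of the arc where $\delta_i$ crosses $S$, doing this simultaneously for all $i$ in a way that carries $L_i$ and $-L'_i$ through the $1$-handle and out the far end as a single copy of $(L\#_\delta -L')_i$. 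Call this cobordism $P$; it has $H_*(P)=H_*(S^3)$ in positive degrees (a single $1$-handle attachment connecting two components produces a connected manifold with the homology of $S^3\times I$), and inside it lie $m$ disjoint annuli $A_i$ with one boundary on $L_i$, one on $-L'_i$, and one on $(L\#_\delta -L')_i$ — so really these are thrice-punctured spheres, not annuli.

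To finish, stack $W_0$ on top of $P$ along the $S^3$ containing $L\#_\delta -L'$, and cap off the disks $\Delta_0$: the union $A_i\cup_{(L\#_\delta-L')_i}\Delta_{0,i}$ is an annulus $C_i$ with boundary $L_i\cup -L'_i$, properly embedded in $W := P\cup_{S^3} W_0$, which is a cobordism from $S^3_L$ to $S^3_{L'}$. A Mayer--Vietoris computation gives $H_1(W)=0$ (using $H_1(P)=0$ and $H_1(W_0)=0$ and that the gluing $S^3$ is simply connected), and $H_2(W)\cong H_2(W_0)$, so the surfaces $\{X_i,Y_i\}$ from the $n$-solution, which already live in $W_0\sm\nu\Delta_0\subseteq W\sm\nu C$, form the required basis. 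Since $P\sm\nu A$ deformation retracts appropriately and $\pi_1$ of the gluing region is trivial, the fundamental group $\pi_1(W\sm\nu C)$ receives $\pi_1(W_0\sm\nu\Delta_0)$ compatibly with derived series, so conditions~\eqref{equiv lift} and the half-integer refinement are inherited directly from the $n$-solution. The case $n=0$ requires only the homological conditions, which the Mayer--Vietoris argument already supplies.

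The main obstacle I expect is checking carefully that the cobordism $P$ with its embedded surfaces $A_i$ really has the claimed homology and $\pi_1$-behavior — in particular that attaching the $1$-handle(s) does not introduce extra $H_2$ or obstruct the $H_1(W)=0$ computation, and that the ``thrice-punctured sphere'' $A_i$ glues to $\Delta_{0,i}$ to give an honest annulus rather than a higher-genus surface. This is where the transversality hypothesis ``$\delta_i$ meets $S$ in exactly the arc $\delta_i[I\times\{1/2\}]$'' is essential: it guarantees the bands can be pushed into $B^4$ so that the tubes are embedded and disjoint, making $P$ literally $(S^3\sqcup S^3)\times I$ with $m$ trivial $1$-handles attached (one handle if the $\delta_i$ can be isotoped to cross $S$ near a common point, or $m$ handles in general, which changes $H_1(P)$ to $0$ only after one checks the handles are attached to connect the two components — the first handle connects them, the remaining $m-1$ each contribute a canceling $H_1$--$H_2$ pair that must be shown to lie off the surfaces; alternatively arrange all crossings near one point so $P$ has exactly one $1$-handle). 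Once $P$ is pinned down, the rest is the routine stacking-and-Mayer--Vietoris bookkeeping that appears throughout this subject.
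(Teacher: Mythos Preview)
Your approach is essentially the same as the paper's: build a pair-of-pants cobordism $P$ from $S^3_L\sqcup S^3_{-L'}$ to the $S^3$ containing $L\#_\delta -L'$, then glue on the $n$-solution $W_0$ and cap the pants with the disks $\Delta_0$ to obtain annuli. The paper leaves the Mayer--Vietoris and $\pi_1$ bookkeeping as an exercise, exactly as you outline.

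The one place your write-up wobbles is the number of $4$-dimensional $1$-handles. Your instinct that $m$ $1$-handles would be problematic is correct: attaching $m$ $1$-handles between the two copies of $S^3\times I$ gives $H_1(P)\cong\Z^{m-1}$, which would kill the argument. Your proposed escape via a ``canceling $H_1$--$H_2$ pair'' is not right --- a $1$-handle attached to a connected $4$-manifold contributes only to $H_1$, there is nothing to cancel it. The correct fix is your alternative: use exactly \emph{one} $4$-dimensional $1$-handle to connect the two $S^3\times I$'s, so that the outgoing boundary is $S^3\#S^3\cong S^3$ containing $L\sqcup -L'$ separated by the connect-sum sphere $S$; then realize \emph{all} $m$ bands $\delta_i$ as $2$-dimensional $1$-handles attached to the product annuli $L\times I\sqcup (-L')\times I$ inside that outgoing $S^3$. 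This is precisely what the paper does, and with a single $1$-handle the cobordism $P$ has $H_1(P)=0$ and $H_2(P)=0$, so the stacking argument goes through without further complication.
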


\begin{proof}
Let $X, Y,$ and $Z$ be copies of $S^3$ containing $L, -L',$ and  $L\#_\delta -L'$, respectively.  Let $V$ be the 4-dimensional cobordism from $X\sqcup Y$ to $Z$ obtained by adding a single 1-handle to $X\times[0,1] \sqcup Y\times[0,1]$.  Construct a disjoint union of smoothly embedded pairs of pants $P$ in $V$ whose components are bounded by $L$, $-L'$, and $L\#_\delta -L'$ by starting with the disjoint union of annuli $L\times[0,1] \sqcup -L'\times[0,1]$ and adding 2-dimensional 1-handles along $\delta$ in $Z$.  

Since $L\#_\delta -L'$ is $n$-solvable, we appeal to Proposition~\ref{prop: solvability in terms of disks} to produce a 4-manifold $W$ in which $L\#_\delta -L'$ bounds a disjoint union of embedded disks $\Delta$. Observe that $P\cup \Delta\subseteq V\cup W$ is  a disjoint union of embedded annuli. We leave it as a straightforward exercise to the readers to verify that $\left(V\cup W, P\cup \Delta\right)$ is an $n$-solve-equivalence  from $L$ to $L'$.  
\end{proof}

\section{The Blanchfield form and $0.5$-solvability.}\label{sect: non 0.5 equiv to bdry}

We begin by recalling some details concerning the Alexander module, the Blanchfield form, and their interaction with $0.5$-solvability of a knot.  Given a knot $K$, the rational first homology of the infinite cyclic cover of its complement $S^3 \sm \nu K$ is a $\Lambda$-module where $\Lambda := \mathbb{Q}[t,t^{-1}]$ and 
the action of $t$ is induced by the deck transformation. This module is the (rational) \emph{Alexander module} of  $K$ and is denoted by $\A(K)$.  In the language of twisted coefficients, $\A(K) = H_1(S^3 \sm \nu K; \Lambda)$.  As in \cite[\S 7B]{Rolfsen}, the Alexander module can be presented in terms of the Seifert form.  More precisely, if $F$ is a Seifert surface for $K$ and $S^3 \sm \nu F$ is its complement, then we have the following exact sequence:
\begin{equation}\label{present A0}
\begin{tikzcd}
H_1(F;\Q)\otimes_\Q \Lambda \arrow{r}{\phi}& H_1(S^3 \sm \nu F;\Q)\otimes_\Q \Lambda \arrow[r, "j"] &\A(K) \arrow[r]& 0,
\end{tikzcd}
\end{equation}
where $\phi:=(i^+_*\otimes t)-(i^-_*\otimes \Id)$ and $i^+, i^-\colon F\to S^3 \sm \nu F$ are given by pushing $F$ off of itself in the positive and negative normal directions, respectively, and $j\colon H_1(S^3 \sm \nu F;\Q)\otimes_\Q \Lambda \to \A(K)$ is induced by lifting $S^3 \sm \nu F$ to the infinite cyclic cover of $S^3 \sm \nu K$.  Pick a symplectic basis $B$ for $H_1(F;\Q)$ and let $B'$ be the basis for $H_1(S^3 \sm \nu F;\Q)$ consisting of linking duals to $B$.  With respect to these bases, $i^+_*$ is given by the Seifert matrix $V_F$ for $F$ and $i^-_*$ is given by its transpose $V_F^T$.  Thus, $\A(K)$ is the $\Lambda$-module presented by the matrix $tV_F - V_F^T$.  

There is a nonsingular, $\Lambda$-sesquilinear form $$\Bl\colon\A(K)\times\A(K)\to \Q(t)/{\Lambda}$$ called the \emph{Blanchfield form}.  It is defined via the composition of Poincar\'e duality,  the Bockstein homomorphism, and the Kronecker evaluation map \cite{Blanchfield57}.  As in \cite{Kearton75, FrPo17}, the Blanchfield form can be expressed in terms of the Seifert form.  {Consider the basis of $H_1(S^3 \sm \nu F;\Q)\otimes_\Q \Lambda$ induced by $B'$ of the previous paragraph. With respect to this basis we express elements $r,s \in H_1(S^3 \sm \nu F;\Q)\otimes_\Q \Lambda  \cong \Lambda^{2g}$ as column vectors $\vec{r}(t), \vec{s}(t)$, where $g$ is the genus of $F$.}
Then 
\begin{equation}\label{Kearton Bl formula}
\Bl(r, s) = (1-t) \cdot \vec{r}\left(t^{-1}\right)^T\cdot \left (V_F-tV_F^T\right)^{-1}\cdot \vec{s}(t).
\end{equation}

We are now ready to prove our obstruction for a link to be $0.5$-solvable concordant to a homology boundary link.

\begin{proposition}\label{prop: Blanchfield obstructs HBL}
Let $L=L_1\cup L_2$ be $0.5$-solve-equivalent to a sublink of a homology boundary link and $\widetilde{L_2}$ be a lift of $L_2$ to the infinite cyclic cover of $S^3 \sm \nu L_1$.  If $[\widetilde{L_2}]\in \A(L_1)$ is the rational homology class of this lift, then $\Bl_{L_1}([\widetilde{L_2}],[\widetilde{L_2}])=0.$
\end{proposition}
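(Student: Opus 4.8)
The plan is to reduce the statement to two facts: (I) for an ordered $2$-component link $L=L_1\cup L_2$ with $\lk(L_1,L_2)=0$, the quantity $\Bl_{L_1}([\widetilde{L_2}],[\widetilde{L_2}])$ depends only on the $0.5$-solve-equivalence class of $L$; and (II) this quantity vanishes when $L$ is a boundary link. Granting (I) and (II): any cobordism as in Definition~\ref{defn:n-solvableEquiv} with $H_1=0$ preserves linking numbers, and a sublink of a homology boundary link has vanishing linking numbers, so $\lk(L_1,L_2)=0$; hence $L_2$ lifts to a knot in the infinite cyclic cover $X_\infty$ of $S^3\sm\nu L_1$, the class $[\widetilde{L_2}]\in\A(L_1)$ is defined, and since $\Bl_{L_1}$ is sesquilinear with $\overline{t^k}\,t^k=1$ the value is independent of the chosen lift. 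By Theorem~\ref{thm: HBL = BL}, $L$ is $0.5$-solve-equivalent to a boundary link, and then (I) together with (II) gives the claim. Fact (II) is immediate: if $L_1\cup L_2$ is a boundary link then $L_2$ bounds a Seifert surface in $S^3$ disjoint from $L_1$, so that surface lifts to $X_\infty$, whence $[\widetilde{L_2}]=0\in\A(L_1)$ and $\Bl_{L_1}(0,0)=0$.

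The content is (I), which I would prove by adapting the concordance argument of \cite{Davis-Park-2021} to the derived-series framework of \cite{COT03}. Let $(W, C_1\cup C_2)$ be a $0.5$-solve-equivalence from $L$ to $L'$ and set $\mathcal{W}:=W\sm\nu C_1$. Then $\partial\mathcal{W}$ is the closed $3$-manifold $N$ obtained by gluing $S^3\sm\nu L_1$ to $-(S^3\sm\nu L'_1)$ along their torus boundaries, $H_1(\mathcal{W};\Z)\cong\Z$ is generated by a meridian of $C_1$ (which restricts to $\mu_{L_1}$ and to $\mu_{L'_1}$ on the two pieces of $N$), and one computes $H_1(N;\Lambda)\cong\A(L_1)\oplus\A(L'_1)$ carrying the Blanchfield form $\Bl_{L_1}\oplus(-\Bl_{L'_1})$. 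The annulus $C_2$ is disjoint from $C_1$, has $\partial C_2=L_2\cup -L'_2$, and lifts to the infinite cyclic cover $\mathcal{W}_\infty$ because $\lk(L_1,L_2)=\lk(L'_1,L'_2)=0$; so $[\widetilde{L_2}]$ and $[\widetilde{L'_2}]$ agree in $H_1(\mathcal{W};\Lambda)$, that is, $([\widetilde{L_2}],[\widetilde{L'_2}])$ lies in $P:=\ker\big(\A(L_1)\oplus\A(L'_1)\to H_1(\mathcal{W};\Lambda)\big)$.

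Now the $0.5$-conditions enter: the Lagrangians $X_i$ of $W$ have $\pi_1$ mapping into the commutator subgroup, a condition that persists in $\mathcal{W}$, so the $X_i$ lift to $\mathcal{W}_\infty$; by the Poincar\'e--Lefschetz duality argument of \cite{COT03}, suitably adapted, the lifted $X_i$ span a metabolizer for the $\Lambda$-intersection form on $H_2(\mathcal{W};\Lambda)$. By the standard passage from that form to the Blanchfield form of $\partial\mathcal{W}=N$, the subspace $P$ is a metabolizer — in particular an isotropic subspace — for $\Bl_{L_1}\oplus(-\Bl_{L'_1})$; evaluating it on $([\widetilde{L_2}],[\widetilde{L'_2}])\in P$ yields $\Bl_{L_1}([\widetilde{L_2}],[\widetilde{L_2}])=\Bl_{L'_1}([\widetilde{L'_2}],[\widetilde{L'_2}])$, which is (I).

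The step demanding the most care is the metabolicity of the $\Lambda$-intersection form on the \emph{whole} of $H_2(\mathcal{W};\Lambda)$: over $\Lambda$ there can be more second homology (both free and torsion) than over $\Q$, and the $0.5$-hypothesis — not mere $0$-solvability — is exactly what controls it. That this cannot be avoided is shown by Theorem~\ref{thm: n=0}, which exhibits $0$-solvable links with $\Bl_{L_1}([\widetilde{L_2}],[\widetilde{L_2}])\neq0$, so the analogue of (I) fails at the $0$-level. A lesser point is the appeal to Theorem~\ref{thm: HBL = BL}; circumventing this forward reference would require constructing the metabolizer for $\Bl_{L_1}$ directly from the description of a sublink of a homology boundary link as, up to concordance, a fusion of a boundary link \cite{CocLev91}.
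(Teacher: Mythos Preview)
Your proposal is correct and takes a genuinely different route from the paper's own proof. Both arguments open the same way---invoking Theorem~\ref{thm: HBL = BL} to reduce to the case of $0.5$-solve-equivalence to a boundary link---but then diverge. The paper proceeds geometrically: it first arranges (by connect-summing with $-J_1$) that the boundary link has a slice first component, caps off to produce an honest $0.5$-solution $W'$ for $L_1$, and then works with a Seifert surface $F\subset M_{L_1}$, a bounding $3$-manifold $R\subset W'$, and ambient surgery on $R$ to show that $[L_2]\in H_1(S^3\sm\nu F;\Q)$ lies in the span of the linking-dual half of a metabolic basis; the vanishing of $\Bl_{L_1}([\widetilde{L_2}],[\widetilde{L_2}])$ is then read off from the explicit Kearton formula~\eqref{Kearton Bl formula}. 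Your approach is the homological one: you prove directly that $\Bl_{L_1}([\widetilde{L_2}],[\widetilde{L_2}])$ is a $0.5$-solve-equivalence invariant by exhibiting $([\widetilde{L_2}],-[\widetilde{L'_2}])$ in the kernel $P=\ker\big(H_1(N;\Lambda)\to H_1(\mathcal W;\Lambda)\big)$ and invoking the \cite{COT03} machinery to see that $P$ is self-annihilating for $\Bl_{L_1}\oplus(-\Bl_{L'_1})$. This is more conceptual, avoids the reduction to $J_1$ slice, and cleanly isolates where the $0.5$-hypothesis enters (the $X_i$ must lift to $\mathcal W_\infty$). The cost is that one must verify that $\mathcal W=W\sm\nu C_1$ really satisfies the hypotheses needed for the kernel-is-self-annihilating theorem: in particular that the inclusion-induced map $H_1(N)\to H_1(\mathcal W)$ is an isomorphism onto $\Z$ and that $H_2(\mathcal W)\cong H_2(W)$ is still based by the $X_i,Y_i$. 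Both follow from a Mayer--Vietoris computation (the peripheral torus bounds in $N$, so contributes nothing new to $H_2(\mathcal W)$), but you should say this explicitly rather than leave it implicit. Finally, for your purposes you only need $P$ to be \emph{isotropic}, not a full metabolizer, so the word ``metabolizer'' overstates what you use.
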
 
\begin{proof}

 By Theorem~\ref{thm: HBL = BL}, whose proof appears in Section~\ref{sect: HBL = BL for 0.5}, being $0.5$-solve-equivalent to a sublink of a homology boundary link is equivalent to being $0.5$-solve-equivalent to a boundary link. Therefore, we may assume that $L$ is $0.5$-solve-equivalent to a boundary link $J=J_1\cup J_2$.   Since linking number is an invariant of $0$-solve-equivalence and vanishes for boundary links, it follows that $\lk(L_1,L_2)=0$.  Thus, $L_2$ lifts to a knot $\widetilde{L_2}$ in the infinite cyclic cover of $S^3 \sm \nu L_1$.

Next, we show that it is enough prove the proposition under the additional assumption that $J_1$ is slice.
    Let $L' = L_1' \cup L_2' := (L_1\#-J_1)\cup L_2$ and $J' =J_1' \cup J_2' := (J_1\#-J_1)\cup J_2$.  Observe that $J'$ is still a boundary link and $L'$ is $0.5$-solve-equivalent to $J'$.   Let $\widetilde{L'_2}$ be the lift of $L'_2$ in the infinite cyclic cover of $S^3 \sm \nu L'_1$. As the Blanchfield form splits under connected sum of knots,  
$$\Bl_{L_1'}([\widetilde{L_2'}],[\widetilde{L_2'}]) = \Bl_{L_1}([\widetilde{L_2}],[\widetilde{L_2}]).$$  
Hence, we may assume  that $J_1$ is slice.

Assume that $L=L_1\cup L_2$ is $0.5$-solve-equivalent to a boundary link $J=J_1\cup J_2$ and $J_1$ is slice.  In particular $L_1$ is $0.5$-solvable.
  Since $\lk(L_1,L_2)=0$, there is a Seifert surface $F$ for $L_1$ which is disjoint from $L_2$.  As in \cite[Theorem 9.2]{COT03}, if $g$ is the genus of $F$, then there is a symplectic basis $\mathcal B = \{a_1,\dots, a_g, b_1,\dots, b_g\}$ for $H_1(F)\cong \Z^{2g}$ with respect to which the Seifert form is $$\begin{pmatrix} 0&A\\B&C\end{pmatrix}$$ for some $g\times g$ square matrices $A$, $B$, and $C$.

Let $\mathcal B'=\{a_1',\dots, a_g', b_1',\dots, b_g'\}$ be the basis for $H_1(S^3 \sm \nu F)\cong \Z^{2g}$ given by linking duals to elements of $\mathcal B$. By \pref{Kearton Bl formula} the Blanchfield pairing with respect to the basis of $H_1(S^3 \sm \nu F;\Q)\otimes_\Q \Lambda$ induced by $B'$ is given by
\begin{equation}\label{Metabolizer}
\Bl(r, s) = (1-t)\vec{r}\left(t^{-1}\right)^T \begin{pmatrix}
-\left(A-tB^T\right)^{-1}\left(C-tC^T\right)\left(B-tA^T\right)^{-1}&\left(A-tB^T\right)^{-1}
\\
\left(B-tA^T\right)^{-1}&0
\end{pmatrix} \vec{s}(t).
\end{equation}
In particular, we have that $\Bl_{L_1}(j(b_k'\otimes 1), j(b_\ell'\otimes 1))=0$ for each $k,\ell$.  

Since $j\colon H_1(S^3 \sm \nu F)\to \A(K)$ is induced by lifting $S^3 \sm \nu F$ to the infinite cyclic cover of $S^3 \sm \nu K$, it is enough to show that $[L_2]\in H_1(S^3 \sm \nu F; \Q)$ is in the span of $b_1',\dots, b_g'$.  Moreover, since the basis $\mathcal B'$ is the linking dual of $\mathcal B$, it is enough to show that $\lk(L_2, a_i)=0$ for each~$i$.  To do so, we will find suitable rational 2-chains in a 0.5-solution for $L_1$ bounded by $L_2$ and $a_i$.

Let $(W,C=C_1\cup C_2)$ be a $0.5$-solve-equivalence from $L$ to $J$ and $\Delta\subseteq B^4$ be a slice disk for $J_1$.    Capping off the boundary component of $W$ containing $J$ with $B^4$ and capping off the boundary component of $C_1$ containing $J_1$ with $\Delta$, we produce a 4-manifold $W\cup B^4$ containing a disk $D=C_1\cup \Delta$ bounded by $L_1$.  As in the proof of Proposition~\ref{prop: solvability in terms of disks}, we see that $W':=(W\cup B^4) \sm \nu D$ is a 0.5-solution for $L_1$.

If $X_1,\dots, X_k, Y_1, \dots, Y_k\subseteq W$ are the 1-Lagrangians and $0$-duals for the $0.5$-solve-equivalence $(W,C)$, then they also serve as $1$-Lagrangians and 0-duals in the 0.5-solution $W'$. Let $\widehat{F}\subseteq \bdry W' = M_{L_1}$ be the closed surface given by capping the Seifert surface $F$ for $L_1$ with a pushoff of the disk $D$.  Since $\widehat F$ is disjoint from the Lagrangians $X_i$ and the duals $Y_i$, it follows that $[\widehat F]=0$ in $H_2(W')$.  Let $R\subseteq W'$ be a compact oriented 3-manifold bounded by $\widehat F$.  

Next we follow the same strategy as used in  \cite[Theorem 9.2]{COT03}, in order to separate $R$ from each of $X_1, \dots, X_k$.  First, arrange that $R$ is transverse to $X_i$, so that $R\cap X_i=\alpha$ is a 1-manifold.  Since $R$ is dual to the generator for $H_1(W')$ and since each $X_i$ lifts to the infinite cyclic cover, $[\alpha]=0$ in $H_1(X_i)$.  Thus, $\alpha$ is the oriented boundary of a collection of nested surfaces $S$ in $X_i$. Let $S'$ be the closure of an innermost component of  $S'$.  Remove from $R$ a neighborhood of the boundary of $S'$ and replace it with the boundary of a tubular neighborhood of $S'$.  This procedure, referred to as ambient surgery, reduces the number of components of $X_i\cap R$.  Iterate until $X_i\cap R = \emptyset$ for all $i$. 

A standard ``half lives, half dies'' argument reveals there are nonseparating disjoint simple closed curves $a_1,\dots, a_g\subseteq F$ which bound rational 2-chains  $A_1,\dots, A_g$ in $R$.  See for example \cite[\S 8E, Lemma 18]{Rolfsen}. Let $A_i^+$ be the result of pushing $A_i$ off $R$ in the positive normal direction.  These are used in \cite[Theorem 9.2]{COT03} to show that $\lk(a_i, a_j^+) =0$ by arguing that the linking number agrees with the signed count of intersections points, denoted by $A_i\cdot A_j^+$, which is zero since $A_i$ and $A_j^+$ are disjoint. We use a similar strategy to show that $\lk(L_2, a_i) = 0$ for each $i$.

As $J$ is a boundary link, let $G_1$ and $G_2$ be a pair of disjoint surfaces bounded by $J_1$ and $J_2$.  Let $\Sigma = C_2\cup G_2\subseteq W'$.  As $G_1$ and $G_2$ are disjoint, $\pi_1(G_2)$ sits in $\pi_1(S^3 \sm \nu J_1)^{(1)}$.  It follows that $\pi_1(\Sigma)$ sits in $\pi_1(W')^{(1)}$.  Additionally, since $C_2$ is disjoint from the Lagrangians and duals $X_1,Y_1, \dots, X_k, Y_k$, which form a basis for $H_2(W')$, it follows that $[\Sigma]=0$ in $H_2(W', \bdry W')$.  

In the following paragraph, we arrange that $R$ is disjoint from $\Sigma$.  We do so by  imitating the ambient surgery argument used in \cite[Theorem 9.2]{COT03} to get $R \cap X_i=\emptyset$ for each $i$. We isotope if needed to arrange that $\Sigma$ intersects $R$ transversely, so  $\Sigma\cap R=\alpha$ is a 1-manifold.  If $[\alpha]\neq 0$ in $H_1(\Sigma)$, then there is some $[\beta]\in H_1(\Sigma)$ with $\alpha\cdot \beta\neq 0$.  When regarded as an element of $H_1(W)$, $[\beta]\cdot [R]\neq 0$.   This contradicts the fact that $\pi_1(\Sigma)\subseteq \pi_1(W')^{(1)}$. Thus $\alpha$ is the oriented boundary of a collection of nested surfaces $S$ in $\Sigma$.  These surfaces, being surfaces with boundary, have trivial normal bundles in $W'$. Just as we did with $X_i$, we use these to modify $R$ by ambient surgery.  More precicely, if $S'$ is the closure an innermost component of $S$ then remove from $R$ a neighborhood of $\bdry S'$ and replace it with the boundary of a tubular neighborhood of $S'$.  Iterate until $R\cap \Sigma=\emptyset$.

For each $i$, since $R\cap \Sigma=\emptyset$ and $A_i\subseteq R$, we have $\Sigma\cap A_i=\emptyset$.  Let $A_i'\subseteq S^3 \sm L_1$ be a Seifert surface for $a_i$.  Then $[A_i\cup -A_i']\in H_2(W';\Q)$.  Since $[\Sigma]=0$ in $H_2(W', \bdry W')$, we have
$$
0=\Sigma \cdot [A_i\cup -A_i'] = \Sigma\cdot A_i - L_2\cdot A_i' = 0-\lk(L_2, a_i).
$$
As $\lk(L_2, a_i)=0$ for each $i$, we conclude that $\Bl_{L_1}([\widetilde{L_2}], [\widetilde{L_2}])=0$. \end{proof}

Now, we prove Theorem~\ref{thm: n=0}, whose statement we recall.

\begin{reptheorem}{thm: n=0}
For each positive integer $k$, there exists a 0-solvable link $L$ with slice components such that
\begin{enumerate}[font=\upshape]
\item\label{item:vanishingMuFor0.5} $\mu_I(L) = 0$ for each multi-index $I$ with $|I|< k$, and 
\item $L$ is not $0.5$-solve-equivalent to any sublink of a homology boundary link.
\end{enumerate}
\end{reptheorem}
\begin{proof}

We will recursively construct a sequence of 2-component links $L^k = L^k_1\cup L^k_2$ with the following properties. First, the link $L^k$ has slice components and vanishing Milnor's invariants of length less than $k$ for each $k$. Moreover, $L^k_2$ lifts to a knot $\widetilde {L^k_2}$
 in the infinite cyclic cover of $S^3 \sm L^k_1$ and its rational homology class $[\widetilde {L^k_2}]\in \A(L^k_1)$ satisfies $\Bl_{L^k_1}([\widetilde {L^k_2}],[\widetilde {L^k_2}])\neq 0.$

 For the base case, we take $L^4$ to be the link in Figure~\ref{fig: 0.5 example}.  Note that each component of $L^4$ is slice and $L^4$ has vanishing Milnor's invariants of length less than $4$ since it has vanishing pairwise linking number. Moreover, note that $[\widetilde{L_2^4}]$ generates $\A(L_1^4)$. The nonsingularity of the Blanchfield form implies that $\Bl_{L_1^4}([\widetilde{L_2^4}],[\widetilde{L_2^4}])\neq 0$. 
 
 Suppose now that we have constructed $L^4, \dots, L^k$.  According to \cite[Theorem 3.3]{Cochran91} there exists a Brunnian link $B^k=B^k_1\cup B^k_2$ with vanishing Milnor's invariants of length less than $k$ and for which all Milnor's invariants of length $k$ agree with those of $L^k$. Define $L^{k+1}$ to be an exterior band sum of $L^k$ and $-B^k$, the reverse of the mirror image of $B^k$. Since first non-vanishing Milnor's invariants are additive under exterior band sum \cite[Theorem 8.12]{C4}, $L^{k+1}$ has vanishing Milnor's invariants of length less than $k+1$.

As $L_1^{k+1}$ is the result of banding together $L_1^{k}$ and an unknot, it follows that $L_1^{k+1}$ is isotopic to $L_1^{k}$, and so is slice.  We will now conflate $L_1^{k}$ and $L_1^{k+1}$. Similarly, $L_2^{k+1}$ is also slice.  Since $\lk(B^k_1,B^k_2)=0$ and $B^k_1$ is unknotted, $B^k_2$ is nullhomotopic in $S^3 \sm B^k_1$.  Thus, $L_2^{j+1}$ is homotopic to $L_2^{k}$ in $S^3 \sm L_1^{k} = S^3 \sm L_1^{k+1}$.  In particular, for a suitable choice of lifts, we have that $[\widetilde{L_2^k}] = [\widetilde{L_2^{k+1}}]$ in $\mathcal{A}(L_1^{k}) = \mathcal{A}(L_1^{k+1}).$  We conclude that 
$$\Bl_{L_1^k}([\widetilde{L_2^k}],[\widetilde{L_2^k}]) =\Bl_{L_1^{k+1}}([\widetilde{L_2^{k+1}}],[\widetilde{L_2^{k+1}}]) \neq 0.$$  
Thus, by Proposition~\ref{prop: Blanchfield obstructs HBL}, $L$ is not $0.5$-solve-equivalent to a sublink of a homology boundary link.  

  We obtained a link $L^{k}$ with slice components which has vanishing Milnor's invariants for each multi-idex $I$ with $|I|<k$ and is not $0.5$-solve-equivalent to any sublink of a homology boundary link. Once $k>4$, the $0$-solvability of $L^{k}$ follows from \cite[Theorem 1]{Martin22}.
\end{proof}

 \section{C-complexes, metabolic linking forms, and 0.5-solvability} \label{sect: C-cplx}
 
  Our proof of Theorem~\ref{thm: HBL = BL} requires the theory of C-complexes and their linking forms, first defined by Cooper \cite{Cooper82} and studied extensively by Cimasoni \cite{Cimasoni04}.  Many invariants which obstruct $0.5$-solvability can be computed in terms of a C-complex, see for example \cite{Cooper82, Cimasoni04, CiF, Conway18, MelMel03, DNOP20}, amongst others.  We are interested in a result of the opposite form.  We  produce a condition on a C-complex which {implies} 0.5-solvability. First, we recall the definition of a C-complex.
  
  \begin{figure}[h]
\begin{picture}(150,80)
%\put(470,130){*}
\put(0,0){\includegraphics[height=.1\textheight]{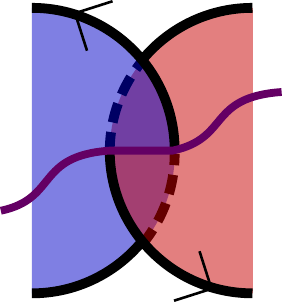}}
\put(90,0){\includegraphics[height=.1\textheight]{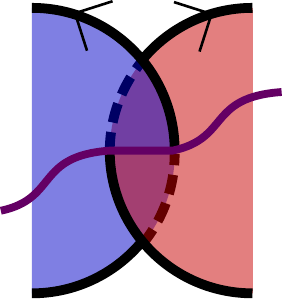}}

\end{picture}\vspace{.5cm}
\caption{A positive (left) and negative (right) clasp intersection with a loop passing over it.}\label{fig:clasp}
\end{figure} 

\begin{definition}[\cite{Cimasoni04}]
Let $L=L_1\cup\dots\cup L_n$ be an $n$-component link and $F=F_1\cup\dots\cup F_n$ be a union of compact oriented smoothly embedded surfaces in $S^3$. We say $F$ is a \emph{C-complex} for $L$ if it satisfies the following:
\begin{itemize}
\item for any $i\neq j$, surfaces $F_i$ and $F_j$ intersect transversely in a (possibly empty) union of embedded arcs each having one endpoint in $\bdry F_i$ and the other in $\bdry F_j$.  Such an arc is called a \emph{clasp}.
\item $F_i\cap F_j\cap F_k = \emptyset$ for all $i<j<k$. 
\item $\bdry F_i = L_i$ for each $i$.
\end{itemize}\end{definition}

A clasp $c$ in a C-complex $F$ has a \emph{sign}, denoted by $\sigma(c)\in \{\pm 1\}$, which is determined by the sign of either of the intersections between $L$ and $F$ at the end points of $c$  (see Figure~\ref{fig:clasp}).  A closed curve $\ell$ (possibly not embedded) sitting on a C-complex is called a \emph{loop} if the intersection of a neighborhood of each clasp with $\ell$ is a (possibly empty) union of embedded arcs each intersecting the clasp as in Figure~\ref{fig:clasp}. Given a loop $\ell$ on a C-complex $F=F_1\cup\dots\cup F_n$ and a tuple $\epsilon = (\epsilon_1,\dots, \epsilon_n)\in \{\pm1\}^n$, a new closed curve $\ell^\epsilon\subseteq S^3 \smallsetminus F$ is obtained by pushing the part of $\ell$ sitting in $F_i$ off in the $\epsilon_i$-normal direction and interpolating at each clasp.  See for example~\cite{CiF} for a more detailed discussion.  Any closed curve curve on a C-complex is homotopic to a loop. Hence for any $\epsilon = (\epsilon_1,\dots, \epsilon_n)\in \{\pm1\}^n$, there is a linking form $V^\epsilon\colon H_1(F)\times H_1(F)\to \Z$ given by setting $V^\epsilon([\alpha], [\beta]) = \lk(\alpha, \beta^\epsilon)$, when $\alpha$ and $\beta$ are embedded loops in $F$ and then extending it linearly.

We are now ready to present our sufficient condition for $0.5$-solvability.  

\begin{definition}\label{defn: pairing}
Let $F = F_1\cup \dots\cup F_n$ be a C-complex and $c^+$ and $c^-$ be clasps of opposite sign in $F_i\cap F_j$.   An embedded loop $\ell$ in a C-complex $F$ is said to \emph{pair} $c^+$ and $c^-$ if 
\begin{itemize}
\item $\ell$ passes once over $c^+$ and once over $c^-$, 
\item $\ell$ is disjoint from all other clasps, and 
\item $\ell\cap F_i$ and $\ell\cap F_j$ separate $F_i$ and $F_j$, respectively.
\end{itemize} A collection of disjoint loops $\ell_1\cup \dots \cup \ell_m$ is called a \emph{complete pairing} for $F$ if each $\ell_i$ pairs a positive and negative clasp in $F$ and every clasp is contained in exactly one $\ell_i$.  
\end{definition}

\begin{definition}\label{defn: metabolic}
A C-complex $F=F_1\cup\dots\cup F_n$ is said to admit a \emph{metabolic linking form}  if
\begin{itemize}
\item 
There is a nonseparating collection of disjoint embedded simple closed curves $\{\alpha_i\}_{1\leq i \leq g}$ where $g =g(F) = \sum_{i=1}^n g(F_i)$ is the sum of the genera of the component of $F$.  
\item There is a complete pairing $\ell_1\cup \dots \cup \ell_m$ for $F$ disjoint from $\alpha_1\cup \dots \cup \alpha_g$.
\item For each $\epsilon\in \{\pm1\}^n$, the linking form $V^\epsilon$ vanishes on  $$\left\{[\alpha_i], [\ell_j]\right\}_{1\leq i \leq g, 1\leq j \leq m} \subseteq H_1(F).$$
\end{itemize}
The union of curves $\alpha_1\cup \dots \cup \alpha_g$ forms a link which we will call a \emph{derivative} for $L$. 
\end{definition}
The use of the word ``derivative'' in Definition~\ref{defn: metabolic} is inspired by \cite{derivatives}.  For illustration purposes, an example appears in Figure~\ref{fig:derivative}.

   \begin{figure}[h]
\begin{picture}(210,110)

\put(0,0){\includegraphics[width=.45\textwidth]{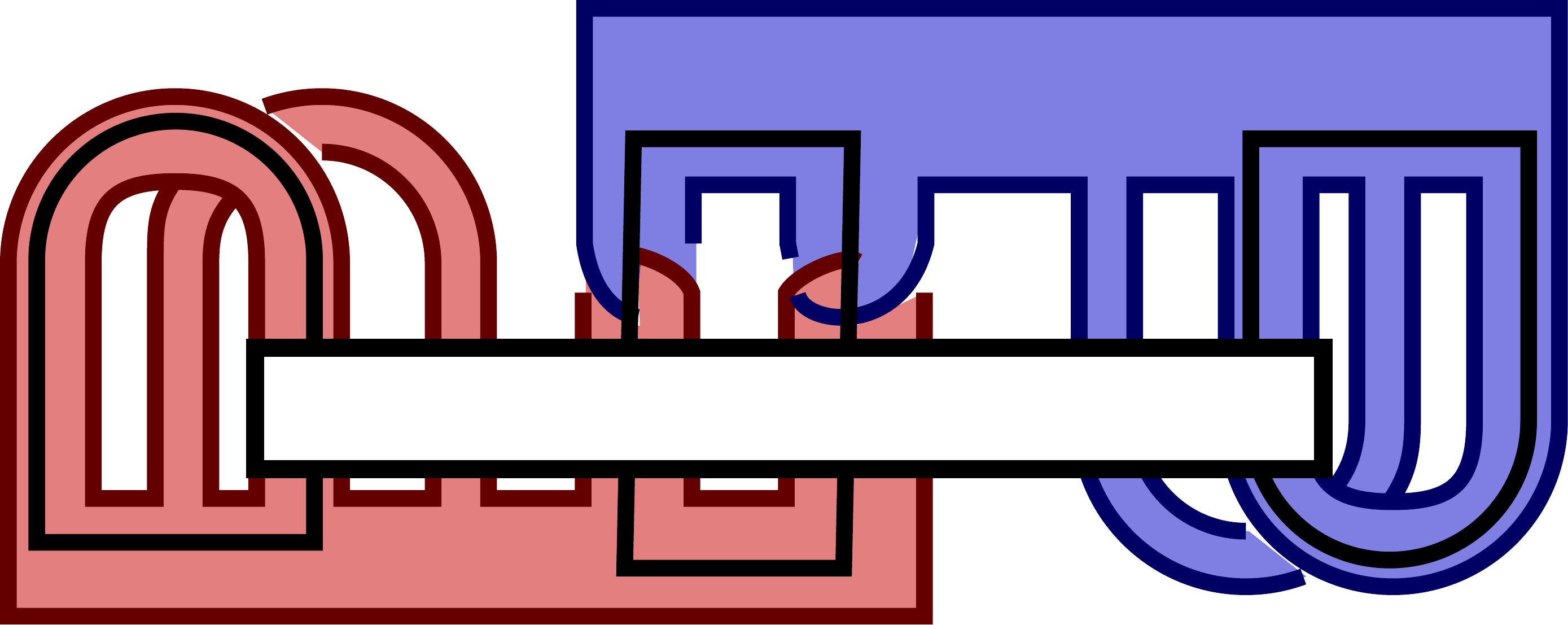}}
\put(-3,60){\large$\alpha$}
\put(195,0){\large $\beta$}
\put(70,5){\large$\gamma$}
\put(95,25){\large $*$}
\end{picture}\vspace{.5cm}
\caption{A C-complex $F$ with curves $\alpha, \beta, \gamma$.  {The $*$ indicates an arbitrary framed 6-component string link.}  The curve $\gamma$ forms a pairing for $F$.  If the linking numbers between $\alpha$, $\beta$, $\gamma$ and their pushoffs $\alpha^\epsilon$, $\beta^\epsilon$, $\gamma^\epsilon$ all vanish then $F$ has metabolic linking form.  }\label{fig:derivative}
\end{figure}  

\begin{theorem}\label{thm: metabolic C-cplx implies 0.5-sol}
A link which admits a C-complex with metabolic linking form is $0.5$-solvable.
\end{theorem}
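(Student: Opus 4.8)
The plan is to imitate, in the setting of C-complexes, the construction of a $0.5$-solution for a knot with metabolic Seifert form (\cite[\S 9 and the proof of Theorem 1.1]{COT03}): push the spanning surface into $B^4$ and use the metabolizer to surger it down to a slice disk inside a $4$-manifold with the required homological properties. Concretely, let $F=F_1\cup\dots\cup F_n$ be a C-complex for $L$ with metabolic linking form, with derivative $\alpha_1\cup\dots\cup\alpha_g$ and complete pairing $\ell_1\cup\dots\cup\ell_m$. I will build a smooth $4$-manifold $W$ with $\partial W=S^3$ and $H_1(W)=0$, in which $L$ bounds a disjoint union of smoothly embedded disks $\Delta$, together with a basis of $H_2(W\sm\nu\Delta)$ represented by Lagrangian/dual surface pairs $\{X_i,Y_i\}$ with $\pi_1(X_i)\to\pi_1(W\sm\nu\Delta)^{(1)}$; by Proposition~\ref{prop: solvability in terms of disks} this proves $L$ is $0.5$-solvable.

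The first step is to push $F$ into $B^4$ and delete the clasps. The point is that each pairing loop $\ell_j$ is a Whitney circle for the two opposite-sign clasps it pairs, so after pushing $F$ in one performs the corresponding Whitney moves. The remaining hypotheses on the pairing are precisely what makes these moves clean: that $\ell_j\cap F_i$ and $\ell_j\cap F_j$ each separate, together with the vanishing of all the $V^\epsilon$ on $[\ell_j]$ and on the pairs $\{[\ell_j],[\ell_{j'}]\}$ and $\{[\ell_j],[\alpha_i]\}$, lets the Whitney disks be taken embedded, correctly framed, mutually disjoint, and disjoint from the derivative. Performing all $m$ Whitney moves turns the pushed-in complex into a disjoint union $\hat F_1\sqcup\dots\sqcup\hat F_n\subseteq B^4$ of embedded surfaces with $\partial\hat F_i=L_i$ and $g(\hat F_i)=g(F_i)$, still carrying push-offs of the derivative curves; and one checks the vanishing $V^\epsilon([\alpha_i],[\alpha_j])=0$ survives these moves, i.e.\ the framing of each $\alpha_i$ induced by $\hat F$ is $0$ and the $\alpha_i$ are pairwise unlinked. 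In effect this step asserts that a C-complex with metabolic linking form ``is'' a disjoint union of surfaces in $B^4$ bounded by $L$ whose Seifert form is metabolic.

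Next I would surger $\hat F_1\sqcup\dots\sqcup\hat F_n$ down to disks along the derivative. Pushing everything into the interior of $B^4$ and performing $4$-dimensional surgery on $B^4$ along $\alpha_1,\dots,\alpha_g$ leaves the boundary equal to $S^3$ and, since the $\alpha_i$ are null-homologous in $B^4$, gives $H_1(W)=0$; because each $\alpha_i$ is nonseparating on $\hat F$ and, crucially, has vanishing $\hat F$-induced framing, each $\hat F_i$ is converted into an embedded disk $\Delta_i$ in the resulting $W$ (it is exactly the metabolicity $V^\epsilon([\alpha_i],[\alpha_i])=0$ that makes these surgered surfaces embedded rather than merely immersed). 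The surgery along $\alpha_i$ contributes to $H_2(W\sm\nu\Delta)$ a $2$-sphere $X_i$ with $X_i\cdot X_j=0$, and a curve $\beta_i$ on $\hat F_i$ completing $\alpha_i$ to a symplectic basis yields a dual surface $Y_i$ with $X_i\cdot Y_i=1$; so $\{X_i,Y_i\}$ spans a unimodular summand and, one checks, a basis of $H_2(W\sm\nu\Delta)$. After the usual tubing arguments (removing excess intersections, including with the disks $\Delta$) and a base change of the $Y_i$ by multiples of the $X_j$, the family $\{X_i,Y_i\}$ can be taken pairwise disjoint except for a single transverse intersection of $X_i$ with $Y_i$. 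Since each $X_i$ is a sphere, $\pi_1(X_i)=1$ lies in $\pi_1(W\sm\nu\Delta)^{(1)}$, so $W\sm\nu\Delta$ is a $0.5$-solution.

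I expect the main obstacle to be the clasp removal of the second step: one must verify carefully that the Whitney moves along the $\ell_j$ genuinely produce a disjoint union of embedded surfaces and that every vanishing condition on the derivative persists through these moves, so that the genus-reducing surgeries of the third step go through and produce a geometrically standard hyperbolic basis. Put differently, the real content is the reduction of a metabolic C-complex to a disjoint union of surfaces in $B^4$ bounded by $L$ with metabolic Seifert form; once that is in hand the argument reduces to the (by now routine) knot/boundary-link case.
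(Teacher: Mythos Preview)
Your strategy is genuinely different from the paper's. The paper avoids $4$-dimensional surgery entirely and argues in dimension $3$ via double-delta moves. Since $0.5$-solvability is preserved by double-delta moves, it suffices to show $L$ is double-delta equivalent to a slice link. The metabolic hypothesis says exactly that $S=D\cup P$ (derivative $\cup$ pairing) has all pairwise linking numbers zero; presenting $S$ as the closure of a string link $T$, one performs delta moves on a trivial string link to produce $T^{-1}$, and carrying these out on bands of $F$ near $S$ induces double-delta moves on $L$. After the moves the new $D'\cup P'$ closes up to the slice link $\widehat{T*T^{-1}}$. Cutting the modified C-complex along neighborhoods of the pairing loops removes every clasp (replacing it by a Bing-double boundary component) and cutting along the derivative curves makes each piece planar (introducing $0$-framed-cable boundary components); capping with slice disks for these Bing doubles and cables gives slice disks for the modified link $L'$.

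Your $4$-dimensional route has a gap, and it lies in step~3 rather than step~2. The belt sphere $\{0\}\times S^2\subset (D^2\times S^2)_i$ is \emph{not} disjoint from $\Delta$: the surgered surface meets each surgery region in the two capping disks $D^2\times\{p_\pm\}$, so $\{0\}\times S^2$ hits $\Delta$ transversely in the two points $(0,p_\pm)$. The class $[X_i]\in H_2(W\smallsetminus\nu\Delta)$ is instead represented by a core disk $D^2\times\{q\}$ glued to a surface $G_i\subset B^4\smallsetminus\hat F$ bounded by a pushoff $\alpha_i^+$, and there is no reason for $G_i$ to be a disk: nothing in the metabolic hypothesis bounds its genus or forces $\pi_1(G_i)$ into $\pi_1(W\smallsetminus\nu\Delta)^{(1)}$. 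Already in the single-knot case the derivative $\alpha_1\cup\dots\cup\alpha_g$ can be an arbitrary link with vanishing pairwise linking numbers, so one cannot expect the $X_i$ to be spheres. This is precisely why the paper first applies double-delta moves to make $D\cup P$ \emph{slice} before any capping is attempted; your outline has no analogue of that step, and without it the assertion ``since each $X_i$ is a sphere, $\pi_1(X_i)=1$'' is unsupported.
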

\begin{proof}
Let $L$ be a link with  a C-complex $F$ which has a metabolic linking form.  Let $D$ be a derivative and $P$ a complete pairing as in Definition~\ref{defn: metabolic}. We will use the fact that $S:=D\cup P$ is a link with vanishing pairwise linking numbers to guide a sequence of double-delta moves which reduces $L$ to a slice link.  This will complete the proof since the 0.5-solvability of a link is preserved under the double-delta move~\cite[Proposition 7.2]{MartinThesis}.  See Figure~\ref{fig:doubleDelta} for the delta and double-delta moves.

   \begin{figure}[h]
\begin{picture}(580,75)
%\put(400,55){*}
\put(0,0){\includegraphics[width=.15\textwidth]{delta1.pdf}}
\put(73,20){$\longleftrightarrow$}
\put(100,0){\includegraphics[width=.15\textwidth]{delta2.pdf}}

\put(215,0){\includegraphics[width=.2\textwidth]{double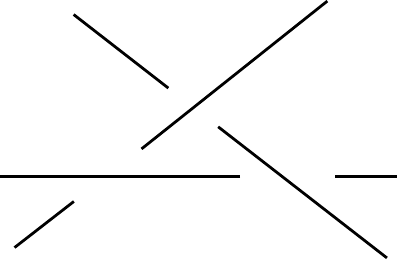}}
\put(312,25){$\longleftrightarrow$}
\put(350,0){\includegraphics[width=.2\textwidth]{double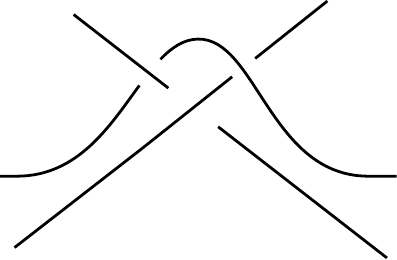}}

\put(205,15){\tiny{$L_i$}}
\put(205,25){\tiny{$L_i$}}
\put(210,5){\textcolor{blue}{\tiny{$L_j$}}}
\put(216,-3){\textcolor{blue}{\tiny{$L_j$}}}
\put(218,50){\textcolor{red}{\tiny{$L_k$}}}
\put(224,58){\textcolor{red}{\tiny{$L_k$}}}

\put(340,15){\tiny{$L_i$}}
\put(340,25){\tiny{$L_i$}}
\put(345,5){\textcolor{blue}{\tiny{$L_j$}}}
\put(351,-3){\textcolor{blue}{\tiny{$L_j$}}}
\put(353,50){\textcolor{red}{\tiny{$L_k$}}}
\put(357,58){\textcolor{red}{\tiny{$L_k$}}}
\end{picture}
\vspace{.3cm}
\caption{Left:  The delta move.  Right: The double delta move, it is not required that $i$, $j$, and $k$ be distinct.  }\label{fig:doubleDelta}
\end{figure}

Our argument passes through the setting of string links.  For each component $\alpha_i$ of the derivative $D$, let $\Delta_i$ be a disk disjoint from $F$ except that it intersects $F$ in an arc which crosses $\alpha_i$ transversely in a single point and is otherwise disjoint from $S$.  Similarly, for each component $\ell_k$ of the complete pairing $P$, let $E_k$ be a disk intersecting $F$ in an arc crossing $\ell_k$ in a single point with positive orientation and which is otherwise disjoint from $S$.  Connect these disks $\{\Delta_i, E_k\}_{i,k}$ together with bands disjoint from $F$ to arrive at a single disk denoted by $\Delta$.
Note that $\Delta$ has a neighborhood $N(\Delta)\cong D^2\times [0,1]$ and we see that $S\cap N(\Delta) \subseteq N(\Delta)$ is a trivial string link and $F \cap N(\Delta)$ is the union of bands about the components of this string link.  The complement $ S^3\smallsetminus \interior\left(N(\Delta)\right)$ is likewise a copy of $D^2\times[0,1]$ so that $S \cap  \left(S^3\smallsetminus \interior\left(N(\Delta)\right)\right) \subseteq S^3\smallsetminus \interior\left(N(\Delta)\right)$ is also a string link, which we denote by $T$.  Since $S$ has vanishing pairwise linking numbers so does $T$ as does $T^{-1}$, the horizontal mirror image  of $T$ with orientation reversed.  By \cite[Theorem C]{NaSt03} there is a sequence of delta-moves transforming the trivial string link $S \cap N(\Delta)\subseteq N(\Delta)$ to $T^{-1}$.  By performing these delta moves on the bands $F\cap N(\Delta)$ about the components of  the trivial string link, we see a sequence of double-delta moves changing the link $L$ to a new link $L'$ which admits a new C-complex $F'$ with metabolic linking form. Moreover, there is a new pairing $P'$ and derivative $D'$ which are obtained by performing delta moves on $P$ and $D$, respectively.  The important consequence is that $S':=D' \cup P'$ is the closure of the slice string link $T*T^{-1}$ so that  $S'$ is slice.

Next, we explain why $S'=D'\cup P'$ being slice implies that $L'$ is slice.  For each component $\ell'_k$ of $P'$, consider a neighborhood of $\ell'_k$ in $F'$ consisting of a pair of disks intersecting in two clasps bounded by  $BD(\ell'_k)$, the Bing double of $\ell'_k$.  Push these disks off of the boundary of $F'$ in order to make them interior to $F'$.  By pushing them only a small distance, we arrange that the clasps paired by $\ell_k'$ are contained in the union of their interiors.  The union of the resulting disks is denoted by $N(\ell'_k)$. The key observation is that since each clasp in $F$ is contained in $N(\ell'_k)$ for some $k$, $F'\smallsetminus \interior \left(\Cup_k N\left({\ell'_k}\right)\right)$ has no clasps and so is a disjoint union of surfaces each bounded by a single component of $L'$ as well as several components of $BD(P'):=\Cup_k BD(\ell'_k)$. In Figure~\ref{fig:pairingCut} we see a pairing loop $\ell$, its neighborhood $N(\ell)$ and $F\setminus N(\ell)$. 

\vspace{.4cm}
  \begin{figure}[h]
\begin{picture}(210,105)
\put(-40,0){\includegraphics[height=.15\textheight]{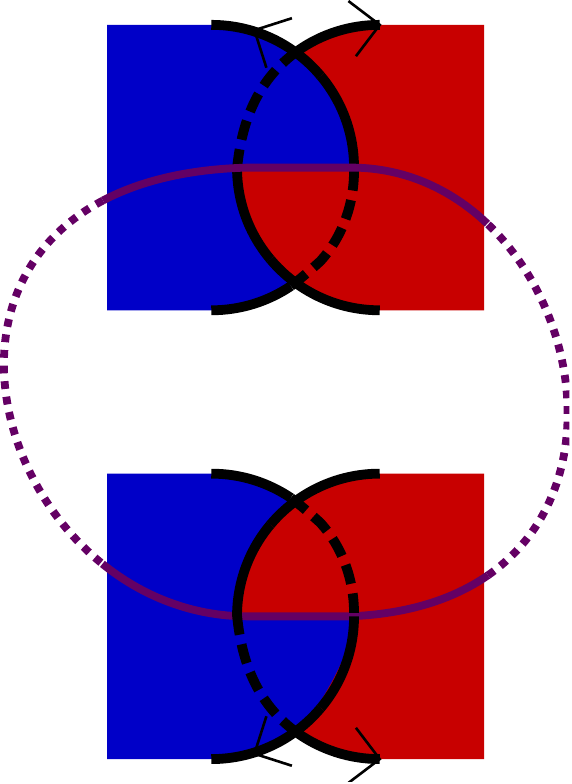}}
\put(-48,50){$\ell$}
\put(-40,90){$F_1$}
\put(24,90){$F_2$}

\put(80,0){\includegraphics[height=.15\textheight]{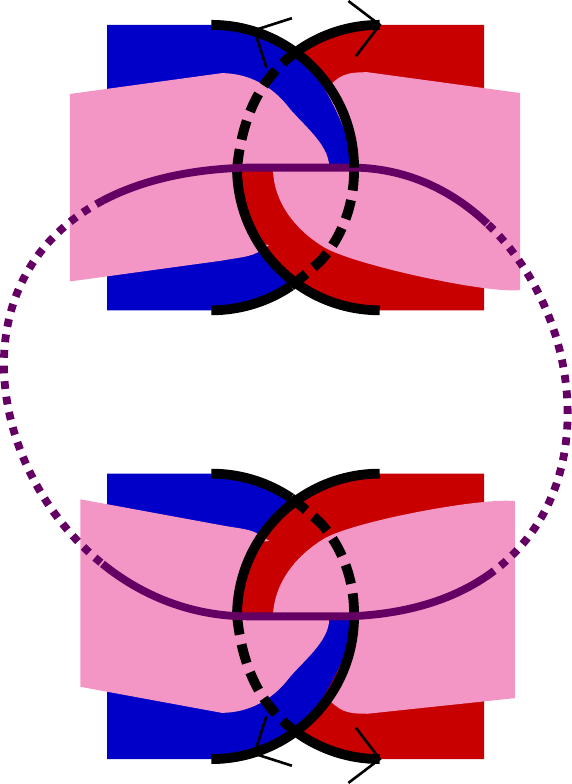}}
\put(63,75){\small{$N(\ell)$}}
\put(151,75){\small{$N(\ell)$}}

\put(200,0){\includegraphics[height=.15\textheight]{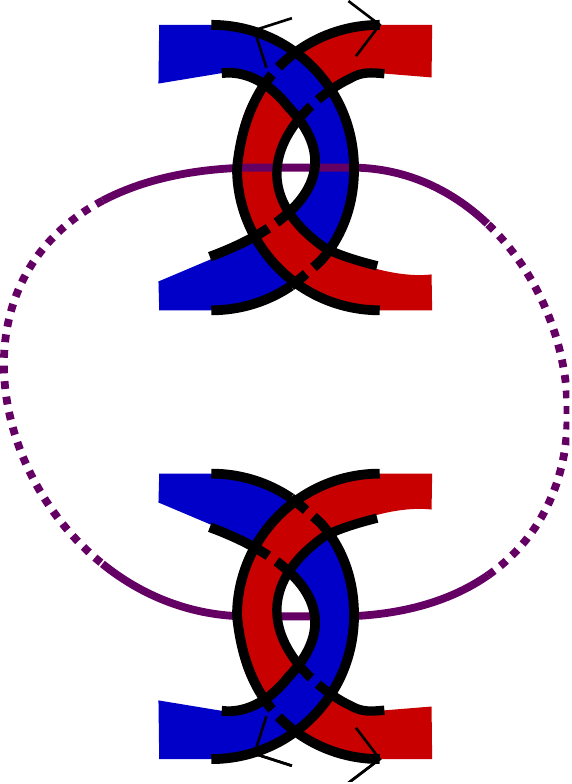}}
\put(175,90){\small{$F_1\smallsetminus N(\ell)$}}
\put(260,90){\small{$F_2\smallsetminus N(\ell)$}}
\end{picture}\vspace{.5cm}
\caption{Left to right:  A loop $\ell$ pairing two clasps in a C-complex $F$.  A neighborhood $N(\ell)$ consisting of two disks intersecting in two clasps.   $F\smallsetminus N(\ell)$ has two new boundary components and two fewer clasps.  }\label{fig:pairingCut}
\end{figure}

  Similarly, for any $\alpha'_i \in D'$, let $N(\alpha'_i)$ be an annular neighborhood of $\alpha'_i$ in $F'$.  That is, $N(\alpha'_i)\subseteq F'$ is an annulus bounded by a copy of $\alpha'_i$ along with a 0-framed reversed pushoff.  This link is denoted by $\double(\alpha'_i)$ and $Z(D') := \Cup_i \double(\alpha'_i)$.  We now have that  $F'\smallsetminus\interior\left(\Cup_{j} N({\ell'_j}) \cup \Cup_{i} N({\alpha'_i})\right)$ is a disjoint union of embedded planar surfaces each of which is bounded by one component of $L'$ together with some components of $BD(P')\cup \double(D')$.  
  In the situation of Figure~\ref{fig:derivative} the resulting surface appears in Figure~\ref{fig:CcxCut}.   
  We have already arranged that $P'\cup D'$ is slice, so that $\Cup_{j} BD(\ell'_j) \cup \Cup_{i} \double(\alpha'_i)$ is slice.  This planar surface together with these slice disks results in  results in a collection of slice disks for $L'$.  
\vspace{.8cm}
  
  \begin{figure}[h]
\begin{picture}(210,80)
%\put(210,80){*}
\put(0,0){\includegraphics[width=.45\textwidth]{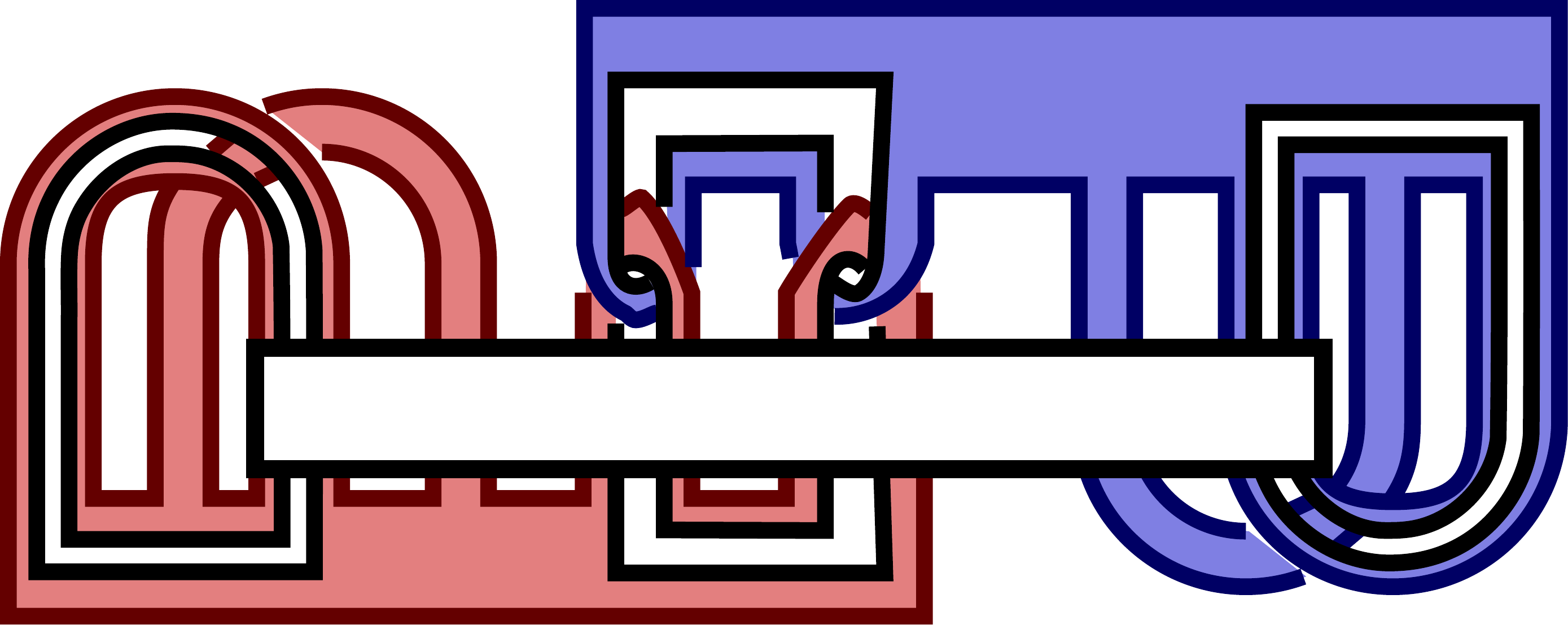}}
\put(95,25){\large $*$}
\end{picture}\vspace{.5cm}
\caption{For the C-complex, derivative and pairing or Figure~\ref{fig:derivative}, here is the result of cutting out a neighborhood of the derivative and paring curves.}\label{fig:CcxCut}
\end{figure}

 Finally, since $L'$ is slice, it is $0.5$-solvable.  Since $L$ and $L'$ differ by a sequence of double-delta moves, $L$ is $0.5$-solvable by \cite[Proposition 7.2]{MartinThesis}, completing the proof.\end{proof}

\section{Homology boundary links, fusions of boundary links, and $0.5$-solve-equivalence.}\label{sect: HBL = BL for 0.5}

In this section, we prove Theorem~\ref{thm: HBL = BL}.  We will show that every sublink of a homology boundary link is $0.5$-solve-equivalent to a boundary link.  Since the argument employs many {varied} techniques, we start with an outline.  As we explore in Subsection~\ref{subs: Fusion Seifert form},  every homology boundary link is concordant to a fusion of a boundary link \cite[Corollary 4.1]{CocLev91}.  A fusion of a  boundary link bounds an immersed union of surfaces with a particular intersection pattern.  We call these fusion complexes.  The definition of the Seifert form for a boundary link extends to give a well defined linking form on a {fusion Seifert surface}.  The failure of a fusion Seifert surface to be embedded is invisible to the Seifert form, meaning for any fusion Seifert form there is a boundary link with an identical form. In Subsection~\ref{metabolic to 0.5 solvable}, we introduce a notion of a fusion complex admitting a metabolic linking form and use Theorem~\ref{thm: metabolic C-cplx implies 0.5-sol} to prove that it implies 0.5-solvability.  In Subsection~\ref{metabolic fusion}, we complete the proof of Theorem~\ref{thm: HBL = BL} by taking the band sum of a fusion of boundary links with the reverse of the mirror image of a boundary link with identical linking form and showing that the resulting linking form is metabolic.

\subsection{Fusions of boundary links and their Seifert form}\label{subs: Fusion Seifert form}

By \cite[Corollary 4.1]{CocLev91} every homology boundary link is concordant to a fusion of a boundary link.  We begin by explaining what fusion means. Let $L=L_1\cup\dots\cup L_m$ be a link, $I=[0,1]$ be an interval, and $\delta\colon I\times I\into S^3$ be an embedding with 
\begin{itemize}
\item $\delta(I\times I)\cap L_i = \delta(I\times \{0\})$ and $\delta(I\times I)\cap L_j = \delta(I\times \{1\})$  for some $i,j$ with $i\neq j$,
\item $\delta(I\times I)$ otherwise disjoint from $L$,
\item the orientation of $\delta(I\times\{0\})$ agrees with the orientation of $L_i$ and $\delta(I\times\{0\})$ disagrees with the orientation of $L_j$.
\end{itemize}
Let $L_\delta$ be the result of changing $L$ by replacing $\delta(I\times\{0, 1\})$ by $\delta(\{0,1\}\times I)$ (see Figure~\ref{fig:Fusion}).  We say that $\delta$ is a  \emph{fusion band} and $L_\delta$ is the result of \emph{fusion} along $\delta$. The result of this construction is called the \emph{fusion of $L$ along $\delta$}.

  \begin{figure}[h]
\begin{tikzpicture}
\node at (-3,0){\includegraphics[height=.1\textheight]{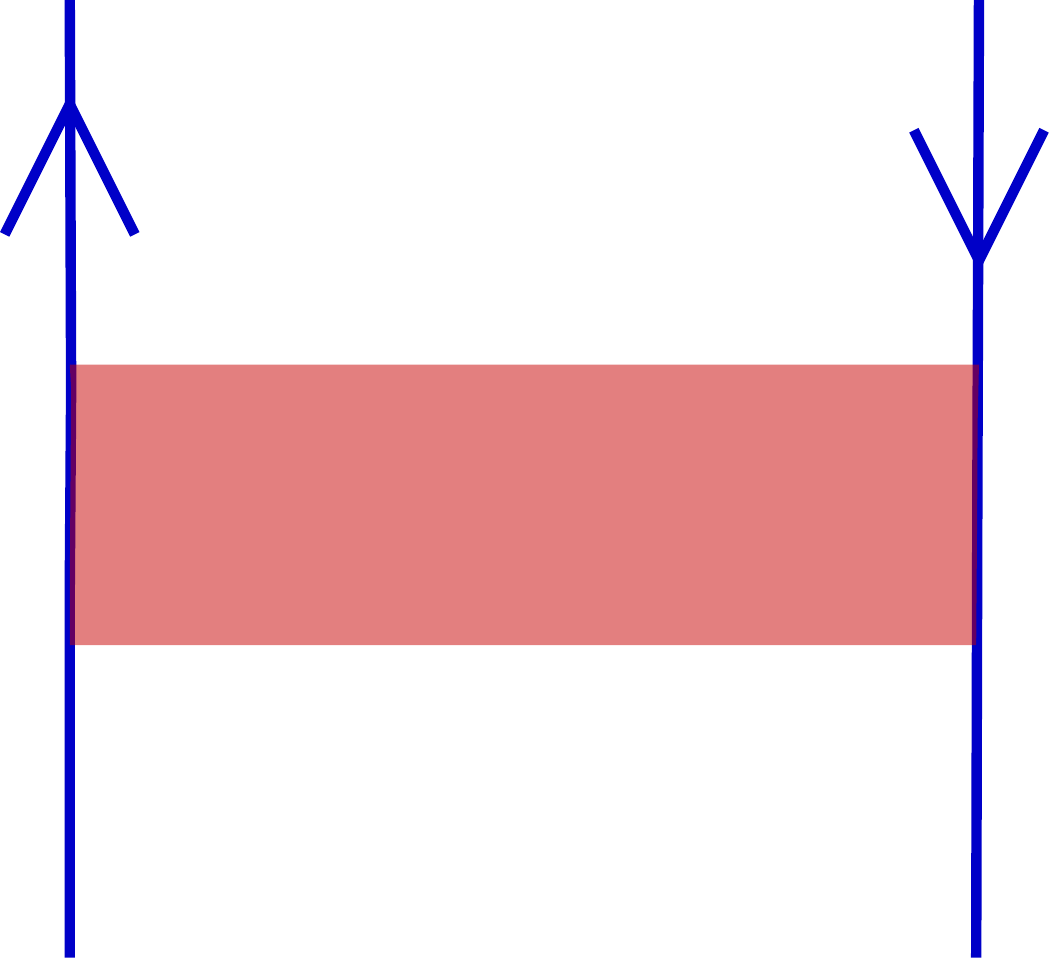}};
\node at (-3.85,1){$L_i$};
\node at (.85-3,1){$L_j$};
\node at (-3,0){$\delta$};
\node at (.8,0){$\phantom{1}$};
\end{tikzpicture} 
\begin{tikzpicture}
\node at (3,0){\includegraphics[height=.1\textheight]{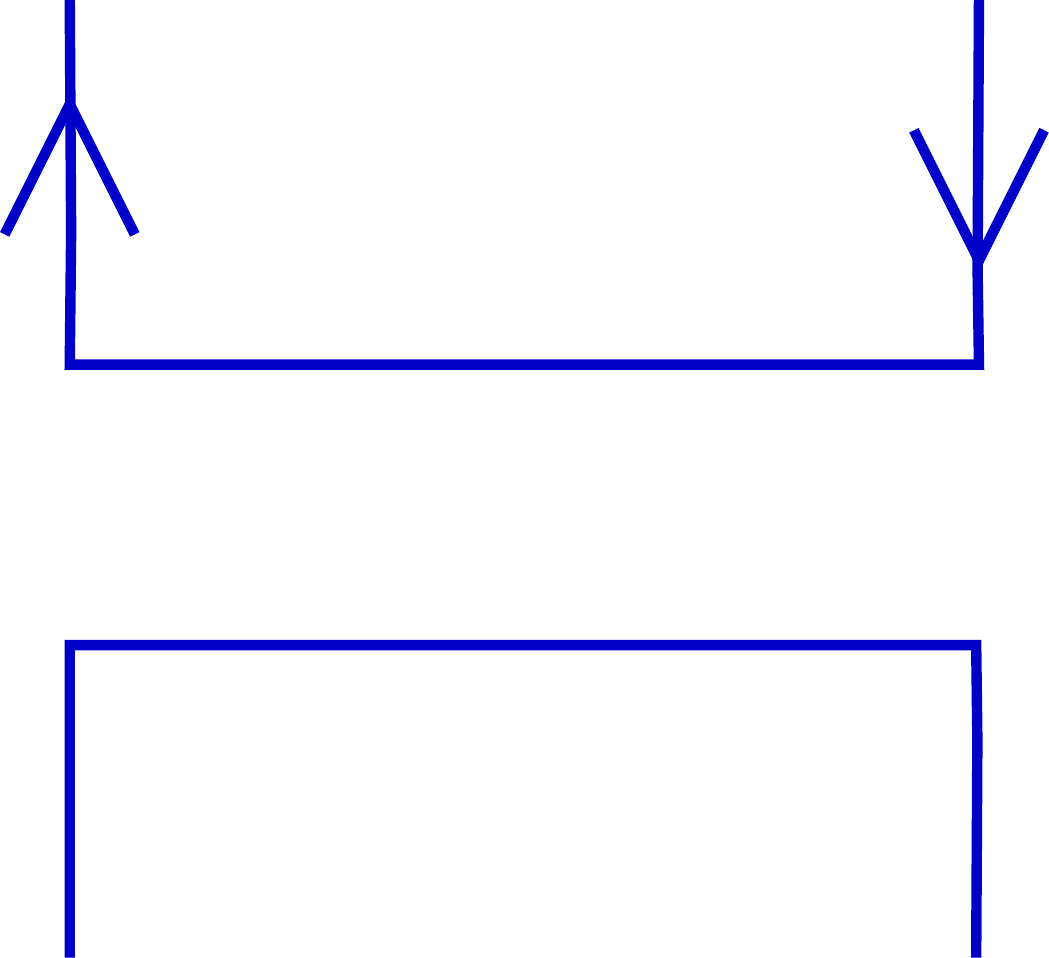}};
\end{tikzpicture}

\caption{Left: A link $L$ and a fusion band $\delta$  ($i\neq j$)  Right: the result of fusion, $L_\delta$}\label{fig:Fusion}
\end{figure} 
 
Let $L=L^0, L^1,\dots, L^n$ be a sequence of links with $L^i$ obtained from $L^{i-1}$ from a fusion along a band $\delta_i$.  Then $\delta = \{\delta_1,\dots,\delta_n\}$ is a \emph{collection of fusion bands} for $L$ and $L^n = L_\delta$ is the result of fusion of $L$ along $\delta$.  If $L$ is a boundary link, then $L^n$ is a \emph{fusion of a boundary link}.  Note that up to isotopy we may assume that these bands are all disjoint from each other.

  \begin{figure}[h]
\begin{picture}(240,110)
%\put(240,100){*}
\put(-20,0){\includegraphics[height=.15\textheight]{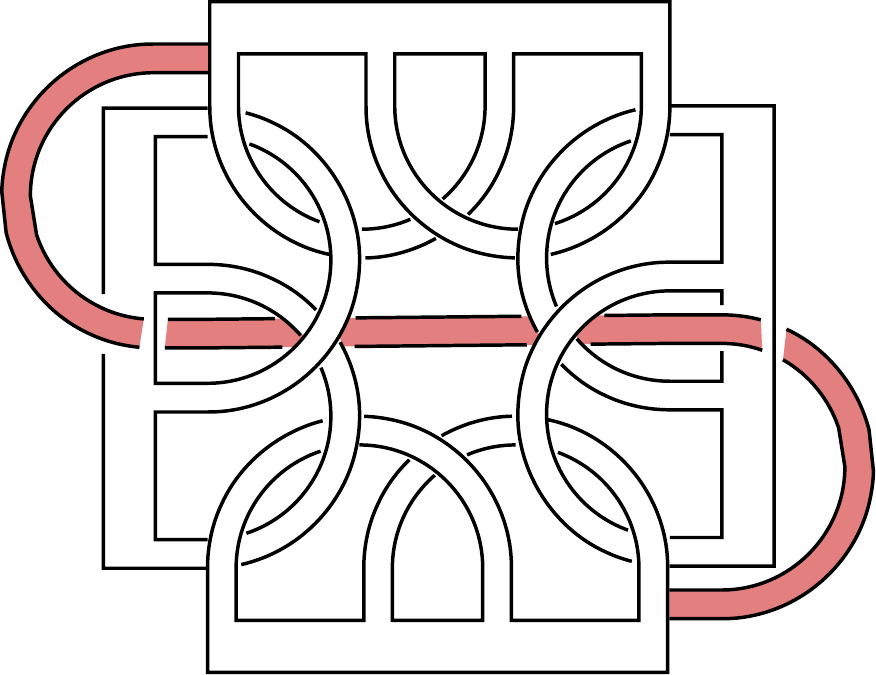}}
\put(-25,70){$\delta$}
\put(140,0){\includegraphics[height=.15\textheight]{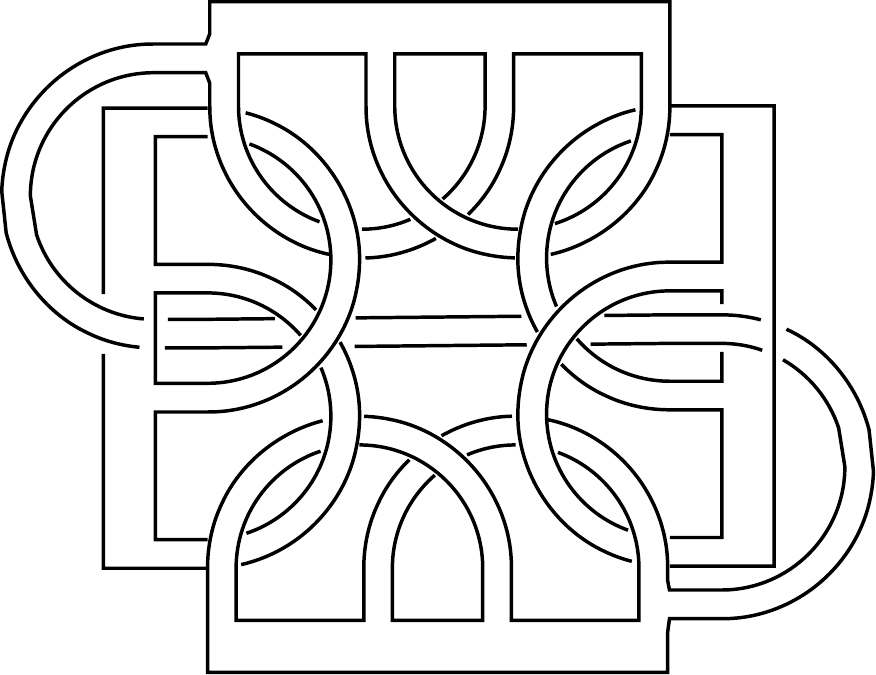}}

\end{picture}\vspace{.4cm}
\caption{Left: A boundary link $L$ and a fusion band $\delta$.  Right: The resulting fusion of a boundary link $L_\delta$}\label{fig:FBL}
\end{figure}

Let $L$ be a boundary link, $G$ be a collection of disjoint Seifert surfaces for $L$, and $\delta$ be a collection of fusion bands. After an isotopy we may assume $\delta$ intersects the interior of $G$ transversely in a collection of arcs each of the form $\delta_k(I\times \{t\})$ for some $t\in (0,1)$.  The union of $G$ with the image of $\delta$ produces an immersed surface with boundary $L_\delta$ with particular  self-intersections, as in the following definition.

  \begin{definition}\label{defn: fusion surface}
  Let $\Sigma$ be a compact oriented surface.  Suppose that each component of $\Sigma$ has a single boundary component
  and that $\Sigma$ contains disjoint oriented embedded arcs $a=a_1\cup\dots\cup a_{\ell}$ and $b= b_1\cup\dots\cup b_{\ell}$ interior to $\Sigma$ except that the boundary of $a$ lies in $\bdry \Sigma$.  Further assume that each $a_i$ separates a component of $\Sigma$.  Let $G\colon\Sigma\looparrowright S^3$ be an immersion with transverse self-intersections identifying each $a_i$ with $b_i$ and which is otherwise an embedding.  Then we say that $G$ is a \emph{fusion surface} with \emph{fusion arcs} $a$ and $b$.\end{definition}
  
  The prior arguments imply that if a link is a fusion of a boundary link then it bounds a fusion surface.  Any intersection between a fusion band and a Seifert surface results in a fusion arc.    The converse can also be easily seen - the boundary of $\Sigma\setminus N(a)$ is a boundary link and each fusion arc in $a$ corresponds to a fusion band.

Let $G\colon\Sigma\immerse S^3$ be a fusion surface with fusion arcs $a$ and $b$.  The \emph{Seifert form} $V_G\colon  H_1(\Sigma)\times H_1(\Sigma)\to \Z$ is given as follows.  Note that since each $a_i$ separates, we may represent each element in $H_1(\Sigma)$ by a sum of simple closed curves in $\Sigma$ each of which is disjoint from each $a_i$ and $b_i$. Let $\alpha, \beta$ be some such simple closed curves. Then we define $V_G([\alpha],[\beta]) := \lk(G(\alpha), G^+(\beta))$, where $G^+(\beta)$ is obtained by pushing $G(\beta)$ off of $G(\Sigma)$ in the positive normal direction.  We extend this linearly to obtain the Seifert form $V_G\colon H_1(\Sigma)\times H_1(\Sigma) \to \Z$.  We make no attempt here to study the degree to which the Seifert form is independent of $G$ and is an invariant of $L$. 

We observe in Lemma~\ref{lem: S-equiv to bdry} below that given any fusion surface, there is an embedded surface with the same Seifert form. We say that two fusion surfaces $G\colon\Sigma\immerse S^3$ and $G'\colon\Sigma'\immerse S^3$ have \emph{isomorphic Seifert forms} if there is an orientation preserving homeomorphism $\phi\colon\Sigma\to \Sigma'$ so that  $V_G(\alpha, \beta) = V_{G'}(\phi_*(\alpha),\phi_*(\beta))$ for every $\alpha, \beta\in H_1(\Sigma)$.

\begin{lemma}\label{lem: S-equiv to bdry}
If $G\colon\Sigma\immerse S^3$ is a fusion surface, then there is an embedding $G'\colon\Sigma'\into S^3$ such that $G$ and $G'$ give isomorphic Seifert forms.
\end{lemma}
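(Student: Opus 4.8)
The plan is to remove the self-intersections of the fusion surface $G\colon\Sigma\immerse S^3$ one fusion arc at a time, at the cost of adding a handle to the domain surface, in such a way that the Seifert form is unchanged. Recall that $G$ identifies each $a_i$ with $b_i$, where $a_i$ separates a component of $\Sigma$ and $b_i$ is an interior arc; near the double arc $G(a_i)=G(b_i)$ the image of $\Sigma$ looks like two sheets crossing transversely along an arc. The idea is a standard ``finger move/tube'' construction: push one of the two sheets off the other near the double arc and then tube the resulting boundary together, replacing a small neighborhood of $b_i$ by an embedded handle running parallel to $G(a_i)$. More precisely, work in a neighborhood $N \cong D^2 \times [0,1]$ of $G(a_i)$ in $S^3$ meeting $G(\Sigma)$ in two disks $D_a \times [0,1]$ and $D_b \times [0,1]$ crossing transversely along $\{0\}\times[0,1]$. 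Remove $D_b\times[0,1]$ and replace it with an annulus that runs from one side of $D_b\times\{0\}$, over the top of $D_a\times[0,1]$ (on whichever normal side of $G(\Sigma)$ is convenient), and back down to $D_b\times\{1\}$. Equivalently: cut $\Sigma$ along $b_i$ and glue in a tube (a band, since $b_i$ is an arc, so topologically this replaces two boundary arcs created by the cut with an embedded rectangle on the chosen normal side), yielding a new compact oriented surface $\Sigma_i'$ with one fewer double arc and one more genus (or one fewer component split off, tracked carefully so that each component of $\Sigma'$ still has a single boundary component — this is why we used $a_i$ separating, to choose which side the tube goes).

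First I would set up the local model above carefully and check that the resulting map on the modified surface is an embedding near the (now removed) $i$-th double arc, and agrees with $G$ outside $N$; iterating over all $\ell$ fusion arcs then produces an embedding $G'\colon\Sigma'\into S^3$ whose image is obtained from $G(\Sigma)$ by these local modifications. Its boundary is still $L$ (the construction is supported away from $\bdry\Sigma$), so $\Sigma'$ is an honest Seifert-type surface for the fusion link, presenting it as a boundary link after a suitable further cut. Next I would verify that $G$ and $G'$ have isomorphic Seifert forms. The point is that $H_1(\Sigma)$ is generated by simple closed curves avoiding all $a_i$ and $b_i$; such a curve is disjoint from the region where the modification happens, so it survives unchanged into $\Sigma'$, and there is an induced homeomorphism on homology — really, one should exhibit an orientation-preserving homeomorphism $\phi\colon\Sigma\to\Sigma'$, or at least an isomorphism $H_1(\Sigma)\to H_1(\Sigma')$ realizing curves to curves, such that each old generating curve maps to its obvious image. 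Since the curve $G(\alpha)$ for such a curve $\alpha$ literally coincides with $G'(\phi_*(\alpha))$ in $S^3$ (the modification is local and $\alpha$ avoids it), and the positive pushoff $G^+(\beta)$ likewise coincides with $G'^+(\phi_*\beta)$ provided we route the new tubes on a consistent normal side, we get $V_G(\alpha,\beta) = \lk(G(\alpha),G^+(\beta)) = \lk(G'(\phi_*\alpha),G'^+(\phi_*\beta)) = V_{G'}(\phi_*\alpha,\phi_*\beta)$. Extending linearly gives the isomorphism of Seifert forms.

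The main obstacle I expect is bookkeeping about orientations, normal directions, and the component/genus structure of $\Sigma'$: one must ensure that the tube is attached on a normal side compatible with the orientation so that $\Sigma'$ stays oriented and, crucially, so that pushing a curve off $G'(\Sigma')$ in the positive direction still computes the same linking numbers — a careless choice of side could flip a sign or, worse, change the surface's boundary behavior. There is also a subtlety in the claim that every homology class is represented by curves disjoint from all $a_i$ and $b_i$ simultaneously: since the $a_i$ are disjoint and each separates, cutting along $a = a_1\cup\dots\cup a_\ell$ leaves a surface whose $H_1$ surjects onto $H_1(\Sigma)$ after re-gluing, and one can represent classes by curves in the complement of a neighborhood of $a\cup b$; this needs a sentence of justification but is not hard. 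Finally, one should note that since each $a_i$ separates its component, the tube construction can always be arranged to keep ``each component has a single boundary component'' false in general — but this is acceptable, as Lemma~\ref{lem: S-equiv to bdry} only asks for \emph{some} embedded $\Sigma'$ with an isomorphic Seifert form, and in fact after the construction $\Sigma'$ is exactly the Seifert surface of a boundary link, which is the form in which the lemma will be used in Lemma~\ref{lem: S-equiv to bdry}'s application to Theorem~\ref{thm: HBL = BL}.
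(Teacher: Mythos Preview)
Your approach has two genuine problems that the paper's argument sidesteps.

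First, the paper's notion of \emph{isomorphic Seifert forms} demands an orientation-preserving homeomorphism $\phi\colon\Sigma\to\Sigma'$, not merely a map on $H_1$.  If your tubing really adds a handle at each fusion arc, then $\Sigma'\not\cong\Sigma$ and no such $\phi$ exists; even the weaker isomorphism $H_1(\Sigma)\cong H_1(\Sigma')$ fails because the ranks differ.  So the bookkeeping you flag as an ``obstacle'' is in fact fatal to the construction as you describe it.

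Second, you assert that the modification is supported away from $\partial\Sigma$, so that $\partial\Sigma'=L$.  But $\Sigma$ already has one component per component of the fused link $L$, and a local tubing does not change the component count.  Hence an \emph{embedded} $\Sigma'$ with $\partial\Sigma'=L$ would exhibit $L$ as a boundary link --- contradicting the very reason fusions of boundary links are interesting.  Concretely, to route your tube ``over the top'' of the $a$-sheet you must go around one of its edges; the interior edges continue into the rest of the surface, while the edges on $\partial\Sigma$ are exactly the sides of the fusion band, so in either case the tube meets $G(\Sigma)$ again.

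The paper's proof avoids both issues with a much simpler move.  Choose disjoint arcs $c_i\subseteq\Sigma$ from $\partial\Sigma$ to $b_i$ (avoiding $a$), and isotope $\Sigma$ \emph{into itself} along a neighborhood of $b\cup c$, producing an embedding $\phi_1\colon\Sigma\hookrightarrow\Sigma$ whose image misses $b$.  Since $G$ is already injective on $\Sigma\setminus b$, the composition $G'=G\circ\phi_1\colon\Sigma\hookrightarrow S^3$ is an embedding of the \emph{same} surface, and $\phi_1$ itself is the required homeomorphism witnessing that the Seifert forms agree (classes in $H_1(\Sigma)$ are represented by curves avoiding $b\cup c$, where nothing moved).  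The price is that the boundary link changes --- $G'(\partial\Sigma)$ is a genuine boundary link, not $L$ --- but the lemma never asked for the boundary to be preserved, and indeed it is used in Theorem~\ref{thm: S-equiv implies 0.5-sol} only to supply a boundary link with the same Seifert form.
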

\begin{proof}Let $a=a_1\cup \dots \cup a_\ell$ and $b = b_1\cup \dots \cup b_\ell$ be fusion arcs of $G$.   Let $c=c_1\cup \dots \cup c_\ell\subseteq \Sigma$ be disjoint embedded arcs with $c_i$  running from a point on $\bdry \Sigma$ to a point in $b_i$ and which are otherwise disjoint from $a$ and $b$.  There is an isotopy $\phi\colon\Sigma\times[0,1]\to \Sigma$ supported in a small neighborhood of $b\cup c$ with $\phi_0 = \Id_G$ and $\phi_1(\Sigma)$ disjoint from $b \cup c$.  By setting $G' := G \circ \phi_1 \colon \Sigma \hookrightarrow S^3$, it is clear that $G'$ is an embedding. See Figure~\ref{fig:removeIsectn} for an illustration. Moreover, any classes in $H_1(\Sigma)$ can be represented by disjoint unions of embedded loops away from a neighborhood of $b \cup c$. Thus, $G'$ and $G$ have isomorphic Seifert forms and $G'$ is an embedding, completing the proof. \end{proof}

\begin{figure}[h]
\begin{picture}(270,110)
\put(0,0){\includegraphics[height=.15\textheight]{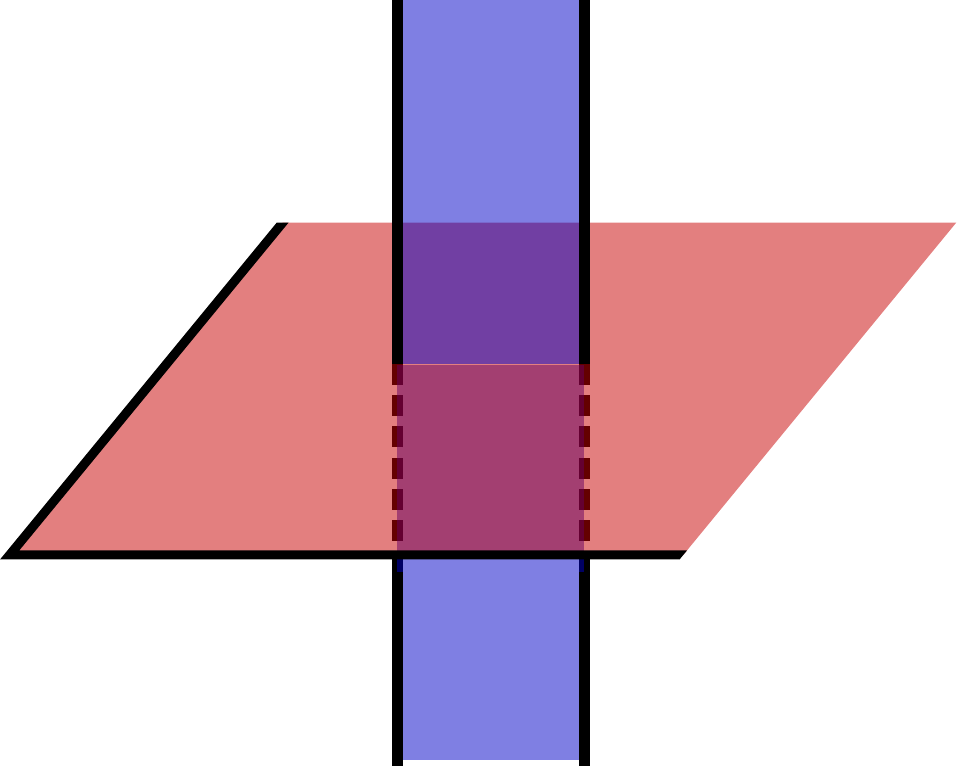}}
\put(130,0){\includegraphics[height=.15\textheight]{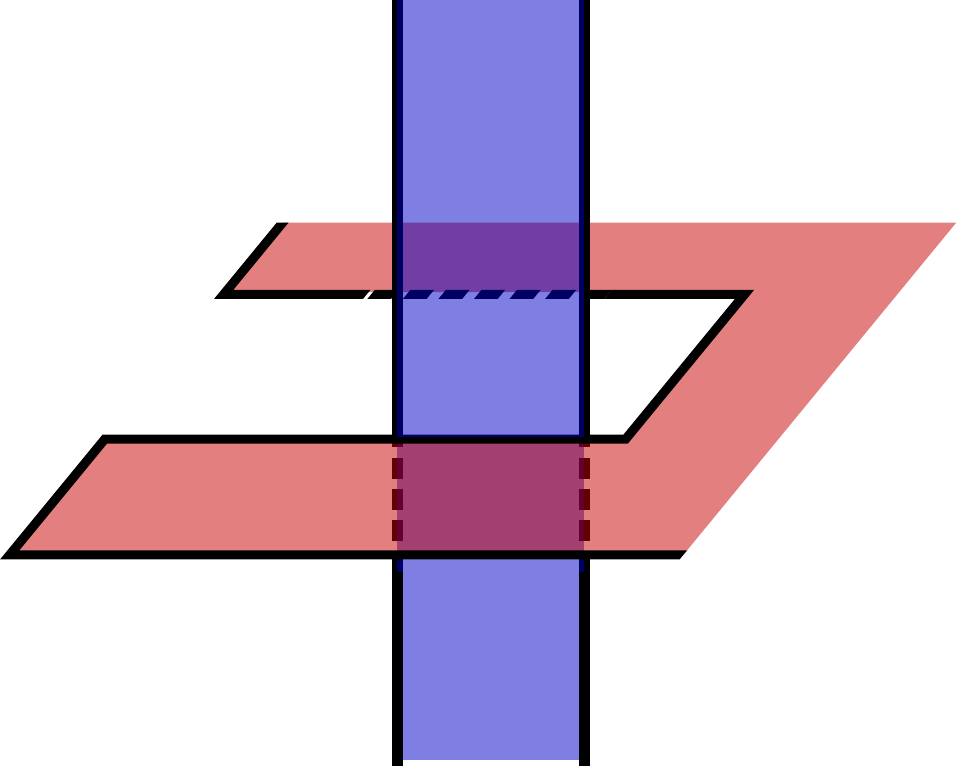}}

\end{picture}\vspace{.3cm}
\caption{Left: An intersection arc of a fusion complex.  Right: Removing that intersection.  }\label{fig:removeIsectn}
\end{figure}

\subsection{Fusion complexes, metabolic linking forms, and $0.5$-solvability}\label{metabolic to 0.5 solvable}
In Section~\ref{sect: C-cplx}, we presented a sufficient condition for a link to be 0.5-solvable using the linking form on a C-complex. We give a similar condition using the linking form on a fusion complex.

\begin{definition}\label{defn: metabolic form}
Let $G\colon\Sigma\immerse S^3$ be a fusion surface with fusion arcs $a=a_1\cup \dots \cup a_\ell$ and $b=b_1\cup \dots \cup b_\ell$.  We say that $G$ admits a \emph{metabolic linking form} if

\begin{itemize}
\item There is a  collection of embedded simple closed curves $\{\alpha_1, \dots, \alpha_g, \beta_1, \dots, \beta_g\}$ all disjoint except that $\alpha_i$ intersects $\beta_i$ transversely and positively in a single point, where $g=g(\Sigma)$ is the sum of the genera of the components of $\Sigma$.  This set of curves is called a symplectic basis for $H_1(\Sigma)$.
\item $V_{G}([\alpha_i], [\alpha_j]) = 0$ for all $i,j$.
\item $\beta_i\cap a_j = \beta_i\cap b_j =\emptyset$ for all $i,j$.
\end{itemize}
\end{definition}

The existence of a metabolic linking form implies $0.5$-solvability as we prove now by passing to the setting of C-complexes.  

 \begin{proposition}\label{prop: metabolic implies 0.5 solvable}
A fusion of a boundary link which admits a fusion surface with metabolic linking form is $0.5$-solvable.  
 \end{proposition}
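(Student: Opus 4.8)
The plan is to reduce Proposition~\ref{prop: metabolic implies 0.5 solvable} to Theorem~\ref{thm: metabolic C-cplx implies 0.5-sol} by converting a fusion surface with metabolic linking form into a genuine C-complex with metabolic linking form. The starting point is the observation (already noted in Section~\ref{subs: Fusion Seifert form}) that if $G\colon\Sigma\immerse S^3$ is a fusion surface with fusion arcs $a$ and $b$, then $\Sigma\sm N(a)$ is a Seifert surface for a boundary link, and each fusion arc corresponds to a fusion band $\delta_i$ running between two sheets of $\Sigma\sm N(a)$. The first step is therefore to cut $\Sigma$ open along the arcs $a_i$, turning the immersed surface into an embedded surface $F_0$ (a disjoint union of Seifert surfaces), together with a collection of embedded bands $\delta_1\cup\dots\cup\delta_\ell$ attaching it. The key point is that pushing one copy of each cut edge slightly off the other, as in the proof of Lemma~\ref{lem: S-equiv to bdry}, realizes the fusion surface as an honest embedded surface whose boundary is the same link $L$, at the cost of introducing these bands.

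The second step is to turn each fusion band into a pair of clasps, so that the resulting object is a C-complex. A fusion band $\delta_i$ joining two boundary components of $F_0$ can be replaced, up to isotopy of $L$, by a small ``finger'' of one surface poked through another, producing exactly a pair of clasp intersections of opposite sign — this is the standard picture relating fusion to homology boundary links. Doing this for every band yields a C-complex $F$ for $L$ with $2\ell$ clasps, arranged in $\ell$ cancelling pairs $c_i^+, c_i^-$. Crucially, one should choose this replacement so that the arc $b_i$ (respectively a nearby dual curve), which in the fusion surface was a small loop encircling the band, becomes precisely a loop $\ell_i$ on the C-complex that passes once over $c_i^+$ and once over $c_i^-$, is disjoint from the other clasps, and separates each of the two surfaces it meets — that is, $\ell_1\cup\dots\cup\ell_\ell$ is a complete pairing for $F$ in the sense of Definition~\ref{defn: pairing}. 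The curves $\{\alpha_1,\dots,\alpha_g\}$ of Definition~\ref{defn: metabolic form} survive the cutting-and-clasping unchanged (they were already disjoint from $a$ and $b$, hence from the bands and the new clasps), and they form a nonseparating collection on $F$ of the correct size $g=g(F)$; they are the derivative $D$.

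The third step is to check that the metabolicity hypotheses translate correctly. The linking form $V^\epsilon$ on the C-complex, restricted to curves disjoint from all clasps, agrees with the Seifert/pushoff linking numbers computed on the fusion surface: $V^\epsilon([\alpha_i],[\alpha_j])$ is computed from $\lk(\alpha_i,\alpha_j^\epsilon)$, which is determined by the values $V_G([\alpha_i],[\alpha_j])$ (and their ``conjugates''), all of which vanish by hypothesis. For the pairing loops $\ell_j$, one arranges that each $\ell_j$ is a small, unknotted, unlinked curve (it bounds a disk meeting $F$ in a single arc through the band), so $\lk(\ell_j,\ell_k^\epsilon)=0$ and $\lk(\alpha_i,\ell_j^\epsilon)=0$ automatically for a careful choice of the band picture — more precisely, because the $\beta_i$'s (and hence the dual loops near the bands) were taken disjoint from the fusion arcs, the bands can be introduced ``locally'' without creating linking with $D$. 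Hence $V^\epsilon$ vanishes on $\{[\alpha_i],[\ell_j]\}$ for every $\epsilon\in\{\pm1\}^n$, so $F$ admits a metabolic linking form in the sense of Definition~\ref{defn: metabolic}. Then Theorem~\ref{thm: metabolic C-cplx implies 0.5-sol} gives that $L$ is $0.5$-solvable, completing the proof.

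The main obstacle I expect is the bookkeeping in the second and third steps: arranging the clasp-pair replacement of each fusion band so that simultaneously (i) the pairing loops $\ell_j$ genuinely separate both surfaces they lie on, (ii) they are unlinked from each other and from the derivative $D$, and (iii) no new contributions to $V^\epsilon$ are created among the chosen curves for \emph{any} of the $2^n$ sign vectors $\epsilon$. Since $V^\epsilon$ depends on which side of each surface a pushoff is taken, one must verify that the ``finger move'' introducing a clasp pair can be localized in a ball meeting $\Sigma$ in a standard way, so that changing $\epsilon$ only affects the computation by a controlled, vanishing amount. This is exactly the kind of local model argument that the paper's Figures illustrate, so I would organize the proof around a single explicit local picture of a band becoming a clasp pair together with its pairing loop, and then invoke Theorem~\ref{thm: metabolic C-cplx implies 0.5-sol}.
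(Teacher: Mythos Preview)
Your high-level strategy---convert the fusion surface into a C-complex with metabolic linking form and then invoke Theorem~\ref{thm: metabolic C-cplx implies 0.5-sol}---is exactly the paper's approach. However, your implementation has a genuine gap.

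The main problem is the case where the fusion arcs $a_i$ and $b_i$ lie in the \emph{same} component of $\Sigma$. Your finger/clasp move takes the sheet of the surface near $b_i$ and pokes it past the band near $a_i$; when $a_i$ and $b_i$ sit in the same component of $\Sigma$, this produces clasps between a surface and itself, which is not allowed in a C-complex. (Equivalently, in your ``cut along $a$'' picture, the band $\delta_i$ may pierce the very component of $F_0$ it is attached to.) The paper handles this case with a separate local move: delete a disk neighborhood of $b_i$ and the strip between pushoffs $a_i^\pm$, then reattach via two new bands. This raises the genus by~$1$ and introduces a new pair $(\alpha_i',\beta_i')$ with $\alpha_i'$ a small unknot bounding a disk; the enlarged collection $\{\alpha_i\}\cup\{\alpha_j'\}$ then serves as the derivative on the resulting C-complex. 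Your proposal contains no analogue of this step.

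A secondary issue: your assertion that the $\alpha_i$ are ``already disjoint from $a$ and $b$'' is not part of Definition~\ref{defn: metabolic form}, which imposes that condition only on the $\beta_i$. One can isotope $\alpha_i$ off the interior arcs $b_j$, but $\alpha_i$ may still meet $a_j$ geometrically (only $\alpha_i\cdot a_j=0$ since $a_j$ separates). After the clasp move at $a_j$, those crossings become intersections $\alpha_i\cap\ell_j$. The paper removes these---and simultaneously resolves your flagged concern about $\ell_j$ separating both surfaces it lies on---by band-summing $\ell_j$ with copies of $\beta_i$ (and afterward with $\alpha_j'$), using that the algebraic cancellation keeps $[\ell_j]$ in the span on which $V^\epsilon$ vanishes. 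Your sketch identifies this bookkeeping as the likely obstacle but does not supply the mechanism to carry it out.
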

 
 \begin{proof}
 Let $G\colon\Sigma\immerse S^3$ be a fusion surface with fusion arcs $a=a_1\cup \dots \cup a_m$ and $b=b_1\cup \dots \cup b_m$.  Let $\{\alpha_1, \dots, \alpha_g, \beta_1, \dots, \beta_g\}$ be a collection of embedded simple closed curves from Definition~\ref{defn: metabolic form}. 
Since $b_1,\dots, b_m$ is a disjoint collection of arcs interior to $\Sigma$, we can arrange that $\alpha_i\cap b_j =\emptyset$ for all $i, j$. Moreover, since each $a_j$ separates $\Sigma$, the algebraic intersection $\alpha_i\cdot a_j=0 $ for all $i,j$.

Our proof transforms $G$ into a C-complex with metabolic linking form by using the moves of Figure~\ref{fig:removefusions} near each fusion arc.  We explain these moves in detail in the coming proof, but we lead with an overview. If a pair of fusion arcs $(a_i, b_i)$ each sit in the same component of $\Sigma$, then we will use the move of Figure~\ref{fig:ResolveSelfFusion}.  This will increase the genus of $\Sigma$ by 1.  If the pair $a_i$ and $b_i$ sit in different components if $\Sigma$ then the move of Figure~\ref{fig:ResolveFusion} replaces the ribbon intersection at $G(a_i)$ with two clasps.  Once we perform these moves at every pair of intersection arcs, we will have constructed a C-complex.  A detailed book-keeping will reveal that this C-complex admits a metabolic linking form.  An appeal to Theorem~\ref{thm: metabolic C-cplx implies 0.5-sol} will complete the proof.

Up to reordering, we assume that for $1\le i\le q$, the $i$'th pair of fusion arcs, $a_i$ and $b_i$ sit in the same component of $\Sigma$.  Let $U_i\subseteq \Sigma$ be a small open disk neighborhood of $b_i$ and $a_i^+, a_i^-\subseteq \Sigma$ be positive and negative pushoffs of $a_i$.  Remove $U_i$ and the band between $a_i^+$ and $a_i^-$ from $\Sigma$.  The resulting surface $\Sigma^0$ has three more boundary components than does $\Sigma$.  Define $\Sigma'$ by adding to $\Sigma^0$ a pair of bands from $a_i^+$ and $a_i^-$ to $\bdry U_i$. First restrict $G$ to $\Sigma_0$ and then extend it over these bands as in Figure~\ref{fig:ResolveSelfFusion} to get $G'\colon\Sigma'\immerse S^3$.  This operation increases the genus by $1$.  Let $\alpha_i'$ be the result of pushing $\bdry U$ into  the interior of $G'$ and $\beta_i'$ be an intersection dual to $\alpha_i'$ disjoint from all of $\alpha_1, \beta_1, \dots, \alpha_g, \beta_g$.  Perform this move at every pair of fusion arcs $a_i, b_i$ contained in a single component of $\Sigma$.

For $1\le i\le m-q$, $a_{i+q}\subseteq \Sigma_j$ and $b_{i+q}\subseteq \Sigma_k$ lie in different components of $\Sigma$.  In this case we modify $G$ by pushing along an arc from $\bdry \Sigma$ to $b_i$.  This move is depicted in Figure~\ref{fig:ResolveFusion} and produces two new clasps $c_i^+$ and $c_i^-$ of opposite sign together with a loop $\ell_i$ which passes once over $c_i^+$ and once over $c_i^-$.  Notice that $\ell_i$ separates $G(\Sigma_k)$ but may fail to separate $G(\Sigma_j)$.  Perform this move at each set of fusion arcs contained in different components of $\Sigma$.

\vspace{.8cm}
  \begin{figure}[h]
%  \begin{picture}(400,135)
%  \put(0,30){
\begin{subfigure}{.4\textwidth}
\begin{picture}(190,65)
\put(0,0){\includegraphics[height=.1\textheight]{selfFusion.pdf}}
\put(80,35){{$\mapsto$}}
\put(100,0){\includegraphics[height=.1\textheight]{resolve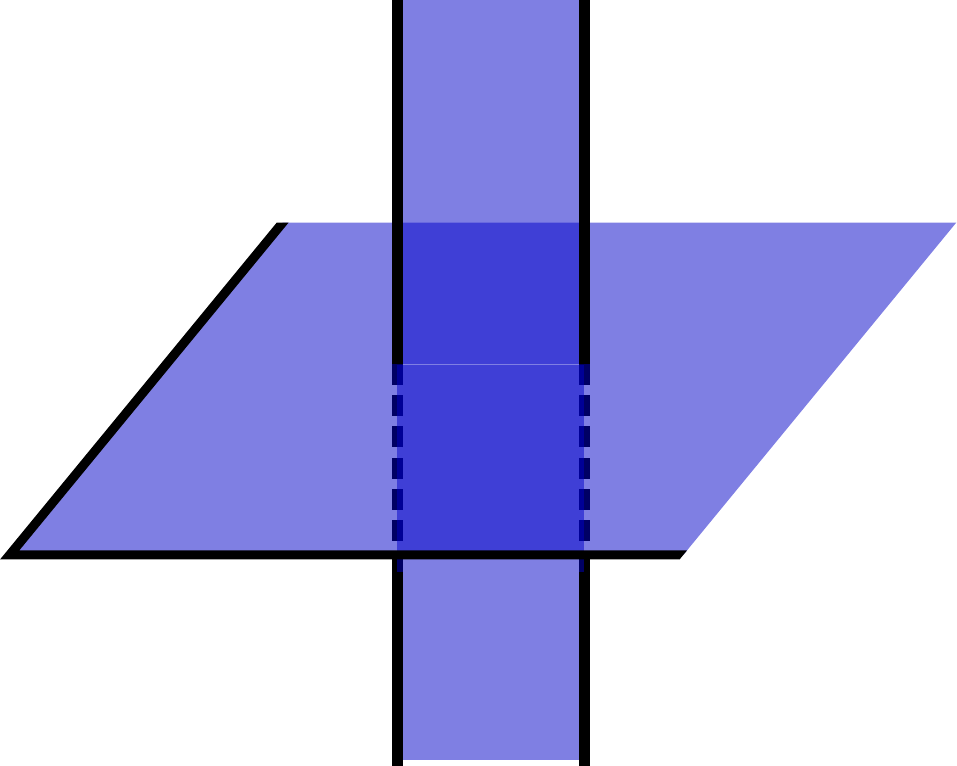}}
\put(120,25){$\alpha'$}
\put(160,50){$\beta'$}
\end{picture}
\caption{}\label{fig:ResolveSelfFusion}
\end{subfigure}
%}
%
%\put(200,30){
\begin{subfigure}{.4\textwidth}
\begin{picture}(190,65)
\put(0,0){\includegraphics[height=.1\textheight]{fusionIntersectionArc.pdf}}
\put(80,35){{$\mapsto$}}
\put(100,0){\includegraphics[height=.1\textheight]{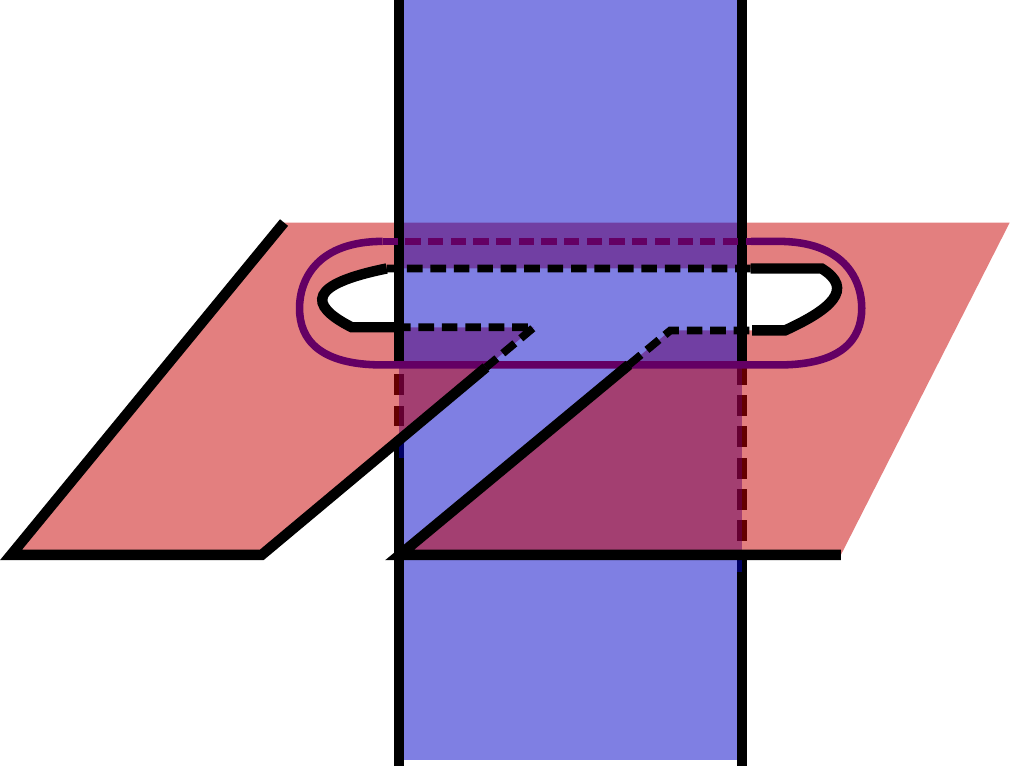}}
\put(170,50){$\ell$}
\end{picture}
\caption{}\label{fig:ResolveFusion}
\end{subfigure}
%}
%\end{picture}%vspace{.5cm}
\caption{Left: A fusion self-intersection is resolved by increasing genus by 1.  Right:  A fusion intersection between different components replaced by two clasps.}\label{fig:removefusions}
\end{figure}

Let $G'\colon\Sigma'\immerse S^3$ be the resulting immersion.  Then $F = G'(\Sigma')$ is a C-complex.   We claim that this C-complex has metabolic linking form.  The loops $\alpha_1,\dots, \alpha_g, \alpha_1'\dots, \alpha_q'$ and $\ell_{1},\dots, \ell_{m-q}$ come close to satisfying conditions in Definition~\ref{defn: metabolic}.
\begin{itemize}
\item Each $\ell_i$ passes once over two clasps of opposite signs.
\item Each clasp sits in exactly one $\ell_i$.
\item $\alpha_1,\dots, \alpha_g, \alpha_1'\dots, \alpha_q'$ forms a non-separating collection of disjoint simple closed curves where $g+q$ is the genus of $F$.
\item $\alpha_i'$ is disjoint from each $\ell_k$.
\item For each $\epsilon\in \{\pm1\}^n$, the linking form $V^\epsilon$ vanishes on the span of $\{[\alpha_i],[\alpha_j'], [\ell_k]\}_{i,j,k} \subseteq H_1(F)$.
\end{itemize}

It remains to modify the various $\ell_i$ so that:
\begin{itemize}
\item For each $i$, if $\ell_i\subseteq F_j\cup F_k$, then $\ell_i$ separates $F_j$ and $F_k$.
\item $\ell_i\cap \alpha_j=\emptyset$ for all $i, j$.
\end{itemize}

By construction $\ell_j$ is disjoint from each $\beta_i$ and each $\alpha_i'$.  While $\ell_j$ may not be disjoint from some $\alpha_i$, the fact that each the fusion arc $a_j$ separates $\Sigma$ implies that $\ell_j$ has zero algebraic intersection with $\alpha_i$. On the other hand, the algebraic intersection $\ell_j\cdot \beta_i'$ may fail to vanish.

Fix one of the curves $\alpha_i$.  For each time that $\alpha_i$ intersects some $\ell_j$, modify $\ell_j$ by a band sum with $\beta_i$ as in Figure~\ref{fig:removeLoopCrossing}.  
%This replacement reduces $\#\ell_j \cap \alpha_i$ by 1 and creates no new intersections.  This replaces $[\ell_k]\in H_1(F)$ with $[\ell_k]\pm [\beta_i]$, depending on the sign of intersection being removed.
{This replacement reduces $\#\ell_j \cap \alpha_i$ by 1, creates no new intersections, and replaces $[\ell_k]\in H_1(F)$ with $[\ell_k]\pm [\beta_i]$, depending on the sign of intersection being removed.}
  Since the algebraic intersection $\alpha_i\cdot \ell_k = 0$, after we remove every such intersection, the homology class of $\ell_k$ is unchanged.  

   \begin{figure}[h]
\begin{picture}(200,85)
%\put(380,65){*}
\put(0,0){\includegraphics[height=.1\textheight]{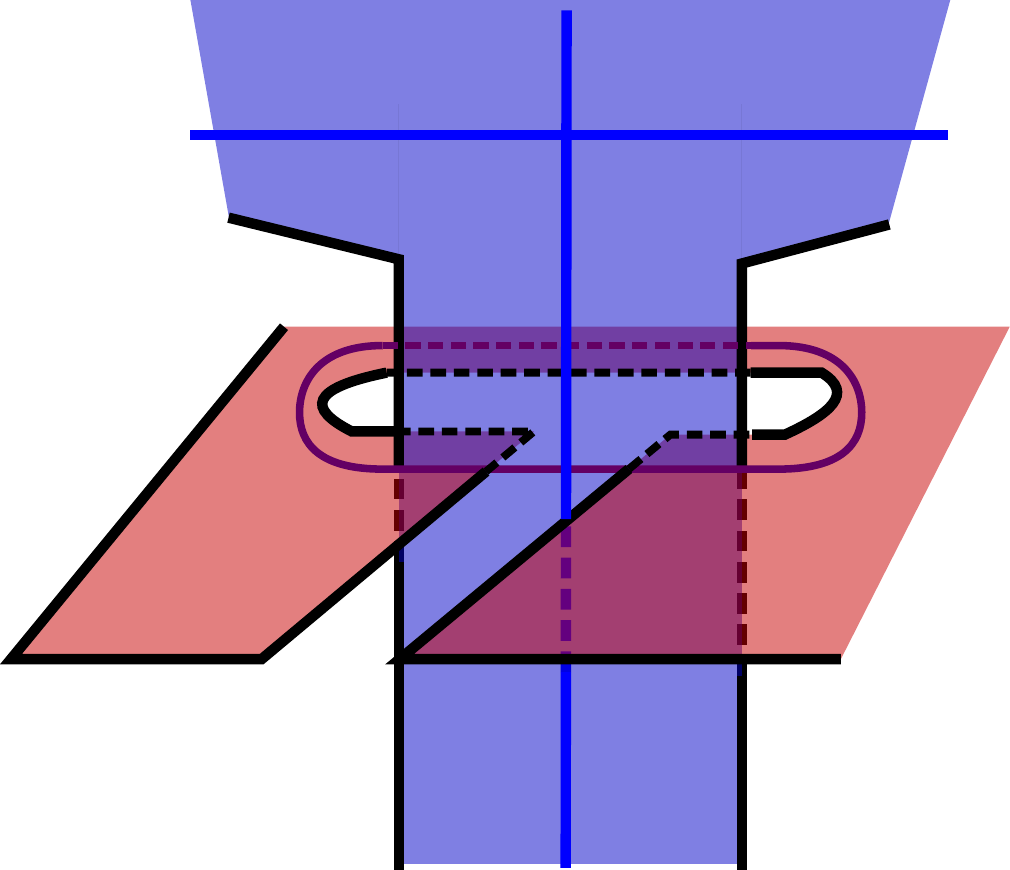}}
\put(5,55){\textcolor{blue}{$\beta_i$}}
\put(40,70){\textcolor{blue}{$\alpha_i$}}
\put(65,27){\textcolor{purple}{$\ell_j$}}
\put(80,35){{$\mapsto$}}
\put(100,0){\includegraphics[height=.1\textheight]{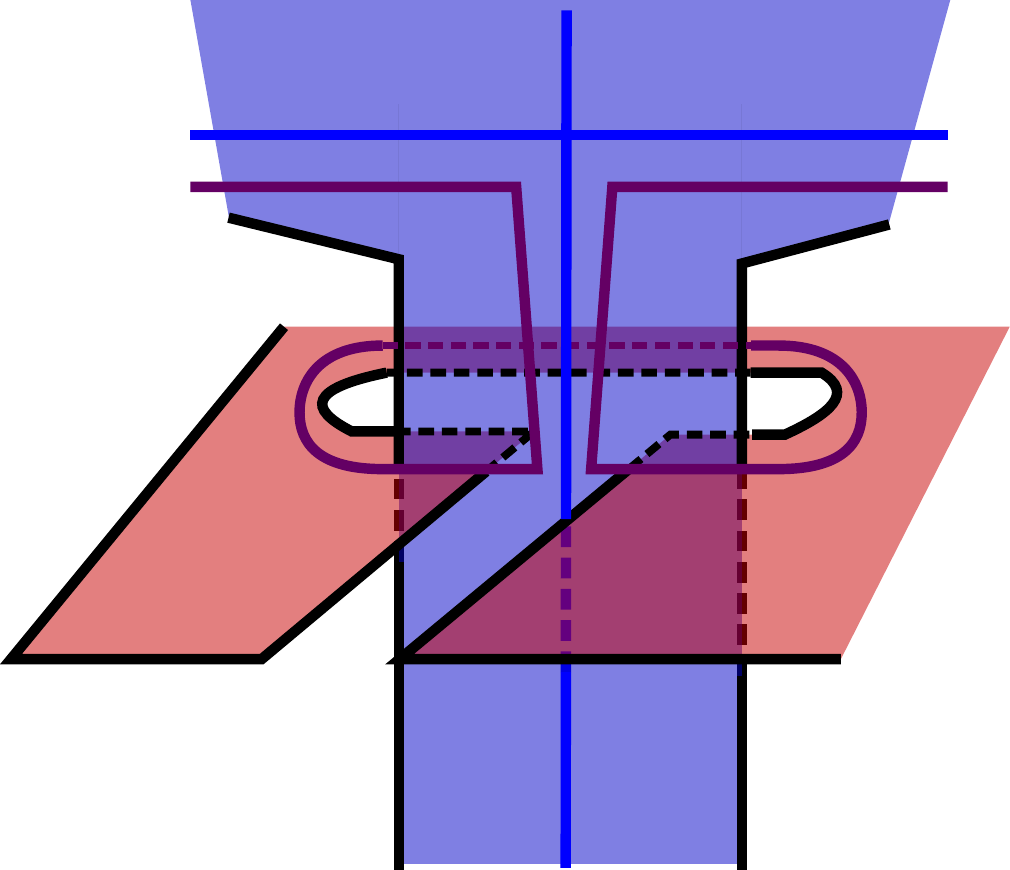}}
\end{picture}\vspace{.5cm}
\caption{Right:  A loop $\ell\subseteq F_i\cap F_j$ intersects a loop $\alpha_i\subseteq F_i$ with intersection dual $\beta_i\subseteq F_i$.  Right: sliding $\ell$ over $\beta_i$ reduces $\#\ell \cap \alpha_i$ by 1.  }\label{fig:removeLoopCrossing}
\end{figure} 

At this stage each $\ell_j$ may still intersect the intersection duals $\beta_i'$ for the curves $\alpha_i'$.  We follow a similar logic, for each intersection between $\beta_i'$ and $\ell_j$ we modify $\ell_j$ by a band sum with $\alpha_i'$ to remove this intersection.  This move transforms $[\ell_j]\in H_1(F)$, into $[\ell_j]\pm[\alpha_i']$. Since $V^\epsilon$ already vanished on the span of $\{[\alpha_i],[\alpha_j'], [\ell_k]\}_{i,j,k} \subseteq H_1(F)$, it still vanishes after this replacement is made.  

We now have that $\ell_j\cap F_i$ is disjoint from each curve that forms a symplectic basis for $H_1(F_i)$.  It follows that $[\ell_k\cap F_i]=0$ in $H_1(F_i, \bdry F_i)$ and so $\ell_k\cap F_i$ separates, as we needed. Hence, the C-complex $F$ has metabolic linking form, and by Theorem~\ref{thm: metabolic C-cplx implies 0.5-sol}, we conclude that 
%its boundary 
$L$ is $0.5$-solvable.\end{proof}

 \subsection{Links with isomorphic Seifert forms are $0.5$-solvably equivalent.}\label{metabolic fusion}

In Lemma~\ref{lem: S-equiv to bdry}, we saw that every fusion of a boundary link has a Seifert form that is isomorphic to the Seifert form of a boundary link.  Here, we prove that fusions of boundary links with isomorphic Seifert forms are $0.5$-solve-equivalent. By combining the two, we will prove Theorem~\ref{thm: HBL = BL} at the end of this section.

\begin{theorem}\label{thm: S-equiv implies 0.5-sol}
If $L$ and $L'$ are fusions of boundary links with fusion surfaces $G$ and $G'$ that have isomorphic Seifert forms, then $L$ and $L'$ are $0.5$-solve-equivalent. \end{theorem}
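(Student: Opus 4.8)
The plan is to build an explicit $0.5$-solve-equivalence $(W, C)$ from $L$ to $L'$ by gluing together two ``half-cobordisms'' along $M_{L\sqcup -L'}$-type boundary, using the isomorphic Seifert forms of $G$ and $G'$ to match up the relevant homology. First I would set up the model: take $\Sigma\times[0,1]$, where $\Sigma$ is the abstract surface underlying both $G$ and $G'$ (using the orientation-preserving homeomorphism $\phi\colon\Sigma\to\Sigma'$ from the definition of isomorphic Seifert forms to identify them). The immersions $G$ and $G'$ give two ways of pushing $\Sigma$ into the two ends $S^3\times\{0\}$ and $S^3\times\{1\}$; the trace of a generic isotopy in $S^3\times[0,1]$ between $G(\Sigma)$ and $G'(\Sigma')$ is a singular $3$-manifold whose generic self-intersections are controlled by the fusion arcs. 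Deleting a tubular neighborhood of this and of the annuli swept out by $L$ and $L'$ should produce a cobordism $W$ from $S^3$ to $S^3$ in which $L$ and $L'$ cobound annuli $C$, exactly as in Definition~\ref{defn:n-solvableEquiv}.

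The main computation is then homological. I would show $H_1(W)=0$ by a Mayer--Vietoris argument (the meridians of $L$ and of $L'$ are identified through the $\Sigma\times[0,1]$ piece, and the fusion-arc neighborhoods do not create new $H_1$). For $H_2(W)$, the surface $\Sigma\times[0,1]$ contributes classes indexed by a symplectic basis $\{a_j,b_j\}$ for $H_1(\Sigma)$: for each basis curve one gets a closed surface in $W$ obtained by capping $\alpha\times\{0\}$ and $\phi(\alpha)\times\{1\}$ (pushed into $W$ via $G$ and $G'$) with Seifert surfaces on the two $S^3$ ends. The key point is that the self-intersection and mutual-intersection numbers of these classes in $W$ are computed precisely by $V_G$ and $V_{G'}$: since the forms agree under $\phi$, the contributions from the two ends cancel, so the $a_j$-surfaces form a Lagrangian with the $b_j$-surfaces as duals (intersecting in one point). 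This is the standard ``algebraically slice Seifert form $\Rightarrow$ $0.5$-solution'' computation of \cite[Theorem 1.1]{COT03}, adapted to the fusion-complex setting.

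The final step is the $\pi_1$-depth condition. I need the $0$-duals $Y_j$ (the $b_j$-surfaces) to have $\pi_1$ landing in $\pi_1(W\smallsetminus\nu C)^{(0)}$ — automatic — and the $0$-Lagrangians $X_j$ (the $a_j$-surfaces) to have $\pi_1$ landing in $\pi_1(W\smallsetminus\nu C)^{(1)}$, i.e.\ in the commutator subgroup. This is where I would use that the $a_j$ lie on a Seifert surface disjoint from the longitudes and that, capped off in $W$, their caps lie in abelian covers: a curve on $\Sigma$ that is nullhomologous in the relevant Seifert surface bounds there, and after capping in $W$ the whole $X_j$ maps into $H_1(W\smallsetminus\nu C)$ trivially because the Seifert-surface caps are nullhomologous rel the meridians — hence $\pi_1(X_j)$ maps into the commutator subgroup. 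I expect this $\pi_1$-bookkeeping, specifically verifying that the capping surfaces on the $S^3$ ends do not push $\pi_1(X_j)$ out of $\pi_1(W\smallsetminus\nu C)^{(1)}$ once the fusion arcs are present, to be the main obstacle; the fusion arcs introduce extra $1$-handles and one must check they do not obstruct the depth condition. I would handle this by choosing the isotopy between $G(\Sigma)$ and $G'(\Sigma')$ carefully so that the fusion arcs are carried along rigidly, keeping the trace of the fusion structure product-like, and then the depth condition reduces to the embedded ($C$-complex) case already implicitly handled in the proof of Proposition~\ref{prop: metabolic implies 0.5 solvable}.
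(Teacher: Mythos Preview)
Your construction of the cobordism $W$ has a genuine gap at the very first step. You speak of ``the trace of a generic isotopy in $S^3\times[0,1]$ between $G(\Sigma)$ and $G'(\Sigma')$,'' but no such isotopy exists: $G$ and $G'$ are immersions with \emph{different} boundaries $L$ and $L'$, which need not even be concordant, let alone isotopic. Without an actual trace there is no singular $3$-manifold to delete, no annuli $C$ from $L$ to $L'$, and hence no cobordism $W$ satisfying Definition~\ref{defn:n-solvableEquiv}. The existence of such annuli is precisely what has to be proved; your outline assumes it. Even if one tries to repair this by pushing $G(\Sigma)$ and $G'(\Sigma')$ in from the two ends and connecting them by an abstract $\Sigma\times[0,1]$, the map $\Sigma\times[0,1]\to S^3\times[0,1]$ is badly singular and there is no evident surgery that produces a manifold $W$ with the required $H_1$, $H_2$, and $\pi_1$-depth properties. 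Your $\pi_1$-bookkeeping paragraph is likewise not an argument: saying the fusion arcs can be ``carried along rigidly'' presupposes a product structure you have not built.

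The paper avoids this entirely by a different route. It first reduces (via Lemma~\ref{lem: S-equiv to bdry}) to the case that $G'$ is embedded, then forms an exterior band sum $L\#_\delta(-L')$ and observes that the fusion surface $G\#_\delta(-G')$ has Seifert form $V_G\oplus(-V_G)$. A standard diagonal basis $\{s_i-\phi(s_i),\,t_i-\phi(t_i)\}$ exhibits this as metabolic in the sense of Definition~\ref{defn: metabolic form}, so Proposition~\ref{prop: metabolic implies 0.5 solvable} gives that $L\#_\delta(-L')$ is $0.5$-solvable. Proposition~\ref{prop: solvability and interior sum} then converts this into a $0.5$-solve-equivalence between $L$ and $L'$. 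The point is that the hard $4$-dimensional construction and the $\pi_1$-depth verification are packaged into Proposition~\ref{prop: metabolic implies 0.5 solvable} (and ultimately Theorem~\ref{thm: metabolic C-cplx implies 0.5-sol}, which uses double-delta moves rather than any direct cobordism construction); the present theorem reduces to pure Seifert-form algebra.
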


\begin{proof}By Lemma~\ref{lem: S-equiv to bdry} there is an embedding $G'':\Sigma''\into S^3$ so that $G''$ has Seifert form isomorphic to the Seifert form of $G$.  As a consequence it isomorphic to the Seifert form of $G'$.  Since $0.5$-solve-equivalence is an equivalence relation, we see that it suffices to prove the theorem in the case that $L'$ is a boundary link and $G'$ is an embedding.  

Let $L$ be a fusion boundary link with fusion complex $G:\Sigma\immerse S^3$.  Let $L'$ be a boundary link with embedded Seifert surface $G':\Sigma'\into S^3$.  Assume further that $G$ and $G'$ have isomorphic Seifert forms, so that there is an orientation preserving homeomorphism $\phi\colon\Sigma \to \Sigma'$ so that $V_G(\alpha, \beta) = V_{G'}(\phi_*(\alpha), \phi_*(\beta))$ for all $\alpha, \beta \in H_1(\Sigma)$.  Let $\{a_i, b_i\}$ be the fusion arcs of $G$.  

Recall that for any link $J$,  $-J$ is the reverse of the mirror image of $J$.  A fusion complex $H\colon\Sigma\immerse S^3$ for $J$ gives rise to a fusion complex $-H$ for $-J$ as we presently describe. Let $-\Sigma$ be the orientation reverse of $\Sigma$, $r\colon -\Sigma\to \Sigma$ be the identity map and $m\colon S^3\to S^3$ be an orientation reversing homeomorphism.  $-J$ admits a fusion complex denoted $-H$ given by $-H:= m\circ H\circ r\colon-\Sigma\to S^3$.  
For any $\alpha \in H_1(-\Sigma)$, 
$$
(-H)^+(\alpha) = (m\circ H\circ r)^+(\alpha) = (m\circ H)^-(\alpha)=m(H^+(\alpha))
$$
where the second equality follows since the positive normal to $H\circ r$ is the negative normal to $H$, and the third follows since $m:S^3\to S^3$ sends the positive normal vector to $H$ to the negative normal vector to $m(H)$.  Similarly for any $\beta\in H_1(-\Sigma)$, 
$$
(-H)(\beta) = (m\circ H\circ r)(\beta) = (m\circ H)(\beta)=m(H(\beta)).
$$
Finally, since $m:S^3\to S^3$ is orientation reversing, 
$$V_{-H}(\alpha, \beta) = \lk(m(H^+(\alpha)), m(H(\beta))) = -\lk(H^+(\alpha), H(\beta)) = -V_H(\alpha, \beta)$$

Let $\delta_1, \dots, \delta_n$ be bands for an exterior band sum of $L$ with $-L'$.  We further require that these bands be disjoint from the interiors of $G$ and $G'$.  A fusion complex  for $L\#_{\delta}-L'$ can now be obtained by starting with $G\cup G'$ and adding bands $\delta_1,\dots, \delta_n$ to it. The resulting surface is denoted by $G\#_\delta -G'\colon \Sigma\# -\Sigma \immerse S^3$.  

Let $\{s_i, t_i\}_{1\leq i \leq g}$ be a collection of embedded simple closed curves in $\Sigma$ all disjoint except that for each $i$, $s_i$ intersects $t_i$ transversely in a single point and so that their homology classes form a symplectic basis for $H_1(\Sigma)$.
Since the fusion arcs $a_1, a_2,\dots a_k$ each separate and are pairwise disjoint from each other, we can assume that for all $i$ and $j$, the closed curves $\{s_i, t_i\}_{1\leq i \leq g}$ are all disjoint from all the fusion arcs in $\Sigma$.    It follows that the homology classes of $\{s_i, t_i, \phi(s_i),\phi(t_i)\}_{1\leq i \leq g}$ forms a symplectic basis for $\Sigma\# -\Sigma$. With respect to this basis 
$$
V_{G\#_\delta -G'} = V_{G}\oplus V_{-G'} =  V_{G}\oplus -V_{G}
$$

As in Figure~\ref{fig:get Metab basis}, we may band these together to get a collection of $2g$ nonseparating disjoint simple closed curves 
$$\{s_i -\phi(s_i), t_i -\phi(t_i)\}_{1\leq i \leq g}$$
in the genus $2g$ surface $\Sigma\# -\Sigma$. These have intersection duals $\{t_i, \phi(s_i)\}_{1\leq i \leq g}$, in the sense that 
$$\{s_i -\phi(s_i), t_i, t_i -\phi(t_i), \phi(s_i)\}_{1\leq i \leq g}$$
are all disjoint except that for all $i$, $s_i-\phi(s_i)$ intersects $t_i$ and $t_i -\phi(t_i)$ intersects $s_i$ transversely in a single point.  Additionally, as we arranged that the fusion arcs $\{a_i\}$ for $G$ are disjoint from $s_i$ and $t_i$, it follows that $s_i$ and $t_i$ are disjoint from the fusion arcs of $G\#-G'$.
By direct computation, 
$$V_{G\#_\delta -G'}(s_i -\phi(s_i), t_j -\phi(t_j)) = V_{G}(s_i, t_j) + V_{-G'}(-\phi(s_i), -\phi(t_j)) = V_{G}(s_i, t_j) - V_{G}(s_i, t_j)=0.$$
The argument that $V_{G\#_\delta -G'}$ vanishes on the rest of $\{s_i +\phi(s_i), t_i + \phi(t_i)\}_{1 \leq i \leq g}$ is identical.  
 We conclude that $V_{G\#_\delta -G'}$ satisfies Definition~\ref{defn: metabolic form}.  Finally, by Theorem~\ref{prop: metabolic implies 0.5 solvable} we have that $L\#_\delta -L'$ is $0.5$-solvable, and by Proposition~\ref{prop: solvability and interior sum} $L$ is $0.5$-solve-equivalent to $L'$.\end{proof}

\vspace{.8cm}
  \begin{figure}[h]
\begin{picture}(420,100)
%\put(420,100){*}
\put(0,0){\includegraphics[angle = 90, width=.48\textwidth]{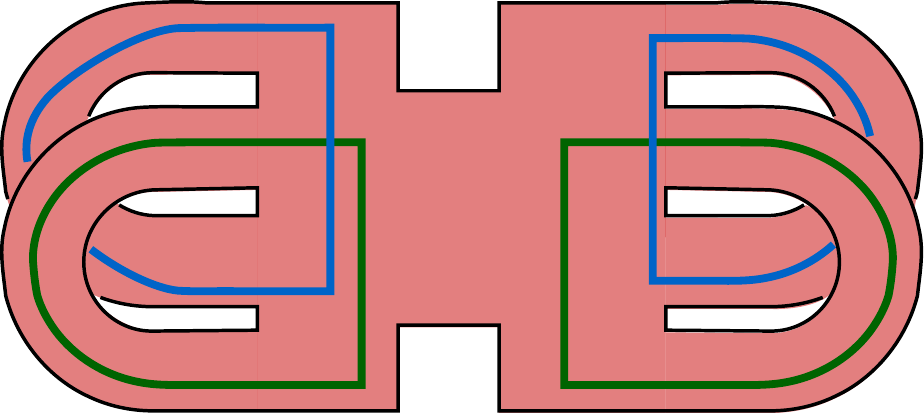}}
\put(70,10){\small{$s_i$}}
\put(140,10){\small{$\phi(s_i)$}}
\put(70, 80){\small{$t_i$}}
\put(130,80){\small{$\phi(t_i)$}}

\put(220,0){\includegraphics[angle = 90, width=.48\textwidth]{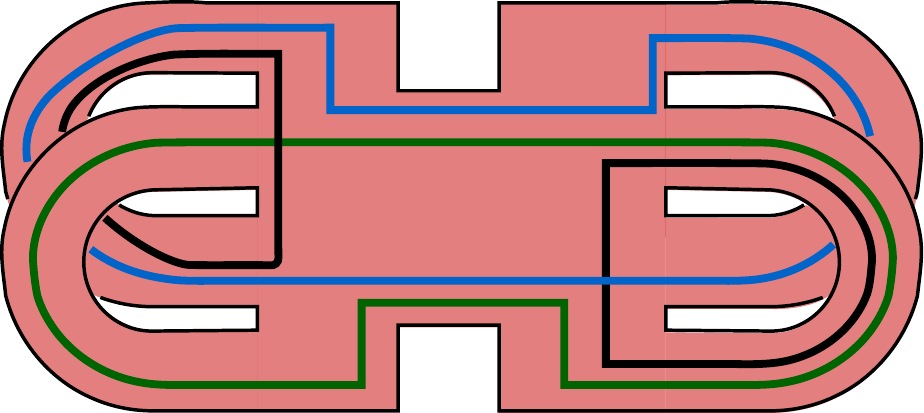}}

\put(267,10){\tiny{$s_i-\phi(s_i)$}}
\put(288,50){\small{$t_i$}}
\put(340,75){\tiny{$t_i-\phi(t_i)$}}
\put(343,45){\tiny{$\phi(s_i)$}}

\end{picture}\vspace{.5cm}
\caption{Left: Some curves in a symplectic basis for $\Sigma\# -\Sigma$.  Right: a new symplectic basis for $\Sigma\#_\delta -\Sigma'$. }\label{fig:get Metab basis}
\end{figure}

We are now ready for a very short proof of Theorem~\ref{thm: HBL = BL}, whose statement we recall.

\begin{reptheorem}{thm: HBL = BL}
Any sublink of a homology boundary link is $0.5$-solve-equivalent to a boundary link. 
\end{reptheorem}

\begin{proof}
Let $L$ be a sublink of a homology boundary link $L^1$.  By \cite[Corollary 4.1]{CocLev91} $L^1$ is concordant to a fusion of a boundary link, $L^2$.  Let $G$ be a fusion surface for $L^2$. By Lemma~\ref{lem: S-equiv to bdry} there is a boundary link $L^3$ with an embedded Seifert surface $G'\colon\Sigma'\into S^3$ so that $G$ and $G'$ have isomorphic Seifert forms. Finally, by Theorem~\ref{thm: S-equiv implies 0.5-sol}  $L^2$ is $0.5$-solve-equivalent to $L^3$.  

Thus, $L$ is a sublink of a link that is $0.5$-solve-equivalent to a boundary link.  As sublinks of $0.5$-solve-equivalent links are $0.5$-solve-equivalent, and a sublink of a boundary link is again a boundary link, this completes the proof.  \end{proof}

\section{Iterated satellite operations and highly solvable knots}\label{sect:High sol background}

For the remainder of the paper, we shift our focus away from $0.5$-solve-equivalence to the comparison of homology boundary links and boundary links deep in the solvable filtration.  In this section, we recall a construction which is used  to produce links that sit in $\F_n$ for $n$ large but which often do not sit in the subsequent term $\F_{n.5}$.  

A \emph{generalized doubling operator}  \cite{CHL4} $R_{\eta}$ consists of a knot $R$ and a link $\eta$ in $S^3 \sm \nu R$ that forms a trivial link in $S^3$ and whose every component is nullhomologous in the exterior of $R$.  When $\eta$ has only one component, $R_\eta$ is a \emph{doubling operator}.  
%See for example \cite{CHL4}.  
Given a knot $J$,  the \emph{result of the  generalized doubling operation} $R_{\eta}(J)$ is obtained by tying the strands of $R$ passing through each of the disks bounded by the components of $\eta$ into $J$. An example is depicted in Figure~\ref{fig: doubling}.  The application of generalized  doubling operators is a standard strategy used to construct knots and links deep in the solvable filtration.

\begin{figure}[h]

\begin{subfigure}{.3\textwidth}
         \centering
\begin{picture}(90,110)
%\put(90,100){*}
\put(-5,5){\includegraphics[height=.15\textheight]{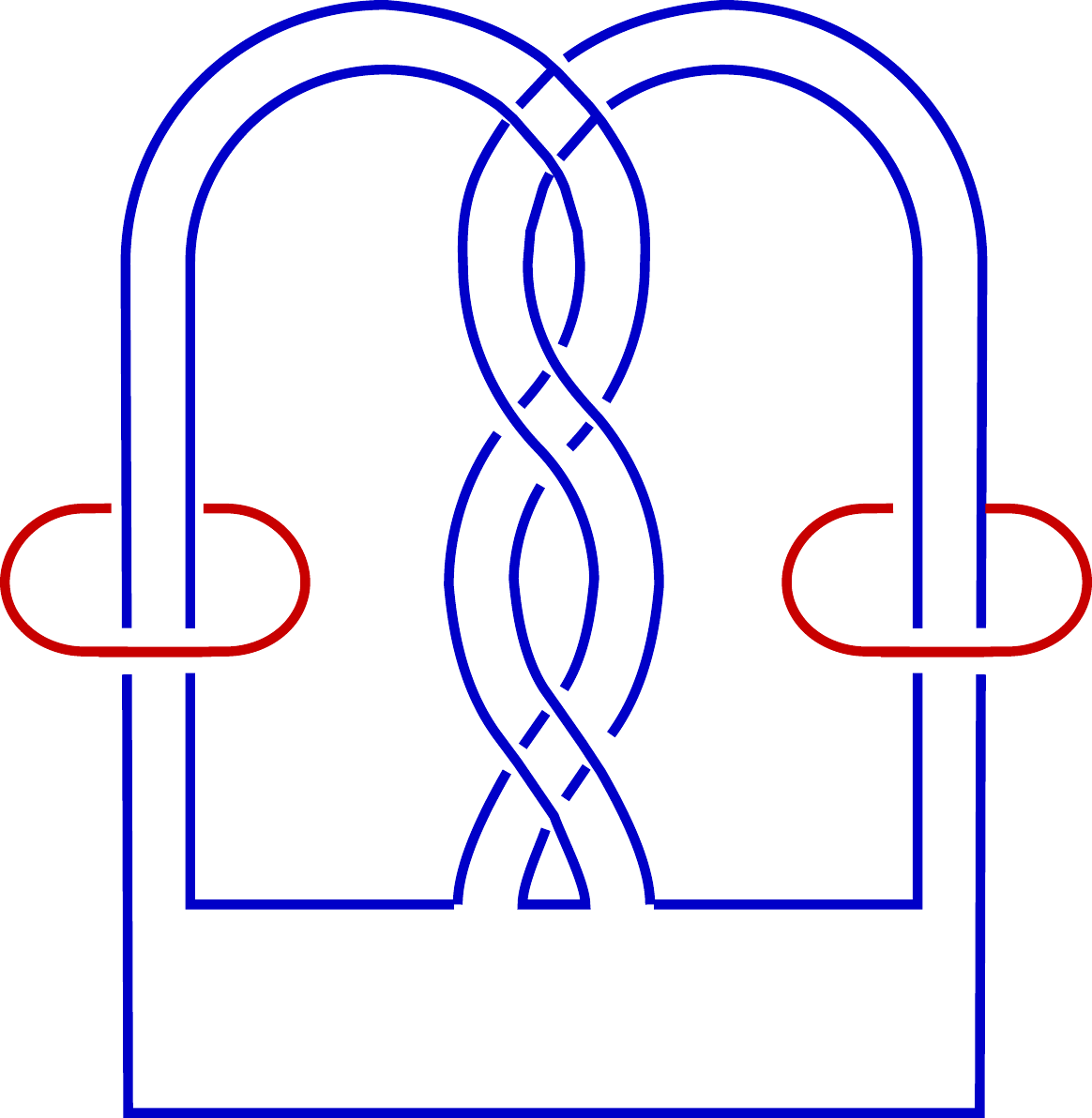}}
\put(40,10){$R$}
\put(-5,64){$\eta_1$}
\put(85,64){$\eta_2$}
\end{picture}
\caption{}
\end{subfigure}
\begin{subfigure}{.3\textwidth}
         \centering
\begin{picture}(100,100)
%\put(100,100){*}
\put(0,0){\includegraphics[height=.15\textheight]{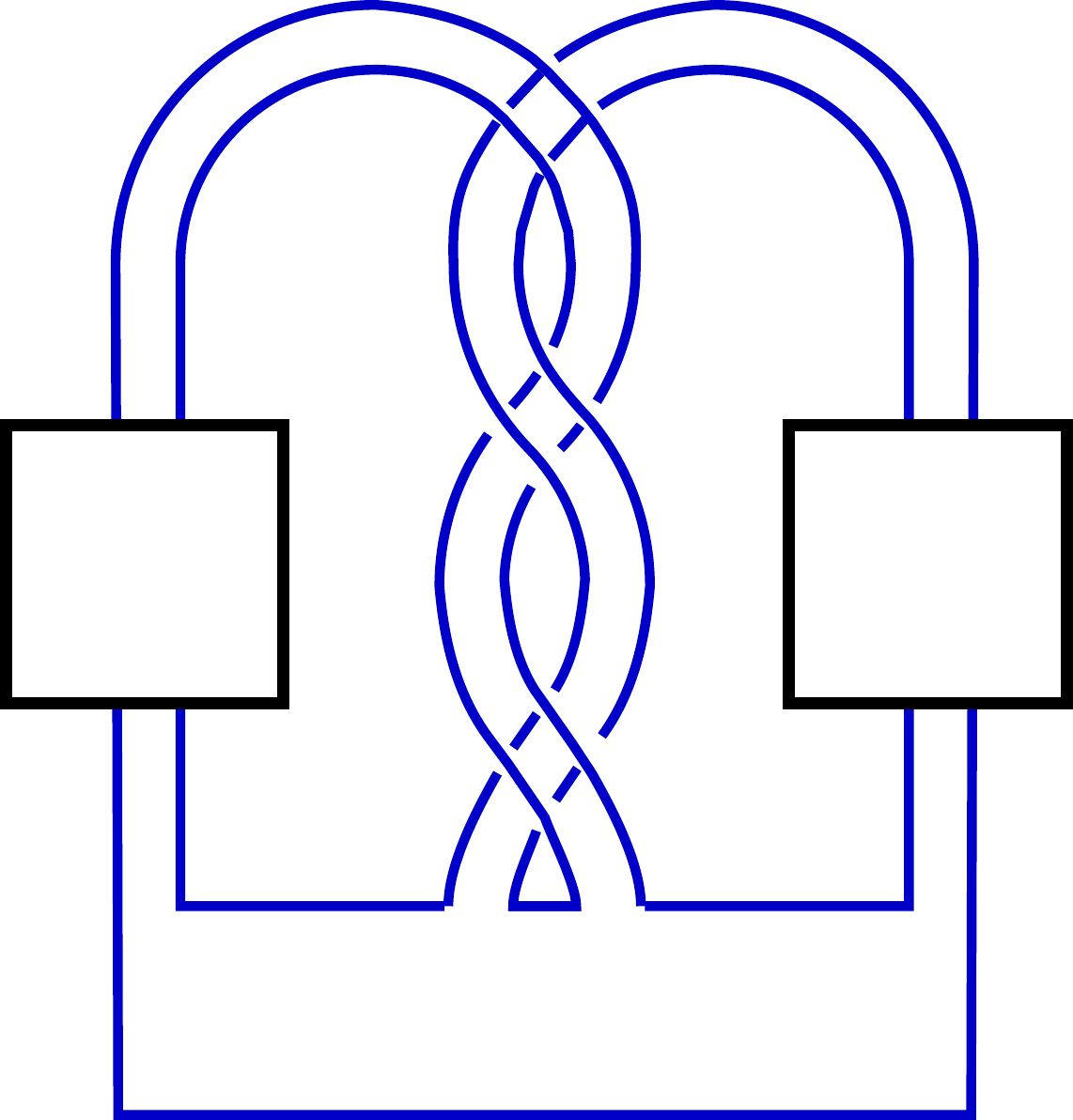}}
\put(30,7){$R_{\eta_1,\eta_2}(J)$}
\put(8.3,45){$J$}
\put(79,45){$J$}
\end{picture}
\caption{}
\end{subfigure}
\caption{Left: A generalized doubling operator $R_{\eta_1, \eta_2}$.  Right:  The result of the  generalized  doubling operation $R_{\eta_1,\eta_2}(J)$.}\label{fig: doubling}
\end{figure} 

\begin{proposition}\label{prop: increase sol}Let $n, p\in \frac{1}{2}\N_0$ and $m, k\in \N_0$.  Suppose $R_{\eta}$ is a generalized doubling operator where $R$ is an $n$-solvable knot with $n$-solution $W$, and the homotopy classes of the components of $\eta=\eta_1\cup\dots \cup\eta_m$ lie in $\pi_1(W)^{(k)}$.  If $J$ is a $p$-solvable knot, then $R_\eta(J)$ is $\min(n, p+k)$-solvable.  
\end{proposition}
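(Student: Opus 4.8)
The plan is to build an $\min(n,p+k)$-solution for $R_\eta(J)$ by grafting a solution for the satellite knot $J$ onto a solution for the pattern knot $R$ along the solid tori carved out by $\eta$. First I would set up the standard satellite picture: $R_\eta(J)$ is obtained from $R$ by removing a tubular neighborhood of each component $\eta_i$ and regluing the exterior of $J$ (appropriately framed, since each $\eta_i$ is nullhomologous and unknotted), so that $M_{R_\eta(J)}$ is obtained from $M_R$ by the corresponding ``infection'' along the curves $\eta_i$. Concretely, one cuts $M_R$ along the tori $\partial\nu\eta_i$, removes solid tori $\nu\eta_i$, and glues in $m$ copies of $M_J \smallsetminus \nu(\text{meridian})$; equivalently $M_{R_\eta(J)}$ is the result of doing $0$-surgery-style infection. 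I would then take $W$, the given $n$-solution for $R$, and $V$, a $p$-solution for $J$, form for each $i$ the manifold obtained from $V$ by... —actually the cleanest formulation: let $E_i \cong \eta_i \times D^2$ be pushed into $W$ (using that $\eta_i$ bounds a disk, so $\eta_i\times D^2$ embeds in $W$ with the $0$-framing), and build $X := \big(W \smallsetminus \bigsqcup_i \nu(\eta_i\times\{0\})\big) \cup_{\bigsqcup T^3} \big(\bigsqcup_i (M_J\smallsetminus\nu\mu_J)\times S^1\text{-type piece}\big)$ — I'd use the standard construction from the Cochran--Harvey--Leidy literature, where one glues $V \times$ (a piece) along the infection circles. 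The upshot is a $4$-manifold $\widehat W$ bounded by $M_{R_\eta(J)}$.

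The key steps, in order. (1) Verify $H_1(\widehat W)\cong\Z$ generated by the meridian: this follows from a Mayer--Vietoris computation using $H_1(W)\cong\Z$, $H_1(V)\cong\Z$ (generated by the meridian $\mu_J$, which dies when glued to the nullhomologous $\eta_i$), and the fact that the $\eta_i$ are nullhomologous in both pieces. (2) Produce the Lagrangian/dual surfaces for $\widehat W$: the surfaces $X_j^W, Y_j^W$ from the $n$-solution $W$ survive (push them off the $\eta_i$), and for each $i$ the surfaces $X_\ell^V, Y_\ell^V$ from the $p$-solution $V$ get glued in; a Mayer--Vietoris argument shows together they form a basis for $H_2(\widehat W)$ with the correct intersection pattern. (3) The derived-series bound, which is the crux: for the $W$-surfaces, $\pi_1 \hookrightarrow \pi_1(W)^{(n)}$, and since there is a map $\pi_1(\widehat W) \to \pi_1(W)$ (collapsing each glued-in $M_J$ piece) one gets them into $\pi_1(\widehat W)^{(n)}$ — here I'd need the standard fact that the infected surfaces sit in the kernel appropriately so that the map on $\pi_1$ behaves. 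For the $V$-surfaces: $\pi_1(X_\ell^V) \to \pi_1(V)^{(p)}$, and $\pi_1$ of the glued-in copy of $M_J$-piece maps into $\pi_1(\widehat W)$ with image in the normal closure of the $\eta_i$, which, because the homotopy classes of $\eta_i$ lie in $\pi_1(W)^{(k)}$, lands in $\pi_1(\widehat W)^{(k)}$. Composing, the image of $\pi_1(X_\ell^V)$ lands in $(\pi_1(\widehat W)^{(k)})^{(p)} = \pi_1(\widehat W)^{(p+k)}$ — using the elementary group-theory fact that $(G^{(k)})^{(p)} \subseteq G^{(p+k)}$ (and its half-integer analogue). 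Taking the minimum of the two bounds $n$ and $p+k$ over all the surfaces gives that $\widehat W$ is a $\min(n,p+k)$-solution.

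The main obstacle is step (3), the $\pi_1$-bookkeeping: one must choose the gluing so that (a) the meridian-longitude identifications are correct (the $0$-framing is forced by $\eta_i$ being nullhomologous, but one should check the longitude of $J$ glues to $\eta_i$), (b) the retraction $\pi_1(\widehat W)\to\pi_1(W)$ actually exists and kills the infected pieces, and (c) the image of $\pi_1$ of each glued-in piece genuinely lies in the $k$-th derived subgroup — this requires that $\pi_1$ of the infection piece is normally generated by (a conjugate of) $\eta_i$ and a meridian, with the meridian also behaving, plus an appeal to the hypothesis that the \emph{homotopy class} of $\eta_i$ (not merely its homology class) lies in $\pi_1(W)^{(k)}$. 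I would handle the $n.5$ and $p.5$ half-integer cases by noting that the extra condition (Lagrangians into the next derived subgroup) is preserved verbatim under the same construction, since it only constrains the $X$-surfaces and the same composition-of-derived-series inequality applies. This is essentially the argument of \cite{CHL4} and \cite{COT03} adapted to links, so I would cite those for the routine verifications and only spell out the derived-series estimate in detail.
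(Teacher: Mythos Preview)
Your strategy is the same as the paper's and is correct in outline, but your description of the $4$-manifold is more complicated than necessary and never quite lands. The paper does not push the $\eta_i$ into the interior of $W$ or excise anything: it simply takes $W$ together with $m$ disjoint copies $U_1,\dots,U_m$ of a $p$-solution $U$ for $J$ and glues a solid torus neighborhood of the meridian $\mu_J\subset M_J=\partial U_i$ to a solid torus neighborhood of $\eta_i\subset M_R=\partial W$, identifying $0$-framed longitudes with meridians; the resulting $Z$ has $\partial Z\cong M_{R_\eta(J)}$ directly. With this gluing there is no need for the retraction $\pi_1(\widehat W)\to\pi_1(W)$ you propose: since $W\subset Z$, the inclusion already sends $\pi_1(W)^{(n)}$ into $\pi_1(Z)^{(n)}$ by functoriality, handling the $W$-surfaces immediately. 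For the $U_j$-surfaces the paper argues exactly as you do: the meridian of $J$ is identified with $\eta_j$, so $\im(\pi_1(U_j)\to\pi_1(Z))\subset\pi_1(Z)^{(k)}$, and then $\im(\pi_1(A_i)\to\pi_1(Z))\subset\im(\pi_1(U_j)\to\pi_1(Z))^{(p)}\subset\pi_1(Z)^{(k+p)}$.
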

\begin{proof}
Similar results appear elsewhere in the literature and have identical proofs, for example \cite[Proposition 3.1]{CocTei04} and  \cite[Proposition 2.7]{CHL2011}. The precise statement above does not seem to appear.  As such we will provide a short argument in the case the $n,p\in \N_0$. The argument when either or both of $n$ and $p$ are half-integers is identical.  Let $W$ be an $n$-solution for $R$ and $X_i, Y_i\subseteq W$ be the $n$-Lagrangians and $n$-duals in $W$.  Let $U_1,\dots, U_m$ be copies of a $p$-solution $U$ for $J$, where $m$ is the number of components of $\eta$, and let $A_i, B_i\subseteq U$ be $p$-Lagrangians and $p$-duals.

Let $Z$ be the 4-manifold obtained by starting with the disjoint union $W\sqcup U_1\sqcup\dots\sqcup U_m$ and then identifying a neighborhood of the meridian of $J$ in $M_J = \bdry U_i$ with a neighborhood of $\eta_i$ in $M_R=\bdry W$ so that $0$-framed longitudes are identified with meridians.  We assert that $Z$ is an $n'$-solution for $R_{\eta}(J)$ where $n' = \min(n, p+k)$.  The fact that $\bdry Z\cong M_{R_{\eta}(J)}$ follows since the meridian of $J_i$ in $M_{J_i}$ is isotopic to the core of the solid torus added to $S^3 \sm J_i$ to produce the zero surgery.  The facts that $H_1(Z)$ is freely generated by the meridians of the components of $R_{\eta}(J)$ and that $H_2(Z)\cong H_2(W)\oplus \Oplus_j H_2(U_j)$ follow from a Mayer-Veitoris argument exactly as in \cite[Proposition 2.7]{CHL2011}.  Thus, the homology classes of the Lagrangians and duals in $W$ together with the Lagrangians and duals in the various $U_i$ give a basis for $H_2(Z)$.  

All that remains is to check that condition \pref{lift basis} of Definition~\ref{defn:solvable} holds.  For any Lagrangian $X_i$ of $W$, $\pi_1(X_i)\to \pi_1(Z)$ factors though $\pi_1(W)$.  As the image of $\pi_1(X_i)\to \pi_1(W)$ is contained in $\pi_1(W)^{(n)}$, it follows that $\im(\pi_1(X_i)\to \pi_1(Z))\subseteq \pi_1(Z)^{(n)}\subseteq \pi_1(Z)^{(n')}$.  The same argument applies to the duals.  

The meridian of $J$ in $U_j$ is identified with $\eta_j$ which sits in $\pi_1(Z)^{(k)}$.  This implies that $\im(\pi_1(U_j)\to \pi_1(Z))\subseteq \pi_1(Z)^{(k)}$. (For the details in this group theoretic claim we refer the reader to the proof of \cite[Proposition 2.7]{CHL2011}.)   Now consider any Lagrangian $A_i$ of $U_j$.  Since $\im(\pi_1(A_i)\to \pi_1(U_j))\subseteq \pi_1(U_j)^{(p)}$, it follows that
$$\im\left(\pi_1(A_i)\to \pi_1(U_j)\to \pi_1(Z)\right)\subseteq 
\im\left(\pi_1(U_j)\to \pi_1(Z)\right)^{(p)}\subseteq \pi_1(Z)^{(k+p)}\subseteq \pi_1(Z)^{(n')}.$$
Again, the same argument applies to the dual $B_i$. Thus, the surfaces $X_i$, $Y_i$, $A_i$, and $B_i$ together form $n'$-Lagrangians and $n'$-duals for $Z$, completing the proof.
\end{proof}

Iterated application of  generalized doubling operators produces highly solvable knots.   Indeed, let $J$ be a $0$-solvable knot and $R$ be a slice knot, so that $R$ is $n$-solvable for every $n\in \N$.  Let $R_\eta$ be a generalized doubling operator.  By the definition of a generalized doubling operator, each component of $\eta$ sits in $\pi_1(W)^{(1)}$.  An inductive argument using Proposition~\ref{prop: increase sol} concludes that $$R_\eta^n(J) := \overset{n\text{ times}}{\overbrace{R_\eta\circ \dots \circ R_\eta}}(J)$$ is $n$-solvable.     With a careful choice of $R$ and $J$, this construction produces knots which are not $(n.5)$-solvable.  In the following proposition, $\rho_0(J)$ is  $\frac{1}{2\pi}  \int_S \sigma_\omega(J)~d\omega$ where $S$ is the unit circle in $\mathbb{C}$ and $\sigma_\omega$ is the Levine-Tristram signature function of $J$.  

\begin{proposition}[{\cite[Theorem 9.5]{CHL09}}]\label{prop:CHL}
Let $R_\eta$ be a generalized doubling operator with $R$ a slice knot. If $\eta=\eta_1\cup\eta_2$ has two components and  $\Bl_R([\widetilde \eta_{1}],[\widetilde\eta_{2}])\neq 0$
then there exists a constant $D$ so that for any knot with $\Arf(J)=0$ and $|\rho_0(J)|>D$, the knot $R^n_{\eta}(J)$ represents a nontrivial element in $\F_n/\F_{n.5}$.\end{proposition}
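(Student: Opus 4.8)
We outline the argument; the full details appear in \cite[Theorem 9.5]{CHL09}.

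First I would establish that $R^n_\eta(J)$ is $n$-solvable, which is exactly the inductive observation recorded in the paragraph before the statement. Since $\Arf(J)=0$, the knot $J$ is $0$-solvable, and since $R$ is slice it is $m$-solvable for every $m$. Each component of $\eta$ is nullhomologous in the exterior of $R$, hence lies in $\pi_1(W)^{(1)}$ for any $n$-solution $W$ of $R$, so Proposition~\ref{prop: increase sol} applied with $k=1$ shows by induction on $j$ that $R^j_\eta(J)$ is $\min(n,j)$-solvable; taking $j=n$ gives the claim.

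The harder half is to show $R^n_\eta(J)\notin\F_{n.5}$. The plan is a contradiction: suppose $M:=M_{R^n_\eta(J)}$ bounds an $(n.5)$-solution $W$, and produce a poly-(torsion-free-abelian) coefficient system $\phi\colon\pi_1(M)\to\Gamma$ with $\Gamma^{(n+1)}=1$ that extends over $\pi_1(W)$ and has $\rho^{(2)}(M,\phi)\neq 0$; this contradicts the Cochran--Orr--Teichner von Neumann signature obstruction \cite[Theorem 4.2]{COT03}, which forces $\rho^{(2)}(M,\phi)=0$ once $\phi$ extends over an $(n.5)$-solution. The system $\phi$ is built by the standard inductive construction mirroring the $n$-fold satellite structure of $R^n_\eta(J)$: at each stage one uses that an $(n.5)$-solution produces a self-annihilating submodule (a metabolizer $P=P^{\perp}$) for the relevant higher-order Blanchfield form on a derived cover, and checks that $P$ cannot contain the homology classes carried by the infection curves, so the representation survives one derived level deeper. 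At the innermost stage this non-containment is precisely the hypothesis $\Bl_R([\widetilde \eta_{1}],[\widetilde\eta_{2}])\neq 0$: a metabolizer of $\Bl_R$ cannot contain both classes, so at least one of the innermost copies of $M_J$ inherits a nontrivial rank-one coefficient system, and one arranges the surviving copies to contribute with a common sign.

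To finish I would evaluate $\rho^{(2)}(M,\phi)$ using additivity of the von Neumann $\rho$-invariant under the torus decomposition assembling $M$ from the pieces $M_R$ (which occur $2^n-1$ times) and $M_J$ (which occur $2^n$ times), with $\phi$ restricted to each piece. A copy of $M_J$ contributes $0$ if $\phi$ restricts trivially there and $\pm\rho_0(J)$ if it restricts to the nontrivial abelianization, so the total $M_J$-contribution is $m\,\rho_0(J)$ for some integer $m\ge 1$. Each $M_R$-piece is a piece of zero-surgery on the slice knot $R$, and the Cheeger--Gromov universal bound furnishes a constant $C_R$ with $|\rho^{(2)}(M_R,\psi)|\le C_R$ for every coefficient system $\psi$; since there are at most $2^n-1$ such pieces, their total contribution is bounded by $D:=(2^n-1)C_R$, which depends only on $R$, $\eta$, and $n$. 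Hence if $|\rho_0(J)|>D$ then
\[
|\rho^{(2)}(M,\phi)|\;\ge\;|m\,\rho_0(J)|-D\;\ge\;|\rho_0(J)|-D\;>\;0,
\]
the desired contradiction, so $R^n_\eta(J)$ represents a nontrivial element of $\F_n/\F_{n.5}$. The step I expect to be the main obstacle is the inductive construction of $\phi$: one must verify at every stage that the metabolizer furnished by $W$ genuinely misses the infection classes --- so the representation can be pushed to the next derived quotient --- and that the surviving $M_J$-copies can be activated without cancellation. This is where noncommutative localization and nonsingularity of the higher-order Blanchfield forms do the real work, anchored at the bottom by $\Bl_R([\widetilde \eta_{1}],[\widetilde\eta_{2}])\neq 0$; the remaining ingredients (the $n$-solvability step, additivity of $\rho^{(2)}$, the Cheeger--Gromov bound, and the Cochran--Orr--Teichner vanishing theorem) are by now routine.
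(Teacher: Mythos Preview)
The paper does not supply a proof of this proposition at all: it is quoted as \cite[Theorem 9.5]{CHL09} and used as a black box in Section~\ref{sect: HBL =/= BL for n.5}. Your outline is a faithful sketch of the Cochran--Harvey--Leidy argument from that reference, correctly identifying the key ingredients (the inductive extension of the coefficient system through metabolizers of higher-order Blanchfield forms, additivity of $\rho^{(2)}$ along the torus decomposition, and the Cheeger--Gromov bound on the $M_R$-pieces), so there is nothing to compare.
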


\section{Homology boundary links deep in the solvable filtration: The proof of Theorem~\ref{thm: main}}\label{sect: HBL =/= BL for n.5}
We first prove the following proposition.  

\begin{proposition}\label{prop:2ndcommu}
Let $L=L_1\cup L_2$ be a 2-component link and $n\in \frac{1}{2}\mathbb{N}_0$.  
If $L$ is $n$-solve-equivalent to a boundary link and $L_1$ is $n$-solvable, then there is an $n$-solution $Z$ for $L_1$ such that the homotopy class of $L_2$ lies in $\pi_1(Z)^{(2)}$.  
\end{proposition}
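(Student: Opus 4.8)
The plan is to cap off the boundary-link side of an $n$-solve-equivalence with an $n$-solution for $J_1$ in disk form, delete a slice disk for $L_1$, and then read off the statement about $L_2$ from the boundary-link structure of $J$; this follows the shape of the construction in the proof of Proposition~\ref{prop: Blanchfield obstructs HBL}.  Fix an $n$-solve-equivalence $(W, C_1\cup C_2)$ from $L = L_1\cup L_2$ to a boundary link $J = J_1\cup J_2$.  Forgetting the annulus $C_2$, the pair $(W,C_1)$ is an $n$-solve-equivalence from $L_1$ to $J_1$, so since $L_1$ is $n$-solvable and being $n$-solvable is invariant under $n$-solve-equivalence, $J_1$ is $n$-solvable.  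By Proposition~\ref{prop: solvability in terms of disks} there is then a $4$-manifold $V$ with $\bdry V = S^3$, which we identify with the $J$-side of $W$ so that this $S^3$ contains $J$, in which $J_1$ bounds a smoothly embedded disk $\Delta$, with $H_1(V)=0$ and with $n$-Lagrangians and $n$-duals lying in $V\sm\nu\Delta$ (plus the additional condition on the Lagrangians when $n$ is a half-integer).  Let $M := W\cup_{S^3} V$; then $\bdry M = S^3\supseteq L$, and $D := C_1\cup_{J_1}\Delta$ is a smoothly embedded disk in $M$ bounded by $L_1$.

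I claim $Z := M\sm\nu D$ is the desired $n$-solution.  Since $H_1(W)=H_1(V)=0$ we get $H_1(M)=0$, and Mayer--Vietoris gives $H_2(M)\cong H_2(W)\oplus H_2(V)$; the $n$-Lagrangians and $n$-duals of $W$ (lying in $W\sm\nu C\subseteq M\sm\nu D$) together with those of $V$ (lying in $V\sm\nu\Delta\subseteq M\sm\nu D$) are mutually disjoint --- one family in $\interior W$, the other in $V$ --- and represent a basis of $H_2(M)$.  Hence by Proposition~\ref{prop: solvability in terms of disks}, including its half-integer refinement, $Z = M\sm\nu D$ is an $n$-solution for $L_1$; in particular $\bdry Z = M_{L_1}$.

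It remains to control $L_2$, and this is where the boundary-link hypothesis is used.  Since $J$ is a boundary link, $J_2$ bounds a Seifert surface disjoint from one for $J_1$; this surface lifts to the infinite cyclic cover of $S^3\sm\nu J_1$, so $[\widetilde{J_2}]=0$ in the Alexander module $\A(J_1)\cong\pi_1(S^3\sm\nu J_1)^{(1)}/\pi_1(S^3\sm\nu J_1)^{(2)}$, and therefore $J_2$ represents an element of $\pi_1(S^3\sm\nu J_1)^{(2)}$.  Now $S^3\sm\nu J_1$ is disjoint from $D = C_1\cup\Delta$, so $S^3\sm\nu J_1\subseteq Z$, and by naturality of the derived series the image of $[J_2]$ in $\pi_1(Z)$ lies in $\pi_1(Z)^{(2)}$.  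Finally $C_2$ is disjoint from $D$ --- it is disjoint from $C_1$, and it lies in $W$ while $\Delta$ lies in $V$ --- so $C_2\subseteq Z$ provides a free homotopy in $Z$ from $L_2\subseteq M_{L_1} = \bdry Z$ to $J_2$; thus the conjugacy class of $L_2$ in $\pi_1(Z)$ equals that of $J_2$ and lies in $\pi_1(Z)^{(2)}$, completing the argument.

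I expect the only real obstacle to be the routine bookkeeping that $Z$ meets every clause of Definition~\ref{defn:solvable}: that the two families of Lagrangians and duals remain a basis for $H_2(Z)$ after the gluing and disk deletion, that their $\pi_1$-images stay in the correct derived subgroup of $\pi_1(Z)$ under the inclusion-induced maps, and that $H_1(Z)$ is infinite cyclic generated by the meridian of $L_1$.  All of this is of the same flavor as the Mayer--Vietoris arguments in the proofs of Propositions~\ref{prop: solvability in terms of disks} and~\ref{prop: increase sol}, so I would present it briefly rather than in full.
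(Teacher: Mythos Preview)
Your argument is correct and essentially identical to the paper's.  The only cosmetic difference is that the paper glues $W\sm\nu C_1$ directly to an $n$-solution $W_1$ for $J_1$ along $S^3\sm\nu J_1$, whereas you pass through the disk reformulation of Proposition~\ref{prop: solvability in terms of disks}; since $V\sm\nu\Delta$ is exactly such a $W_1$, your $Z=M\sm\nu D$ coincides with the paper's $Z$.  One small wording point: the paper's $\A(J_1)$ is the \emph{rational} Alexander module, so the identification with $\pi^{(1)}/\pi^{(2)}$ is only after tensoring with $\Q$; your conclusion is still valid because the lifted Seifert surface shows $[\widetilde{J_2}]=0$ integrally.
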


When combined with Proposition~\ref{prop: increase sol} this allows us to conclude that if $R_\eta$ is a generalized doubling operator, $R$ is slice and $R\cup \eta$ is $n.5$-solvably concordant to a boundary link, then $J\mapsto R_\eta(J)$ will increase solvability by 2.  We say this more formally as Corollary~\ref{cor: bdry link doubling op}. This will unlock an avenue to a proof of Theorem~\ref{thm: main} by finding a homology boundary link $R\cup \eta$ for which $J\mapsto R_\eta(J)$ does not increase solvability by 2.

\begin{proof}[Proof of Proposition~\ref{prop:2ndcommu}]
As has been our habit, we give the proof when $n\in \N_0$.  When $n
%\in \frac{1}{2}\N_0\sm \N_0
$ is a half integer, the proof is the same.  Let $(W, C=C_1\cup C_2)$ be an $n$-solve-equivalence from $L\subseteq \bdry_+ W$ to a boundary link $J = J_1\cup J_2\subseteq \bdry_- W$.  Then $(W, C_1)$ is an $n$-solve-equivalence from $L_1$ to $J_1$.  As $L_1$ is $n$-solvable, so is $J_1$.  Let $W_1$ be an $n$-solution for $J_1$.  Notice that $\bdry_- W \sm \nu C_1$ is homeomorphic to $S^3 \sm \nu J_1$ by an orentation reversing homeomorphism.  Similarly, in $\bdry W_1\cong M_{J_1}$  we see a copy of $S^3 \sm \nu J_1$.  As in \cite[Proposition 2.4]{Davis:2020-1}, we glue $W-\nu C_1$ to $W_1$ along these copies of $S^3 \sm \nu J_1$.  Similar analysis to that in \cite[Proposition 2.4]{Davis:2020-1}  shows that this manifold, which we shall denote by $Z$, is an $n$-solution for $L_1$. We briefly sketch this analysis.

A Mayer-Veitoris argument reveals that $H_1(Z)\cong \Z$ is generated by the meridian of $L_1$ and that $H_2(Z)\cong H_2(W\sm C_1)\oplus H_2(W_1)\cong H_2(W)\oplus H_2(W_1)$ is generated by the Lagrangians and duals of $W$ and $W_1$.  The functoriality of the derived series implies that these surfaces form $n$-Lagrangians and $n$-duals for $Z$.  

It remains to check that the homotopy class of $L_2$ lies in $\pi_1(Z)^{(2)}$.  The embedded annulus $C_2\subseteq W \sm \nu C_1\subseteq Z$ gives a free homotopy from $L_2$ to $J_2$.  Since $J$ is a boundary link, the homotopy class of $J_2$ lies in $\pi_1(S^3 \sm \nu J_1)^{(2)}$.  
%Since the property of being in the second derived series is preserved by homomorphisms and also by conjugations, 
We conclude that the homotopy class of $L_2$ lies in $\pi_1(Z)^{(2)}$.\end{proof}

Given a generalized doubling operator $R_\eta$, note that if $R \cup \eta$ is $n$-solve-equivalent a boundary link and $R$ is $n$-solvable with an $n$-solution $Z$, then Proposition~\ref{prop:2ndcommu} implies that the homotopy class of  each component of $\eta$ lies in $\pi_1(Z)^{(2)}$. Using Proposition~\ref{prop: increase sol}, we immediately get the following:

\begin{corollary}\label{cor: bdry link doubling op}
Let $n\in \N$, $J$ be an $(n-1.5)$-solvable knot, and $R_\eta$ be a generalized doubling operator.  
Suppose that $R\cup\eta$ is $n.5$-solve-equivalent to a boundary link, and $R$ is $n.5$-solvable, then $R_\eta(J)$ is $n.5$-solvable.  
\end{corollary}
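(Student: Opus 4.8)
The plan is to read the corollary off from Proposition~\ref{prop:2ndcommu} and Proposition~\ref{prop: increase sol}, the only work being to line up the numerology. First I would apply Proposition~\ref{prop:2ndcommu} to the link $R\cup\eta$ with $L_1 = R$ and $L_2 = \eta$: since $R\cup\eta$ is $n.5$-solve-equivalent to a boundary link and $R$ is $n.5$-solvable, this produces an $n.5$-solution $Z$ for $R$ in which the homotopy class of every component of $\eta$ lies in $\pi_1(Z)^{(2)}$. (Proposition~\ref{prop:2ndcommu} is stated for $2$-component links, but its proof applies verbatim when $L_2 = \eta$ has several components; see the next paragraph.) Since $R_\eta$ is a generalized doubling operator and $Z$ is an $n.5$-solution for $R$ carrying all of $\eta$ into $\pi_1(Z)^{(2)}$, I would then apply Proposition~\ref{prop: increase sol} with $k = 2$ and $p = n-1.5$: it yields that $R_\eta(J)$ is $\min(n.5,\,(n-1.5)+2)$-solvable, and $(n-1.5)+2 = n.5$, which is exactly the claim.

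For the multi-component remark: in the proof of Proposition~\ref{prop:2ndcommu} one glues the given $n.5$-solve-equivalence for $R$ to an $n.5$-solution for the boundary link along a copy of a knot exterior, obtaining a \emph{single} $n.5$-solution $Z$ for $R$, and the components of the solve-equivalence annulus freely homotope $\eta$ into the corresponding components of the boundary link $J_1\cup J_2$. The only step using $2$-componentness is the conclusion that the image of this homotopy class lies in $\pi_1(S^3\sm\nu J_1)^{(2)}$, which holds because each component of $J_2$ bounds a Seifert surface disjoint from a Seifert surface for $J_1$; applied to each component $\eta_i$ in turn, this shows $[J_{2,i}]\in\pi_1(S^3\sm\nu J_1)^{(2)}$, and functoriality of the derived series carries each such class into $\pi_1(Z)^{(2)}$. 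The important feature is that one obtains a single solution $Z$ working for all of $\eta$ at once, as Proposition~\ref{prop: increase sol} requires.

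I do not expect a genuine obstacle here; the corollary is an immediate consequence of the two propositions, and the only things to watch are the two bookkeeping points above: that $\eta$ lands in the \emph{second} derived subgroup (hence $k=2$, which is precisely the content of Proposition~\ref{prop:2ndcommu}), and that the half-integer solvability indices behave as expected — in particular that the manifold $Z$ produced really satisfies the $(n.5)$-Lagrangian condition of Definition~\ref{defn:solvable}, which it does because the derived-subgroup conditions on the Lagrangians and duals of the two pieces are preserved under their inclusions into $Z$. All the substantive work has already been carried out in Proposition~\ref{prop:2ndcommu} and Proposition~\ref{prop: increase sol}.
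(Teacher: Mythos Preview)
Your proposal is correct and follows essentially the same route as the paper: apply Proposition~\ref{prop:2ndcommu} (extended componentwise to the case where $\eta$ has several components, as you note) to get an $n.5$-solution $Z$ for $R$ with each $\eta_i$ in $\pi_1(Z)^{(2)}$, then invoke Proposition~\ref{prop: increase sol} with $k=2$ and $p=n-1.5$. One small imprecision: in your second paragraph the gluing is to an $n.5$-solution for $J_1$ (the component of the boundary link corresponding to $R$), not for the full boundary link.
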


Now, we are ready to prove Theorem~\ref{thm: main}
\begin{proof}[Proof of Theorem~\ref{thm: main}]
Let $n$ be a positive integer. We construct a knot $J_n$ so that the link $L(J_n)=L_1 \cup L_2$ of Figure~\ref{fig: highly sol example} satisfies the following:
\begin{enumerate}[font=\upshape]
\item \label{item: main n-sol}$L(J_n)$ is $n$-solvable but not $(n.5)$-solve-equivalent to any boundary link.
\item \label{item: main Milnor}$L(J_n)$ is a homology boundary link with slice components.
\end{enumerate}

For the construction we use the  generalized  doubling operator  $R_{\eta_1, \eta_2}$ of Figure~\ref{fig: doubling}. As is checked in \cite[Example 3.3]{CHL09}, $R_{\eta_1, \eta_2}$ satisfies the hypotheses of Proposition~\ref{prop:CHL}.  Let $D$ be the constant in the proposition and let $J_1$ be a $0$-solvable knot with $|\rho_0(J_1)|>D$.  Such a knot can be obtained by taking a connected sum of a sufficiently large even number of trefoils. Let $J_n = R_{\eta}^{n-1}(J_1)$, then by Proposition~\ref{prop:CHL} and induction, $J_n$ is $(n-1)$-solvable. Hence, again by Proposition~\ref{prop: increase sol}, we have that $L(J_n)$ is $n$-solvable.  

Next, suppose for the sake of contradiction that $L(J_n) = L_1\cup L_2$ is $n.5$-solve-equivalent to a boundary link.   If we think of $(L_1)_{L_2}$ as a doubling operator then $(L_1)_{L_2}(J_n) = R_\eta(J_n) = R_{\eta}^{n}(J_1)$, which is not $n.5$-solvable by Proposition~\ref{prop:CHL}.  On the other hand, since $J_n$ is $(n-1)$-solvable, Corollary~\ref{cor: bdry link doubling op} concludes that $(L_1)_{L_2}(J_n)$ is $n.5$-solvable, a contradiction.

Finally, we verify \pref{item: main Milnor}.  It is straightforward to see that the components of $L(J_n)= L_1\cup L_2$ are slice.  Indeed, $L_2$ is the unknot.  To see a slice disk for $L_1$, notice that on the obvious Seifert surface $S$ for $L_1$ there is a nonseparating unknotted curve with zero on which the Seifert form vanishes.  Use a slice disk for this unknot to modify $S$ by ambient surgery.  What results is a slice disk for $L_1$.  In \cite[Section 3]{Cochran-Orr:1993-1}, it is shown that $L(J_n)$ is a homology boundary link.\end{proof}

\bibliographystyle{alpha}
\bibliography{biblio}
\end{document}